\documentclass[12pt]{amsart}
\usepackage[top=1in, bottom=1in, left=1in, right=1in]{geometry}
\usepackage{amsfonts}
\usepackage{amsmath}
\usepackage{comment}
\usepackage{amssymb}
\usepackage{bbm}
\usepackage{comment}
\usepackage{mathrsfs}
\numberwithin{equation}{section}
\usepackage{times}
\usepackage[backend=biber, sorting=nyt, url=false,
maxnames = 100,
doi=false]{biblatex}
\usepackage{siunitx}
\usepackage[usenames,dvipsnames]{color}
\usepackage{comment}
\usepackage{xcolor}
\usepackage{mathtools}
\usepackage{bm}
\usepackage{esvect}
\usepackage{hyperref}
\hypersetup{
    colorlinks = true,
linkcolor={black},
urlcolor={blue},
citecolor={blue},    
urlcolor = {blue},
citebordercolor = {0.33 .58 0.33},
 linkbordercolor = {0.99 .28 0.23},
 breaklinks=true}
 
\newcommand{\kommentar}[1]{}

\newcommand{\R}{\mathbb{R}}

\newcommand{\Z}{\mathbb{Z}}

\newtheorem{thm}{Theorem}[section]

\newtheorem{coro}[thm]{Corollary}
\newtheorem{lem}[thm]{Lemma}
\newtheorem{rem}[thm]{Remark}
\newtheorem{conj}[thm]{Conjecture}

\newcommand{\dd}{\;\mathrm{d}}

\theoremstyle{remark}

\usepackage{geometry}
\DeclarePairedDelimiter\floor{\lfloor}{\rfloor}

\title[Representations as Sums of Icosahedral and Dodecahedral
Numbers]{Representations as Sums of Icosahedral and Dodecahedral
Numbers: Proof of Pollock's Conjectures}

\author{Debmalya Basak, Anji Dong, Katerina Saettone and Alexandru Zaharescu}

\address{
Debmalya Basak: Department of Mathematics,
University of Illinois Urbana-Champaign,
Altgeld Hall, 1409 W. Green Street,
Urbana, IL, 61801, USA}
\email{dbasak2@illinois.edu}

\address{
Anji Dong: Department of Mathematics,
University of Illinois Urbana-Champaign,
Altgeld Hall, 1409 W. Green Street,
Urbana, IL, 61801, USA}
\email{anjid2@illinois.edu}

\address{
Katerina Saettone: Department of Mathematics,
University of Illinois Urbana-Champaign,
Altgeld Hall, 1409 W. Green Street,
Urbana, IL, 61801, USA}
\email{kas18@illinois.edu}

\address{
Alexandru Zaharescu: Department of Mathematics,
University of Illinois Urbana-Champaign,
Altgeld Hall, 1409 W. Green Street,
Urbana, IL, 61801, USA and Simion Stoilow Institute of Mathematics of the Romanian Academy, 
P. O. Box 1-764, RO-014700 Bucharest, Romania}
\email{zaharesc@illinois.edu}  
\begin{document}
\nocite{*}
\setcounter{tocdepth}{1}
\keywords{Pollock's conjectures, exponential sums, Hardy--Littlewood method, icosahedral numbers, dodecahedral numbers}
\subjclass{Primary: 11P05. Secondary: 11P55, 11L07, 11L15}
\maketitle
{\centering\footnotesize \textit{We dedicate the paper to George Andrews and Bruce Berndt on the occasion of their 85\textsuperscript{th} birthdays.}\par}

\begin{abstract}
On the occasion of George Andrews' and Bruce Berndt's combined 170\textsuperscript{th} birthday, we prove two conjectures of Sir Frederick Pollock that are more than 170 years old. In 1843, Pollock conjectured that every positive integer is the sum of at most 13 icosahedral numbers, or at most 21 dodecahedral numbers. We establish refined versions of both conjectures. 
\end{abstract}
\setcounter{tocdepth}{-1}
\section{Introduction}\label{sec: Introduction}
In 1843, Sir Frederick Pollock \cite{pollock} stated a number of conjectures that are extensions of the Fermat polygonal number theorem to other figurate numbers. Among other things, he proposed five conjectures on polyhedral numbers associated with the five Platonic solids. To be precise, he conjectured that every positive integer is the sum of at most 5 tetrahedral numbers, 7 octahedral numbers, 9 cubes, 13 icosahedral numbers, or 21 dodecahedral numbers. In the present paper, we solve Pollock's conjectures for icosahedral and dodecahedral numbers. Before turning to the historical background of Pollock’s conjectures and related topics, we begin by presenting our results concerning representations as sums of icosahedral and dodecahedral numbers. For $n \in \mathbb{N}$, let $I_n$ and $D_n$ denote the $n$-th icosahedral and dodecahedral numbers respectively, defined by
\begin{align}\label{Icosahedral Number : defn}
I_n &= \frac{n(5n^2-5n+2)}{2},\\
\textrm{and} \quad D_n &= \frac{n(3n-1)(3n-2)}{2}.\label{Dodecahedral Number : defn}
\end{align}

\begin{thm}\label{Theorem: Representation Icosahedral}
For any positive integer $m$, let $\mathcal{I}_9(m)$ denote the number of representations of $m$ as a sum of 9 icosahedral numbers. Then for all $m>e^{e^{94}}$,
\begin{align*}
\bigg|\mathcal{I}_9(m) -\frac{4}{125}\bigg(\Gamma\bigg(\frac{4}{3}\bigg)\bigg)^{9}\mathfrak{S}_{9,\mathcal{I}}(m) m^{2}\bigg|\leqslant 10^{12}m^{2-\frac{1}{108}},
\end{align*}
where $\Gamma(s)$ is the Gamma function, and the arithmetic factor $\mathfrak{S}_{9,\mathcal{I}}(m)$, defined by \eqref{Arithmetic Factor for s=9} below, satisfies the inequality
\begin{align*}
\mathfrak{S}_{9,\mathcal{I}}(m)&\geqslant \frac{1}{e^{ e^{92}}}.  
\end{align*}
\end{thm}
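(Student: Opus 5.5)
We prove the asymptotic by the Hardy--Littlewood circle method with every constant made explicit. Put $P=\lfloor (2m/5)^{1/3}\rfloor+1$, so that $I_n\leqslant m$ forces $n\leqslant P$, and set
\[
f(\alpha)=\sum_{n\leqslant P}e(\alpha I_n), \qquad \mathcal{I}_9(m)=\int_0^1 f(\alpha)^9e(-m\alpha)\dd\alpha .
\]
Take major arcs $\mathfrak{M}(q,a)=\{|\alpha-a/q|\leqslant P^{\nu-3}\}$ for $q\leqslant P^{\nu}$, with a small $\nu$ to be tuned. The exponent $1/108$ in the error term should come out of this tuning.

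\textbf{Major arcs.} On $\mathfrak{M}(q,a)$ write $\alpha=a/q+\beta$ and split $n$ into residue classes modulo $2q$; the denominator $2$ in $I_n$ forces modulus $2q$. Partial summation then gives
\[
f(\alpha)=S(q,a)\,v(\beta)+O\bigl(q(1+P^3|\beta|)\bigr),
\qquad S(q,a)=\frac{1}{2q}\sum_{r\bmod 2q}e\Bigl(\frac{aI_r}{q}\Bigr),
\qquad v(\beta)=\int_0^P e\Bigl(\frac{5}{2}\beta t^3\Bigr)\dd t .
\]
Lower-order terms of $I_t$ are absorbed into the error, with explicit constants throughout.
\begin{itemize}
\item Raise to the 9th power, integrate over $\beta$, and extend to $\beta\in\mathbb{R}$. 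The singular integral evaluates by the standard Beta-function computation to $\Gamma(4/3)^9\Gamma(3)^{-1}(5/2)^{-3}m^{2}$, which yields the main term $\frac{4}{125}\Gamma(4/3)^9 m^2$.
\item Completing the sum over $q$ to all $q$ gives $\mathfrak{S}_{9,\mathcal{I}}(m)=\sum_q A(q)$. The tail is controlled by $S(q,a)\ll q^{-1/3+\varepsilon}$, which follows from explicit Hua-type bounds for complete cubic sums. Since $9/3>2$, the tail is $O(P^{-\nu\delta})$ for some $\delta>0$.
\end{itemize}

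\textbf{Minor arcs.} We need
\[
\int_{\mathfrak{m}}|f|^9 \ll m^{2-1/108}.
\]
We use the two standard inputs. First, Hua's inequality $\int_0^1|f|^8\ll P^{5+\varepsilon}$, proved by Weyl differencing applied to the cubic polynomial $I_n$; the $\varepsilon$ must be replaced by explicit divisor-function bounds $d(n)\leqslant n^{c/\log\log n}$. Second, Weyl's inequality on $\mathfrak{m}$, $\sup_{\mathfrak m}|f|\ll P^{1-\nu/4+\varepsilon}$, obtained via Dirichlet approximation with $q\leqslant P^{3-\nu}$. Combining,
\[
\int_{\mathfrak{m}}|f|^9 \ll P^{6-\nu/4+2\varepsilon}.
\]
Choosing $\nu$ against the major-arc error $P^{6-3+2\nu}\cdot(\text{stuff})$ produces the saving $m^{-1/108}$.

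The double-exponential thresholds $e^{e^{94}}$ suggest where the real difficulty lies. The divisor-bound $\varepsilon$'s only become small for astronomically large $m$, and they appear in several places:
\begin{itemize}
\item making every $\varepsilon$-loss explicit;
\item keeping track of absolute constants so that $10^{12}$ suffices;
\item controlling the explicit $\log\log$ losses.
\end{itemize}

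\textbf{Singular series lower bound.} This is the step I expect to be the main obstacle. Multiplicativity gives $\mathfrak{S}=\prod_p\chi_p$, with $\chi_p=\sum_k A(p^k)$ equal to a local density of solutions of $\sum_{i=1}^9 I_{x_i}\equiv m \pmod{p^k}$.
\begin{itemize}
\item \emph{Large $p$.} Weil or Hua bounds give $|S(p^k,a)|\ll p^{-k/3}$, and for $p>7$ with $p\nmid 5$, $|S(p,a)|\leqslant 2p^{-1/2}$ since $I_n$ is a cubic in $n$. Hence $|\chi_p-1|\leqslant C p^{-5/2}$ for large $p$, and the product over such primes is bounded below.
\item \emph{Small $p$.} We must show $\chi_p>0$, with an explicit lower bound, for $p=2,3,5$ and the finitely many primes where the general estimate fails. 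For this we use Hensel lifting: exhibit a nonsingular solution modulo $p^{\gamma}$ with $9$ variables. For $p=5$ we have $I_n\equiv n \pmod 5$ up to structure; I would handle $p\mid 10$ by explicit residue computation showing that the $I_n$ cover enough residues that nine of them represent every class with some $I'_{x_i}\not\equiv0$.
\end{itemize}
Each $\chi_p$ then has a lower bound like $p^{-8\gamma}$. The product of these crude bounds, over primes up to where the $p^{-5/2}$ estimate kicks in (a potentially large explicit cutoff), gives a lower bound of the shape $e^{-e^{92}}$. Finally, $m>e^{e^{94}}$ ensures that the main term dominates.
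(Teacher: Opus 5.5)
\textbf{There is a genuine gap, and it sits at the step you already call ``the real difficulty''.} Your outline otherwise follows the paper's architecture step for step:
\begin{itemize}
\item the same major arcs $q\le N^{\delta}$, $|\alpha-a/q|\le N^{\delta-3}$;
\item on the minor arcs, Weyl's inequality combined with Hua's eighth-moment bound, with $\varepsilon$ replaced by a pointwise bound $d(n)\le n^{c/\log\log n}$ (the paper takes $c=1.0661$);
\item the approximation $f\approx\frac{V(q,a)}{2q}v(\theta)$ with modulus $2q$;
\item the Beta-integral evaluation giving $\frac{4}{125}\Gamma(4/3)^9m^2$;
\item multiplicativity plus Hensel lifting for the local factors.
\end{itemize}
Making the losses explicit does not close the gap: with a pointwise divisor bound the minor arcs are not controlled at $m\approx e^{e^{94}}$. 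There $\log\log N\approx 93$. The eighth moment then loses $N^{6.40/\log\log N}\approx N^{0.069}$, and Weyl's inequality loses a further $N^{0.006}$. Against this, Weyl saves only $N^{\delta/4}$ with $\delta<1/5$ (forced by the major-arc error $N^{5+5\delta}$), i.e.\ at most $N^{0.05}$. So your bound for $\int_{\mathfrak m}|f|^9$ exceeds the main term $\asymp N^{6}$. Obtaining $m^{-1/108}$ this way needs $\log\log m$ in the hundreds, and sharpening $c$ does not rescue it.

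The decisive loss is in the eighth moment. The missing ingredient is a mean-value version of it: an averaged rather than pointwise count of solutions of $\Delta_2(I_n;\mathbf h)=K$. The large-sieve argument of Remark \ref{Large Sieve Remark}, made explicit, would do; Remark \ref{Cauchy Schwarz Remark} does the same for Weyl. This costs only powers of $\log N$. You cannot take this step from the paper either: Theorem \ref{Minor Arcs : thm} uses the same pointwise bound, and for $s=9$ its exponent $2-\frac{\delta}{12}+\frac{6.93}{1.2\log\log m}$ exceeds $2$ at $\log\log m=94$.

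\textbf{Exponent bookkeeping.} Summing $qN^{8}\cdot N^{2\delta-3}$ over $a\bmod q$ and $q\le N^{\delta}$ gives a major-arc error $N^{5+5\delta}$, not $P^{3+2\nu}$. The paper's $1/108$ comes from balancing this against the tail $N^{6-\delta/7}$. That tail is produced by its Weyl-type bound $|V(q,a)|\le 47q^{16/21}$, and the balance gives $\delta=21/108$. With your $q^{-1/3}$ bound the tail is $N^{6-\delta}$, so the minor arcs become the bottleneck. Then $\delta/4=1-5\delta$ gives $m^{-1/63}$ before losses, which is stronger and so harmless.

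\textbf{Local factors: a genuinely different and better route.} You use Weil's bound $|S(p,a)|\le 2p^{-1/2}$ together with an explicit prime-power bound. That could be Hua-type constants, or the stationary-phase bound $2p^{k/2}$ for the complete sum mod $p^k$ when $p\ge7$, $k\ge2$. This gives a two-sided, summable estimate for $|\chi_p-1|$ beyond a modest cutoff, so Hensel lifting is needed only below it. There you should produce a nonsingular solution mod $p$ uniformly, by Cauchy--Davenport or the same Weil count, rather than case by case.

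The paper instead counts points on $g(x)+g(y)=c$ via Hasse--Weil. That yields only the one-sided, non-summable $T_m(p)\ge 1-2/\sqrt p-3/p$ of Lemma \ref{T_m(p) lower bound}. It must then fall back on Lemma \ref{T_m(p) complete bound}, $|T_m(p)-1|\ll e^{9e^{89}}p^{-8/7}$, beyond $z=(2e^{9e^{89}}+1)^{7/8}$. The bound of Lemma \ref{lower bound for S(m)} is consequently governed by $2^{-14z}$, far below $e^{-e^{92}}$. Your version is the one that actually yields a lower bound of the claimed size.
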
 
\begin{thm}\label{Theorem: Representation Dodecahedral}
For any positive integer $m$, let $\mathcal{D}_9(m)$ denote the number of representations of $m$ as a sum of 9 dodecahedral numbers. Then for all $m> e^{e^{94}}$, 
\begin{align*}
\bigg|\mathcal{D}_{9}(m) -\frac{4}{729}\bigg(\Gamma\bigg(\frac{4}{3}\bigg)\bigg)^{9}\mathfrak{S}_{9,\mathcal{D}}(m)m^{2}\bigg|\leqslant 
     10^{16}m^{2-\frac{1}{108}},
\end{align*}
where $\Gamma(s)$ is the Gamma function, and the arithmetic factor $\mathfrak{S}_{9,\mathcal{D}}(m)$, defined similarly as \eqref{Arithmetic Factor for s=9}, satisfies the inequality
\begin{align*}
     \mathfrak{S}_{9,\mathcal{D}}(m) \geqslant\frac{1}{e^{e^{92}} } .
\end{align*}
\end{thm}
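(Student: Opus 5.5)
The plan is to run an explicit Hardy--Littlewood circle method for the cubic polynomial $f(n)=I_n=\tfrac{5}{2}n^3-\tfrac52 n^2+n$ (and likewise $D_n=\tfrac{9}{2}n^3-\tfrac{9}{2}n^2+n$), keeping every constant explicit. Set $P=\lfloor (2m/5)^{1/3}\rfloor+1$ and $S(\alpha)=\sum_{n\leqslant P} e(\alpha I_n)$, so that
\begin{align*}
\mathcal{I}_9(m)=\int_0^1 S(\alpha)^9 e(-\alpha m)\dd\alpha .
\end{align*}
We dissect $[0,1)$ into major arcs $\mathfrak{M}(q,a)=\{|\alpha-a/q|\leqslant P^{\theta-3}\}$ with $q\leqslant P^{\theta}$, $(a,q)=1$, for a small $\theta$ (something like $\theta=1/100$, tuned at the end to produce the exponent $2-\tfrac{1}{108}$). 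The minor arcs $\mathfrak{m}$ are the complement.

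On the major arcs we write $\alpha=a/q+\beta$. Splitting $n$ into residue classes mod $2q$ (the denominator $2$ comes from $I_n$ having half-integer coefficients) gives
\begin{align*}
S(\alpha)=q^{-1}S(q,a)\,v(\beta)+O(q(1+P^3|\beta|)),
\end{align*}
where $S(q,a)=\sum_{r\bmod 2q}\tfrac12 e(aI_r/q)$ and $v(\beta)=\int_0^P e(\beta I_x)\dd x$. The error is made explicit via Euler--Maclaurin or partial summation. We then extend the $\beta$-integral to $\R$ and evaluate the singular integral
\begin{align*}
\int_\R v(\beta)^9 e(-\beta m)\dd\beta .
\end{align*}
Substituting $y=\tfrac52x^3$ up to lower-order terms, this produces $\Gamma(4/3)^9\Gamma(3)^{-1}(2/5)^{3}m^{2}$ with relative error $O(m^{-1/3})$, which gives the constant $\tfrac{4}{125}\Gamma(4/3)^9$; the $D_n$ case gives $\tfrac{4}{729}$. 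The truncated singular series is completed to $\mathfrak{S}_{9,\mathcal{I}}(m)$ using an explicit Hua-type bound $|S(q,a)|\ll q^{2/3+\varepsilon}$. This bound is proved multiplicatively, with the primes $2,3,5$ handled separately. Summing over $q>P^\theta$ then gives a tail of size $P^{-\theta(9/3-1-1)}$, up to $\varepsilon$ losses.

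On the minor arcs we use
\begin{align*}
\int_{\mathfrak m}|S|^9\leqslant \sup_{\mathfrak m}|S|\cdot\int_0^1|S|^8 .
\end{align*}
Hua's lemma for cubic polynomials gives $\int_0^1|S|^8\ll P^{5+\varepsilon}$; we prove it via two Weyl differencing steps plus a divisor-function count, with an explicit divisor bound such as $d(n)\leqslant n^{1.07/\log\log n}$. This is where constants like $e^{e^{94}}$ enter. Weyl's inequality with Dirichlet approximation gives $\sup_{\mathfrak m}|S|\ll P^{1-\theta/4+\varepsilon}$. Together these yield
\begin{align*}
\int_{\mathfrak m}|S|^9\ll P^{6-\theta/4+\varepsilon}=o(m^2),
\end{align*}
and balancing $\theta$ against the major-arc errors fixes the exponent $1/108$.

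The main obstacle is the lower bound $\mathfrak S_{9,\mathcal I}(m)\geqslant e^{-e^{92}}$. The plan is to write $\mathfrak S=\prod_p\chi_p(m)$ with $\chi_p=\lim_k p^{-8k}\#\{x\in(\Z/p^k)^9:\sum I_{x_i}\equiv m\}$. For $p\nmid 30$ and $p$ large, the Weil bound $|S(p,a)|\leqslant 2\sqrt p$ (with $|S(p^k,a)|=0$ or small for $k\geqslant2$ by explicit Hua estimates) gives $\chi_p\geqslant 1-Cp^{-3/2}$. For $p$ below an explicit threshold and for $p\in\{2,3,5\}$, we show $\chi_p\geqslant p^{-c}$ by Hensel lifting from a nonsingular solution mod $p$ (or mod $8,9,25$). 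Its existence follows from Cauchy--Davenport / Chevalley-type counting of values of $I_x$ mod $p$, where $I_x$ takes at least $\approx p/3$ residues. The $q$-power separation at $p=2,3,5$, where $I_n$ has nonstandard denominators and degenerate derivatives, is where I expect the most careful casework. The product of these explicit bounds, with a crude $\prod_{p\leqslant X}p^{-c}$ estimate, gives the doubly exponential constant. Theorem~\ref{Theorem: Representation Dodecahedral} follows by the identical argument with $5/2$ replaced by $9/2$ and worse explicit constants.
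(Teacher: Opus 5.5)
Your outline follows the same architecture as the paper. The paper proves Theorem~\ref{Theorem: Representation Dodecahedral} only by asserting that the icosahedral argument carries over, and that argument has:
\begin{itemize}
\item the same dissection;
\item minor arcs bounded by Weyl's inequality times an eighth moment, obtained from two differencing steps and the pointwise bound of Lemma~\ref{Explicit Divisor Bound};
\item the same $O(q(1+P^3|\beta|))$ major-arc approximation;
\item completion of the singular series and integral.
\end{itemize}
There are two minor differences. The paper evaluates the integral through the model sum $v_1$ and a Beta-function induction rather than by substitution. It also completes the series using only the Weyl bound $|V(q,a)|\leqslant 47q^{16/21}$; balancing that tail against the major-arc error is what produces $\delta=\frac{7}{36}$ and the exponent $\frac{1}{108}$.

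The genuinely different step is the lower bound for $\mathfrak S$. The paper fixes $s-2$ variables, counts points on the plane cubic $g(x)+g(y)=c$ by Hasse--Weil (Lemma~\ref{elliptic curve satisfied}), and lifts good solutions (Lemma~\ref{hensel's lemma lifting}). This yields only $T_m(p)\geqslant 1-2/\sqrt p-3/p$. Since $\sum_p p^{-1/2}=\infty$, these factors do not give a convergent product, so the paper must fall back on Lemma~\ref{T_m(p) complete bound} for large $p$. That lemma's constant $e^{se^{89}}$ forces the cut-off $z=(2e^{9e^{89}}+1)^{7/8}$ in Lemma~\ref{lower bound for S(m)} and a final bound of the shape $2^{-14z}$.

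Your use of Weil's bound $|S(p,a)|\leqslant 2\sqrt p$ for the one-variable complete sum is much more efficient. Combined with the same Hensel lifting, you do not even need the prime-power sums: for every prime $p\geqslant 7$ it gives $\chi_p=T_m(p)\geqslant 1-512p^{-7/2}-512p^{-8}>0$. Cauchy--Davenport plus Hensel (working mod $4$ or $8$ at $p=2$) then handles the remaining primes. So your route produces an absolute explicit lower bound for $\mathfrak S$, comfortably above $e^{-e^{92}}$.

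\textbf{The gap.} The step that fails as written is your claim that the pointwise divisor bound is what makes the threshold $e^{e^{94}}$ work. Near $m=e^{e^{94}}$ one has $\log\log P\approx 93$, so the pointwise bound costs about:
\begin{itemize}
\item $P^{3.2/\log\log P}\approx P^{0.034}$ in the fourth moment;
\item $P^{6.4/\log\log P}\approx P^{0.069}$ in the eighth moment;
\item $P^{0.006}$ in Weyl's inequality.
\end{itemize}
The minor-arc saving is only $P^{-\theta/4}$. So you need $\theta\geqslant\frac19$ plus these losses just to reach $P^{6-1/36}\asymp m^{2-1/108}$; your $\theta=\frac1{100}$ is far too small. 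Meanwhile the major-arc error $P^{5+5\theta}$ caps $\theta$ at $\frac{7}{36}$. Hence the saving is at most $P^{-7/144}\approx P^{-0.049}$, and your bound for $\int_{\mathfrak m}|S|^9$ is worse than $P^6$. No choice of $\theta$ works until $\log\log m$ is roughly $330$. The explicit bound of Theorem~\ref{Minor Arcs : thm} has the same defect at this threshold.

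\textbf{The fix.} What is missing is an averaged divisor estimate in both steps of Lemma~\ref{Inductive Lemma}, losing only powers of $\log P$, in the spirit of Remarks~\ref{Cauchy Schwarz Remark} and~\ref{Large Sieve Remark}.
\begin{itemize}
\item For the off-diagonal eighth-moment terms, write the count as $\sum_{K\neq 0}r(K)\tau(K)$. Here $r(K)$ counts $(u_1,u_2,v_1,v_2)$ with $\sum_i(I_{u_i}-I_{v_i})=K$, and $\tau(K)\ll d_3(|K|)$ counts $(h_1,h_2,n)$ with $\Delta_2(I_n;\mathbf h)=K$. Cauchy--Schwarz with $\sum_K r(K)^2=\int_0^1|S|^8$ and $\sum_{|K|\ll P^3}d_3(|K|)^2\ll P^3(\log P)^8$ bounds that part by $O(P^5(\log P)^8)$.
\item The fourth moment enters through the terms with $\Delta_2=0$, multiplied by $P^3$. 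It needs an averaged count of divisors of $I_x-I_y$ below $P$ (or the large sieve).
\end{itemize}
With $\int_0^1|S|^8\ll P^5(\log P)^c$, the choice $\theta=\frac{7}{36}$ leaves room for the remaining $P^{0.006}$, and your plan goes through.
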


To put Theorems \ref{Theorem: Representation Icosahedral} and \ref{Theorem: Representation Dodecahedral} into context, we now state Pollock's conjectures. 

\begin{conj}\label{Pollock's Conjecture Icosahedral}
Every positive integer is the sum of at most 13 icosahedral numbers.
\end{conj}
\begin{conj}\label{Pollock's Conjecture Dodecahedral}
Every positive integer is the sum of at most 21 dodecahedral numbers.
\end{conj}

However, Conjectures \ref{Pollock's Conjecture Icosahedral} and \ref{Pollock's Conjecture Dodecahedral}, as stated, are false. One can check that the number 95 can be written as a sum of 15 icosahedral numbers, 
\begin{align}\label{95}
95=48+12+12+12+1+1+1+1+1+1+1+1+1+1+1,
\end{align}
but cannot be written as a sum of at most 14 icosahedral numbers. Meanwhile, the number $79$ can be expressed as the sum of 22 dodecahedral numbers,
\begin{align}
    79=20+20+20+1+1+\cdots+1+1,\label{79}
\end{align}
where there are 19 occurrences of the number $1$ in the sum. But $79$ cannot be written as a sum of at most 21 dodecahedral numbers. Accordingly, a slight refinement of Conjectures \ref{Pollock's Conjecture Icosahedral} and \ref{Pollock's Conjecture Dodecahedral} is required before they can be rigorously proven. We carry out this refinement in our work and present the strongest possible results in this direction. As a first step, building upon Linnik’s method, we establish the following theorems concerning representations as sums of eight icosahedral and dodecahedral numbers, respectively.

\begin{thm}\label{Theorem: 8 Icosahedral Numbers}
    Any $m \in \mathbb{N}$ with $m\geqslant 10^{36}$ is a sum of 8 icosahedral numbers.
\end{thm}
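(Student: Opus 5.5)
The plan is an explicit elementary argument in the style of Watson's proof that every large integer is a sum of eight cubes. Everything rests on a pairing identity for icosahedral numbers. From \eqref{Icosahedral Number : defn}, a direct expansion gives
\begin{align*}
I_{c+d}+I_{c-d} = 2I_c + (15c-5)\,d^2 = 5c^3-5c^2+2c+5(3c-1)\,d^2 .
\end{align*}
Using three such pairs with a common centre $c$ and $0\leqslant d_i<c$ (so that all six indices $c\pm d_i$ are positive integers), we get
\begin{align*}
\sum_{i=1}^{3}\big(I_{c+d_i}+I_{c-d_i}\big) = 6I_c + 5(3c-1)\big(d_1^2+d_2^2+d_3^2\big).
\end{align*}
It therefore suffices to find $c$ and two further indices $a,b\geqslant 1$ such that
\begin{align*}
N:=\frac{m-I_a-I_b-6I_c}{5(3c-1)}
\end{align*}
is a nonnegative integer, is not of the form $4^k(8l+7)$, and satisfies $N<c^2$. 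Legendre's three-square theorem then writes $N=d_1^2+d_2^2+d_3^2$, and automatically $d_i\leqslant\sqrt N<c$, giving eight icosahedral numbers.

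I would carry this out in the following steps.
\begin{enumerate}
\item \textbf{Choosing $c$.} Take $c$ with $p:=3c-1$ prime, in a short interval around $c_0=(m/K)^{1/3}$, where $K$ is a constant to be fixed. For $N<c^2$ we need $m-I_a-I_b-6I_c$ to lie between $0$ and $15c^3$ roughly, while $6I_c\approx15c^3$. So $K$ lies between $15$ and $30$, and $I_a+I_b$ must absorb the rest. Existence of such a prime will come from an explicit prime number theorem for the progression $2 \bmod 3$ (e.g.\ a Bennett--Martin--O'Bryant--Rechnitzer type bound), valid once $m\geqslant10^{36}$.
\item \textbf{Congruence modulo $p$.} Apply the same pairing identity to the remaining pair, writing $a=u+v$, $b=u-v$, so that $I_a+I_b=2I_u+5(3u-1)v^2$. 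The condition modulo $p$ becomes $v^2\equiv (m-6I_c-2I_u)\,(5(3u-1))^{-1} \pmod p$. Letting $u$ run over a short range, the right-hand side is a cubic polynomial in $u$ times an inverse, and I would show it is a quadratic residue for some small $u$. This can be done either elementarily or with an explicit Burgess/P\'olya--Vinogradov bound. Then $v$ can be taken in $[0,p/2]$.
\item \textbf{Local conditions at $5$ and $2$.} The divisibility by $5$ and the condition avoiding $4^k(8l+7)$ are imposed by prescribing $a,b,c,u$ modulo $5$ and modulo a power of $2$, using CRT together with the progression chosen for $p$.
\item \textbf{Size check.} Verify $0\leqslant N<c^2$ and $d_i<c$, with positivity of $c-d_i$ and $u-v$. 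If $u\geqslant v$ fails, use the alternative parametrization $a,b$ directly.
\end{enumerate}

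The main obstacle is the size balancing. Since $v$ can be as large as $p/2\approx 3c/2$, $I_a+I_b$ is of order $c^3$, which is the same order as $m-6I_c$. Every estimate must therefore be explicit with careful constants, and $K$ must be chosen so that $N$ falls in $[0,c^2)$ for all admissible choices. If this fails, my fallback is to take $c$ a product of two primes of controlled size, or to replace $p$ by a smaller prime modulus $q\mid 3c-1$ with $q\approx c^{1/2}$, restricting $a,b$ to the cruder range $O(q)$. That makes $I_a+I_b=O(c^{3/2})$ negligible, at the cost of an extra divisibility condition on $c$, which is again handled by primes in progressions. The explicit threshold $10^{36}$ would come from the constants in these prime-counting and character-sum bounds.
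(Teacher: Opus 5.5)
Your reduction is correct. The identity $I_{c+d}+I_{c-d}=2I_c+5(3c-1)d^2$ holds, and with Legendre's theorem it suffices to make $N=(m-I_a-I_b-6I_c)/(5(3c-1))$ an integer in $[0,c^2)$ not of the form $4^k(8l+7)$. The paper pairs $g(x+a+1)+g(x-a)$ instead, which produces triangular numbers with multiplier $5(6x+1)$, so Gauss's theorem applies and no $2$-adic condition is needed. The genuine gap is in Steps 2 and 4. Showing that $(m-6I_c-2I_u)(5(3u-1))^{-1}$ is a quadratic residue for some small $u$ only determines $v$ modulo $p$, and its representative in $[0,p/2]$ is uncontrolled. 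The condition $0\leqslant N<c^2$ confines $I_a+I_b$ to a window of length $5(3c-1)c^2\approx\tfrac{5}{9}p^3$. For fixed $u$, however, the term $5(3u-1)v^2$ alone sweeps an interval of length about $\tfrac{15}{4}(u/p)\,p^3$ as $v$ runs over $[0,p/2]$. If $u$ is small, then $b=u-v\leqslant 0$ unless $v<u$, which you cannot arrange. If you take $u\geqslant p/2$ to force $u>v$, the spread is at least $\tfrac{15}{8}p^3$, more than three times the window. Swapping the roles of $u$ and $v$ changes the congruence, and ``using $a,b$ directly'' just restates the problem. So no choice of $K$ works ``for all admissible choices''. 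You must produce a solution with $v$ (equivalently both $a$ and $b$) in a prescribed subinterval whose length is a fixed fraction of $p$.

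That is a genuinely two-dimensional equidistribution problem. The curve $g(x_1)+g(x_2)\equiv L \pmod p$, generically of genus one, must have a point with both coordinates in a box of side $\asymp\varepsilon p$, plus congruence conditions. A Burgess or P\'olya--Vinogradov bound for a one-variable Legendre-symbol sum in $u$ cannot see where the square root lies. Even in your $(u,v)$ coordinates, counting points with $u$ in an interval of length $H$ and $v$ in an interval of length $\lambda p$ gives a main term $\approx\lambda H$. The error is controlled only at the level $\sqrt p\log^2p$, via Weil--Bombieri bounds for exponential sums along the curve. So $u$ cannot be small, and a Weil-type bound for curves is indispensable. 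This is exactly what the paper supplies in Lemma~\ref{ZC theorem}, an explicit Cobeli--Zaharescu count of Lehmer points. That count is $\tfrac{4g_1g_2}{25}(p+1)+\mathcal{E}$ with $|\mathcal{E}|\leqslant 6.0003\sqrt p\log^2 p$. It uses Hasse--Weil for $|\mathcal{C}|$ and Bombieri's bound for $\sum_{\mathbf{x}\in\mathcal{C}}e_p(z_1x_1+z_2x_2)$. Combined with Ramar\'e--Rumely for primes $p\equiv 1\pmod 6$ in $[1.54m^{1/3},1.62m^{1/3}]$, this gives the threshold. Your fallback does not avoid the issue. Integrality of $N$ requires divisibility by all of $5(3c-1)$, so solving the congruence modulo a proper divisor $q$ leaves a congruence modulo the cofactor. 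With $a,b=O(q)$ there is no guaranteed way to satisfy it.
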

\begin{thm}\label{Theorem: 8 Dodecahedral Numbers}
Any $m \in \mathbb{N}$ with $m\geqslant 10^{39}$ is a sum of 8 dodecahedral numbers.
\end{thm}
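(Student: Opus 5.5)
The plan is a Linnik--Watson style argument, as in Watson's proof that large integers are sums of seven cubes, adapted to the cubic polynomial defining $D_n$. The first step is to change variables. Since $(3n-1)(3n-2)=9n^2-9n+2$, setting $x=3n-1$ gives
\begin{align*}
6D_n = x^3-x, \qquad x\equiv 2 \pmod 3,\ x\geqslant 2 .
\end{align*}
In other words $D_n=\binom{3n}{3}$, a tetrahedral number. The statement becomes: $6m=\sum_{i=1}^8 (x_i^3-x_i)$ with positive $x_i\equiv 2 \pmod 3$.

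The key algebraic identity is
\begin{align*}
(z+y)^3-(z+y)+(z-y)^3-(z-y)=2(z^3-z)+6zy^2 .
\end{align*}
The conditions $z\pm y\equiv 2 \pmod 3$ force $z\equiv 2$ and $y\equiv 0 \pmod 3$, and positivity needs $|y|<z$. Pairing six of the variables as $z\pm y_j$ ($j=1,2,3$) with $y_j=3u_j$ gives
\begin{align*}
6m-(x_7^3-x_7)-(x_8^3-x_8)-6(z^3-z)=54z\,(u_1^2+u_2^2+u_3^2).
\end{align*}
So it suffices to find $z$, $x_7$, $x_8$ such that the left side is divisible by $54z$, and the quotient $N$ satisfies two conditions:
\begin{itemize}
\item $N$ is not of the form $4^a(8b+7)$, so that by Gauss--Legendre it is a sum of three squares;
\item $N<z^2/9$, which guarantees $3|u_j|<z$.
\end{itemize}

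Concretely, take $z=p$ a prime with $p\equiv 2 \pmod 3$ and $p\asymp c\,m^{1/3}$, where $c$ is chosen so that $6p^3$ is a fixed proportion of $6m$. By an explicit prime number theorem in progressions modulo $3$ (or modulo $72$, if extra local control is needed), such a $p$ exists in a short interval once $m\geqslant 10^{39}$.

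Next, handle divisibility by $p$. The map $x\mapsto x^3-x$ on $\mathbb{Z}/p$ takes at least about $p/3$ values, in fact roughly $2p/3$. I would rather use Cauchy--Davenport on the sumset of the value sets of $x_7$ and $x_8$, taken with suitable shifts or restricted residue classes, to show that every residue mod $p$ is attained. This fixes $x_7$ and $x_8$ modulo $p$. Independently, I would fix their residues modulo $54\cdot 8$ (or modulo $2^k\cdot 27$) so that:
\begin{itemize}
\item the left side is divisible by $54$;
\item the quotient $N$ lies in a residue class mod $8$ avoiding $7$ and avoiding divisibility by $4$.
\end{itemize}
The residues of $x\mapsto x^3-x$ modulo powers of $2$ and $3$ are explicitly computable, so this is a finite check. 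By CRT we then get arithmetic progressions of modulus about $432p$ for $x_7$ and $x_8$.

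Finally comes the size balancing. I choose $x_8$ small, of order $p$, and choose $x_7$ in its progression so that $x_7^3$ makes the left side land in the window $(0,\,54p\cdot p^2/9)=(0,6p^3)$. Consecutive admissible $x_7$ differ by $O(p)$, so $x_7^3$ moves in steps of $O(p\,x_7^2)=O(p\,m^{2/3})$, which is $O(m)$. This is the delicate point: it is comparable to the window $6p^3\asymp m$, not smaller. The constants therefore have to be tuned by taking $p$ somewhat larger relative to $x_7$, or by using $x_8$ as well, so that some term of the progression lands inside the window with positive margin.

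This explicit balancing is the main obstacle. It requires making the constant $c$, the explicit prime-in-short-interval estimate, and the CRT modulus all consistent, and it is what produces the threshold $10^{39}$. I would handle it first with a crude choice, say $p\approx (m/2)^{1/3}$ and $x_7\approx (3m)^{1/3}$, and verify the inequalities numerically.
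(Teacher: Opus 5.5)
Your reduction is correct as far as it goes. The identity $6D_n=x^3-x$ with $x=3n-1$, the pairing identity, the forced congruences $z\equiv 2$, $y\equiv 0\pmod 3$, and the requirement $N<z^2/9$ (which guarantees $3|u_j|<z$ for \emph{every} three-square representation) are all fine. The gap is the final size-balancing step, and the problem there is structural, not a matter of tuning constants.

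Cauchy--Davenport, or a Hasse--Weil point count, only tells you that $(x_7^3-x_7)+(x_8^3-x_8)\equiv 6m\pmod p$ is solvable. It says nothing about \emph{where} in $[0,p)$ the residues of $x_7,x_8$ lie. After you adjoin the local conditions by CRT, the least admissible $x_7,x_8$ can lie anywhere in $[0,Kp)$. Here $K\geqslant 3$ already because $x_i\equiv 2\pmod 3$, and $K$ is about $432$ with the local conditions you propose. So $(x_7^3-x_7)+(x_8^3-x_8)$ is uncontrolled in a range of length about $2K^3p^3\geqslant 54p^3$, while it has to land in a window of length $6p^3$. Both lengths scale like $p^3$, so no choice of $c$ changes the ratio. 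Going further out does not help either: the next admissible $x$ changes $x^3$ by about $3Kp\,x^2$, which exceeds the whole window once $x\gtrsim p\sqrt{2/K}$. Heuristically, averaging over the unknown residues, the window contains only $O(K^{-2})$ admissible pairs $(x_7,x_8)$ wherever it is placed, so using $x_8$ as a second progression variable does not rescue the argument. Your crude choice $p\approx(m/2)^{1/3}$, for example, needs $x_7^3+x_8^3\lesssim 6p^3$, i.e.\ both least representatives below about $1.8p$, and nothing in the argument provides that.

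The missing ingredient is an equidistribution statement for the $\mathbb{F}_p$-points of the curve $(x_7^3-x_7)+(x_8^3-x_8)=c$ in boxes of side $\asymp p$, with the auxiliary congruences built in. This is exactly the paper's key lemma (Lemma~\ref{ZC theorem dode}), stated there for $g(x_1)+g(x_2)=L$. Following Cobeli--Zaharescu, one expands the indicator functions of the intervals-with-congruence-conditions in additive characters modulo $p$. The resulting exponential sums along the cubic curve are bounded by Bombieri's theorem ($|S|\leqslant 6\sqrt p+9$). This gives $\frac{4g_1g_2}{81}(p+1)+O(\sqrt p\log^2 p)$ points with $x_1\equiv 0$, $x_2\equiv L\pmod 9$ and $x_j\in((t_j-g_j)p,(t_j+g_j)p)$. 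Both free variables then sit in a box at a location you choose, so their contribution fits the window. Balancing main term against error forces $p$ of size about $1.5\cdot 10^{13}$, hence the threshold; Ramar\'e--Rumely then supplies $p\equiv 1\pmod 6$ in the required short interval.

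The paper also centres its pairs at half-integers ($x+a+1$ and $x-a$, with $54x+9=9p$). The quotient is then a sum of three \emph{triangular} numbers, and Gauss's theorem applies with no local obstruction. Your integer-centred pairing yields squares and needs extra $2$-adic conditions, which would have to be folded into the box count. With such a lemma your parametrisation would go through.

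One caution if you borrow the paper's numerics. Its Condition~1 lets $L-g(\ell)-g(r)$ go up to $81x^3\approx(54x+9)\cdot 3\cdot\frac{x(x+1)}{2}$. A number near $3\cdot\frac{x(x+1)}{2}$ need not be a sum of three triangular numbers each below $\frac{x(x+1)}{2}$, which is what $a,b,c<x$ requires. Your condition $N<z^2/9$ is the right kind of safe bound, and the box must be small enough that the free variables' contribution varies by less than that safe window.
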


Finally, we prove corrected versions of Conjectures \ref{Pollock's Conjecture Icosahedral} and \ref{Pollock's Conjecture Dodecahedral}, which follow as corollaries of Theorems \ref{Theorem: 8 Icosahedral Numbers} and \ref{Theorem: 8 Dodecahedral Numbers}, along with some additional reductions and computational techniques.

\begin{thm}\label{Theorem: Pollock Icosahedral}
Any positive integer can be written as a sum of at most 15 icosahedral numbers. Moreover, 15 is the smallest positive integer with this property. 
\end{thm}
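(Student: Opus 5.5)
The plan has two parts: optimality, which is handled by the example \eqref{95}, and sufficiency, which splits into large $m$ (Theorem \ref{Theorem: 8 Icosahedral Numbers}) and a finite range checked by computer.

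\textbf{Optimality.} The icosahedral numbers below $95$ are $1,12,48$ (the next one is $I_4=124$). If $95=48a+12b+c$ with $a,b,c\geqslant 0$, the number of summands is $a+b+c$, so we minimise $a+b+c$.
\begin{itemize}
\item If $a=1$, then $47=12b+c$, and the minimum is $b=3$, $c=11$, giving $15$ summands.
\item If $a=0$, then $95=12b+c$, and the minimum is $b=7$, $c=11$, giving $18$ summands.
\end{itemize}
Hence $95$ needs exactly $15$ summands, so no bound below $15$ can hold for all positive integers.

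\textbf{Large $m$.} By Theorem \ref{Theorem: 8 Icosahedral Numbers}, every $m\geqslant 10^{36}$ is a sum of $8$ icosahedral numbers, and $8\leqslant 15$.

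\textbf{The range $m<10^{36}$.} This is far too large to enumerate directly, so I would use a greedy descent plus a certified finite computation.
\begin{itemize}
\item \emph{Computation.} By dynamic programming (a min-plus recursion over the icosahedral numbers up to $N_0$), compute for every $m\leqslant N_0$ the minimal number of summands. Here $N_0$ is something like $10^{7}$--$10^{9}$, or larger with a bitset approach. Expect to find that every $m\leqslant N_0$ needs at most $15$, and moreover that every $m$ in a tail window $[N_1,N_0]$ needs at most some small $k$, say $k\leqslant 7$ or $8$.
\item \emph{Gaps.} Consecutive icosahedral numbers satisfy $I_{n+1}-I_n=15n^2/2+O(n)$, so $I_{n+1}-I_n\asymp I_n^{2/3}$.
\item \emph{Descent.} Given $m$ with $N_0<m<10^{36}$, subtract the largest $I_n\leqslant m$. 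The remainder is $O(m^{2/3})$.
\item \emph{Landing in the window.} To control the remainder precisely, in each step subtract $I_n$ for $n$ chosen among the few largest admissible indices, so that the remainder lands in a prescribed interval rather than merely below a bound. This works because the values $m-I_n$ for consecutive $n$ near the top have spacing about $m^{2/3}$, which is smaller than the target window once that window is taken as $[N_1,N_0]$ with $N_0-N_1\gg N_0^{2/3}$.
\item \emph{Step count.} Starting from $10^{36}$, about three or four reductions of type $x\mapsto x^{2/3}$ bring the remainder into $[N_1,N_0]$. The total is therefore (number of greedy steps) $+\,k\leqslant 15$.
\end{itemize}

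An alternative is a Lagrange-style chaining argument: show that every integer in a long interval $[A,B]$ is a sum of at most $k$ icosahedral numbers, then extend to $[A,B+I_n]$ at the cost of one more summand. I would still prefer the descent, whose step count is explicit.

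\textbf{Main obstacle.} The hard step is bridging $N_0$ up to $10^{36}$ with few enough summands. The budget is tight: $15$ minus the steps in the descent must still cover the worst case in the window. I would first verify computationally that beyond some modest threshold every integer needs at most $5$ or $6$ icosahedral numbers, possibly excepting an explicit list of exceptions whose residues the descent can avoid. With such a bound, four descent steps still fit within $15$. If that bound fails, I would strengthen the landing condition using the quadratic spacing of $I_n$ to save a step.
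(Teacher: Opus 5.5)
Your proposal follows the same route as the paper: the example $95$ for optimality, Theorem~\ref{Theorem: 8 Icosahedral Numbers} for $m\geqslant 10^{36}$, and a computer verification below $10^{36}$, which the paper only cites and for which your descent is a natural way to organise the reduction. One correction to the landing step: you should compare the window with the gap $I_{n+1}-I_n\approx 4m^{2/3}$ at the current $m$, not with $N_0^{2/3}$, so the clean implementation is to subtract the largest $I_n\leqslant m-N_1$ at every step; this keeps each remainder $\geqslant N_1$ and reaches $[N_1,N_0]$ within four or five steps for $N_0\approx 10^{8}$--$10^{9}$, so it suffices that every integer in $[N_1,N_0]$ is a sum of at most $10$ icosahedral numbers.
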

\begin{thm}\label{Theorem: Pollock Dodecahedral}
Any positive integer can be written as a sum of at most 22 dodecahedral numbers.  Moreover, 22 is the smallest positive integer with this property.
\end{thm}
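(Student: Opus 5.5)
The plan is to combine Theorem \ref{Theorem: 8 Dodecahedral Numbers} with a finite computation below $10^{39}$. The lower bound is already in hand: by \eqref{79}, $79$ needs $22$ dodecahedral numbers. The dodecahedral numbers below $79$ are $1,20,56$, and a direct check shows that $22$ is the minimum. Writing $79=56a+20b+c$, the choice $a=1$ forces $23=20+3$, costing $1+1+3=5$ summands. That is impossible, hmm: so I would recheck this case. Indeed $79-56=23=20+1+1+1$, which gives $1+1+3=5$ summands. This contradicts the paper's claim, so I would first recompute $D_n$ from \eqref{Dodecahedral Number : defn}. It gives $D_1=1$, $D_2=2\cdot5\cdot4/2=20$ and $D_3=3\cdot8\cdot7/2=84>79$. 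So $56$ is not dodecahedral; it is the tetrahedral-type value, and the correct list below $79$ is just $\{1,20\}$. Then $79=20b+c$ with $b\le3$, so the count $b+c=79-19b$ is minimized at $b=3$, giving $22$. This settles minimality.

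For the upper bound, every $m\ge10^{39}$ is a sum of $8\le22$ dodecahedral numbers by Theorem \ref{Theorem: 8 Dodecahedral Numbers}. The remaining range $m<10^{39}$ is far too large for direct enumeration, so I would use a descent argument. For $m$ in a range $[A,B)$, the greedy step is to subtract a suitable $D_n$ close to $m$. Since $D_{n+1}-D_n\approx\frac{27}{2}n^2\asymp m^{2/3}$, there is some $D_n$ with $m-D_n\in[0,Cm^{2/3}]$. Iterating this three or four times reduces $10^{39}$ to a bound of size around $10^{39\cdot(2/3)^k}$, at the cost of one summand per step.

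This crude descent wastes summands, so I would refine it with a more careful scheme, in the style of Deshouillers–Hennecart–Landreau for cubes. The idea is to build, by computer, a set $S$ of integers up to some moderate $N$ (perhaps $10^{7}$–$10^{9}$) that are sums of at most $k$ dodecahedral numbers. The key check is that the gaps in $S$ (or in $S$ restricted to a residue class) are smaller than $D_{n+1}-D_n$ on the relevant scales. A chaining argument then covers $[N,10^{39})$ using $k$ plus a few extra summands, giving a total of at most $22$. For $m\le N$ I would run a dynamic programming computation of the minimal number of dodecahedral summands for all $m\le N$, and confirm that the maximum is $22$, attained at $79$ and possibly a few others.

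The main obstacle is the intermediate range. Descending from $10^{39}$ to a computable range without exceeding $22$ summands needs careful bookkeeping, because each greedy step costs one summand and the leftover must land in a set already certified to need few summands. I would try first to show computationally that every integer in a long interval $[N_0,N]$ is a sum of at most $c$ dodecahedral numbers, with $c$ small (say $9$–$11$). One then argues that for $m\in[N,10^{39})$, a few subtractions of well-chosen $D_n$ land $m$ in $[N_0,N]$. This works because intervals of length exceeding the dodecahedral gap always contain a suitable target, and keeps the total well under $22$.
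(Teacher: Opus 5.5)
Your proposal is correct and follows the paper's route. It uses Theorem~\ref{Theorem: 8 Dodecahedral Numbers} for $m\geqslant 10^{39}$ and a computer verification below that threshold. The paper leaves that verification to its accompanying code; your interval-bootstrapping scheme makes it feasible, since about five levels of adding one dodecahedral number climb from a certified interval ending near $10^{9}$ past $10^{39}$. For minimality you use the fact that only $D_1=1$ and $D_2=20$ lie below $79$, so $79=20b+c$ with $b\leqslant 3$ needs $79-19b\geqslant 22$ summands.

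Delete the false start about $56$. Also, the condition you need is that the certified interval be longer than $D_{n+1}-D_n=\frac{27n^2+9n+2}{2}$ at the next scale, as your last paragraph correctly says. It is not that the gaps in $S$ be smaller than $D_{n+1}-D_n$.
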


In fact, the only counterexamples to Pollock's original conjecture for icosahedral numbers are the following. The numbers 47, 83, 94, and 119 need 14 terms in their representations as sums of icosahedral numbers, and the number 95 requires 15 terms as seen in \eqref{95}. The only counterexample to Pollock's original conjecture for dodecahedral numbers is 79, shown in \eqref{79}, which requires 22 terms in its representations as sums of dodecahedral numbers.

Before proceeding to the proofs, we give a brief survey of previous results and related methods to put our results into context. We first mention that Pollock's conjecture for cubes, already proposed by Waring in 1770, was confirmed by Wieferich \cite{Wieferich1908BeweisDS} and Kempner \cite{Kempner1912BemerkungenZW} independently between 1909 and 1912. More recently, in 2016, Brady \cite{Brady_2015} proved Pollock's octahedral number conjecture for all integers greater than $e^{10^7}$. In the direction of Pollock's conjectures on icosahedral and dodecahedral numbers, Theorems \ref{Theorem: Representation Icosahedral} and \ref{Theorem: Representation Dodecahedral} establish asymptotic formulas for the number of ways a positive integer $m$ can be represented as sums of icosahedral and dodecahedral numbers respectively, accompanied by explicit power-saving error terms. We put particular emphasis on minimizing the bound for $m$ above for which our asymptotics remain valid. To prove our results, we rely on a combination of analytic techniques, algebraic approaches, and computational methods. We successfully achieve the desired saving, and moreover do it in a concrete, explicit way as required, in Section \ref{sec: Major Arcs II} below. We actually show that the arithmetic factor is uniformly bounded below by $1/e^{e^{92}}$. A similar approach works for dodecahedral numbers. Hence, we only state the result in Theorem \ref{Theorem: Representation Dodecahedral}, with full details to be provided in one of the authors' theses. 

In Section \ref{sec: Proof of Main Theorem}, we establish more general and sharper versions of Theorem \ref{Theorem: Representation Icosahedral} by considering sums of \(s\) icosahedral numbers for any \(s \geqslant 9\). Unfortunately, Theorems \ref{Theorem: Representation Icosahedral} and \ref{Theorem: Representation Dodecahedral}, along with their general versions, do not help us in fully proving Pollock's conjectures. Due to current computational limitations, it is impossible to verify all values of \(m\) up to the lower bound thresholds set by Theorems \ref{Theorem: Representation Icosahedral} and \ref{Theorem: Representation Dodecahedral}. Consequently, to establish the conjectures in their entirety, we turn to alternative approaches to bridge this gap.

Historically, other methods have provided effective bounds for related problems. Hua \cite{hua1936, huagenwaring,hua1940} showed that any sufficiently large integer can be expressed as a sum of at most 8 cubic polynomials of the form 
\[
f(x) = \frac{1}{6}a(x^3-x) + \frac{1}{2}b(x^2-x) + cx + d,
\]
where \(a\), \(b\), \(c\), and \(d\) are integers with \((a,b,c)=1\) and \(a > 0\). However, Hua’s method leads to bounds for the arithmetic factor similar to those in Theorems \ref{Theorem: Representation Icosahedral} and \ref{Theorem: Representation Dodecahedral}, resulting in thresholds as large as \(e^{e^{94}}\). On the other hand, Linnik’s method \cite{linnik1943representation} of expressing all large numbers as sums of seven cubes has led to the complete solution of that problem (see Maillet \cite{maillet1895decomposition}, Watson \cite{watsoncube}, Romani \cite{romani1982computations}, McCurley \cite{mccurley1984effective}, Deshouillers--Hennecart--Landreau \cite{deshouillers20007373170279850}, Ramaré \cite{ramare2005explicit, ramar2007explicit}, Boklan--Elkies \cite{boklan2009every}, Elkies \cite{elkies2010every}, Siksek \cite{siksek2016every}). But in other contexts involving cubic polynomials, the method still leaves bounds that are too large for complete numerical verification. For example, as mentioned above, in the case of Pollock's octahedral number conjecture, Brady \cite{Brady_2015} solved the problem for all numbers larger than the bound $e^{10^{7}}$, which is well beyond current computational limits.

Keeping in mind the above, we shift our attention to sums involving eight terms. To this end, we introduce a novel approach. We start with Linnik's method for sums of seven terms and, in the process, combine it with new algebraic techniques. We make efficient use of the Hasse--Weil bound for the number of points on an elliptic curve modulo a prime $p$. A crucial component of our approach is Bombieri's theorem on exponential sums along curves. This is achieved via a theorem of Cobeli and the fourth author \cite{cobeli} on points on curves satisfying additional congruence constraints. This allows us to establish Theorems \ref{Theorem: 8 Icosahedral Numbers} and \ref{Theorem: 8 Dodecahedral Numbers}, where we successfully reduce the lower bound threshold for $m$ to a level within computational reach. Finally, we verify the conjectures computationally up to the lower bound thresholds and thereby establish Pollock’s conjectures, in particular, proving Theorems \ref{Theorem: Pollock Icosahedral} and \ref{Theorem: Pollock Dodecahedral}.
\subsection*{Structure of the Paper} The paper is organized as follows. We use the Hardy--Littlewood circle method to prove Theorem \ref{Theorem: Representation Icosahedral}. Section \ref{sec: Initial Set-up} covers the initial setup required for this method. Sections \ref{sec: Minor Arcs} and \ref{sec: Minor Arcs II} contain details of the minor arc estimates. Sections \ref{sec: Major Arcs}, \ref{sec: Major Arcs II}, and \ref{sec: Major Arcs Sec III} are devoted to the major arc estimates. We present the proof of Theorem \ref{Theorem: Representation Icosahedral} in Section \ref{sec: Proof of Main Theorem}. Section \ref{sec: Proof of Theorem 1.6} is devoted to the proofs of Theorems \ref{Theorem: 8 Icosahedral Numbers} and \ref{Theorem: 8 Dodecahedral Numbers}, which in turn yield Theorems \ref{Theorem: Pollock Icosahedral} and \ref{Theorem: Pollock Dodecahedral} as corollaries. Since the methods pertaining to the icosahedral and dodecahedral cases are similar, to avoid redundancy, we provide brief outlines of the proofs of Theorems \ref{Theorem: 8 Dodecahedral Numbers} and \ref{Theorem: Pollock Dodecahedral}, highlighting the necessary adjustments. 

\subsection*{General Notation} We employ some standard notation that will be used throughout the article.

\begin{itemize}
    \item Throughout the paper, the expressions $f(X)=O(g(X))$, $f(X) \ll g(X)$, and $g(X) \gg f(X)$ are equivalent to the statement that $|f(X)| \leqslant C|g(X)|$ for all sufficiently large $X$, where $C>0$ is an absolute constant. 
    \item We define by $d(n)$, the divisor function defined on $\mathbb{N}$ by $d(n) = \sum_{d \mid n}1.$
    \item We write $e(\theta)$ to denote the expression $e^{2\pi i \theta}$.
    \item Given $\alpha \in \mathbb{R}$, the notation $\|\alpha\|$ denotes the smallest distance of $\alpha$ to an integer.
    \item Given a prime $p$, the notation $\mathbb{F}_{p}$ refers to the set of residue classes modulo $p$.
    \item We refer to $I_{n}$ as the $n$-th icosahedral number given by \eqref{Icosahedral Number : defn} and also employ the equivalent notations $I_t =\frac{5t^{3}}{2} - \frac{5t^{2}}{2} + t, t \in \mathbb{R} $ and $g(x) = \frac{5x^{3}}{2} - \frac{5x^{2}}{2} + x, x \in \mathbb{R}$ in Sections \ref{sec: Major Arcs II} and \ref{sec: Proof of Theorem 1.6} respectively to extend the definition over all real numbers.
\end{itemize}
\section{Setup for the Circle Method}\label{sec: Initial Set-up}
In 1918, Hardy and Ramanujan \cite{Hardy-Ramanujan} introduced the circle method to derive asymptotic estimates for the partition function. This method was subsequently developed further by Hardy and Littlewood, and later by Vinogradov. Here we will employ the Hardy--Littlewood circle method to prove Theorem \ref{Theorem: Representation Icosahedral}. For $m,s \in \mathbb{N}$, denote by $\mathcal{I}_s(m)$ the number of ways of writing a positive integer $m$ as the sum of $s$ icosahedral numbers. For $N \in \mathbb{N}$, $\alpha \in \mathbb{R}$, consider the sum
\begin{align}\label{Vinogradov's Sum}
f_N(\alpha,\mathcal{I}) = \sum_{n=1}^{N} e(\alpha I_n).
\end{align}
where $I_n$ is given by \eqref{Icosahedral Number : defn}. By abuse of notation, we write $f(\alpha) = f_N(\alpha,\mathcal{I})$, where $N$ is clearly implied. Then we have
\begin{align}
    f(\alpha)^s &= \sum_{n_1=1}^{N} \sum_{n_2=1}^{N} \cdots \sum_{n_s=1}^{N} e(\alpha (I_{n_1}+I_{n_2}+\cdots+I_{n_s})) \notag = \sum_{m=1}^{sI_N} \mathcal{I}_{s}(m, I_N) e(\alpha m),
\end{align}
where $\mathcal{I}_{s}(m, I_N)$ denotes the number of ways of writing a positive integer $m$ as the sum of $s$ icosahedral numbers, none of which is greater than $I_N$. Note that if $m \leqslant I_N$, then trivially $\mathcal{I}_s(m) = \mathcal{I}_{s}(m, I_N)$. Then Cauchy's integral formula
gives
\begin{align}\label{Orthogonality Relation}
  \int_0^1 f(\alpha)^s e(-\alpha m) \dd \alpha=\mathcal{I}_s(m).  
\end{align}
Let $m$ be large. We let
\begin{align}\label{Defining N}
N = \bigg \lceil \bigg (\frac{2m}{5}\bigg )^{\frac{1}{3}} \bigg \rceil +1, \quad P=N^\delta, 
\end{align}
where $\delta \in \mathbb{R}$ satisfies $N^{3\delta-3}<\frac{1}{2}$. When $1 \leqslant a \leqslant q \leqslant P$ and $(a, q)=1$, let
$$
\mathfrak{M}(q, a)=\left\{\alpha:|\alpha-a / q| \leqslant N^{\delta-3}\right\} .
$$
We call $\mathfrak{M}(q, a)$ the major arcs. Let $\mathfrak{M}$ denote the union of the $\mathfrak{M}(q, a)$'s. It is convenient to work on the unit interval $\mathfrak{U}=\left(N^{\delta-3}, 1+N^{\delta-3}\right]$ 
rather than $(0,1]$. This avoids any difficulties associated with having only `half major arcs' at 0 and 1 . Observe that $\mathfrak{M} \subset \mathfrak{U}$. The set $\mathfrak{m}=\mathfrak{U} \backslash \mathfrak{M}$ forms the minor arcs.
When $a / q \neq a^{\prime} / q^{\prime}$ and $1 \leqslant q, q^{\prime} \leqslant P$, one has
$$
\left|\frac{a}{q}-\frac{a^{\prime}}{q^{\prime}}\right| \geqslant \frac{1}{q q^{\prime}}>N^{-2\delta}>2 N^{\delta-3}.
$$
Thus, the $\mathfrak{M}(q, a)$ are pairwise disjoint. Using \eqref{Orthogonality Relation}, we have
\begin{align}\label{Major+Minor Arcs}
\mathcal{I}_s(m) = \int_{\mathfrak{M}} f(\alpha)^s e(-\alpha m) \dd \alpha+\int_{\mathfrak{m}} f(\alpha)^s e(-\alpha m) \dd \alpha.
\end{align}
To obtain an asymptotic expression for $\mathcal{I}_s(m)$, we estimate the major and minor arc integrals. We treat the integral over the minor arcs in Sections \ref{sec: Minor Arcs} and \ref{sec: Minor Arcs II}. The major arcs are addressed in Sections \ref{sec: Major Arcs}, \ref{sec: Major Arcs II}, and \ref{sec: Major Arcs Sec III}.

\section{Minor Arcs : Part I}\label{sec: Minor Arcs}
In this section, we establish some preliminary lemmas that will be needed in our treatment of the minor arcs.
\begin{lem}\label{Explicit Divisor Bound}
Let $d(n)$ denote the divisor function. Then for $n \geqslant e^3$, we have
\[ d(n) \leqslant n^{\frac{1.0661}{\log \log n}}.
\]
\end{lem}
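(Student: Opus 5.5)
This is Nicolas–Robin's explicit bound: for $n\geqslant 3$, $\log d(n)\leqslant 1.5379\log 2\cdot \frac{\log n}{\log\log n}$, and $1.5379\log 2\approx 1.06597<1.0661$. Exponentiating gives the claim (for $n\ge e^3$, where $\log\log n>0$). So the cleanest plan is to cite Nicolas and Robin. If a self-contained argument is wanted, I would reproduce their proof along the following lines.

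\textbf{Reduction to a one-parameter bound.} Write $n=\prod p^{a_p}$, so $d(n)=\prod (a_p+1)$. For any parameter $\lambda>0$,
\[
\frac{d(n)}{n^{1/\lambda}}=\prod_p \frac{a_p+1}{p^{a_p/\lambda}}\leqslant \prod_{p<2^{\lambda}} \max_{a\geqslant 0}\frac{a+1}{p^{a/\lambda}},
\]
since each factor with $p\geqslant 2^\lambda$ is at most $(a+1)/2^a\leqslant 1$. Choose $\lambda=\log\log n\cdot c'$ appropriately, bound each maximum by elementary calculus (roughly $\frac{\lambda}{e\log p}p^{1/\lambda}$ for small $p$), and estimate the product using explicit Chebyshev bounds on $\theta(x)$ and $\pi(x)$ (Rosser–Schoenfeld). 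This yields $\log d(n)\leqslant \frac{\log 2\,\log n}{\log\log n}\bigl(1+O(\frac{\log\log\log n}{\log\log n})\bigr)$ with explicit constants. The bound is then below $1.0661$ once $n$ exceeds some explicit $N_0$.

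\textbf{Main obstacle.} The hard part is the intermediate range $e^3\leqslant n\leqslant N_0$, where the asymptotic constant $\log 2\approx0.693$ has not yet taken over. The worst case is attained at a highly composite number, $n=6983776800$, which is why the constant is $1.5379\log 2$. For this range I would use the fact that the maximum of $\log d(n)\log\log n/\log n$ over $n\le N_0$ is attained on superior highly composite numbers. These are finitely many and explicitly listable. I would verify the inequality for them, with interpolation between consecutive ones, by a finite computation.

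Since this is exactly Nicolas–Robin's theorem, in the paper I would simply cite it.
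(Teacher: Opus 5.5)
Your proposal is correct and matches the paper, which proves this lemma simply by citing Nicolas--Robin's bound $\log d(n)\leqslant 1.5379\log 2\,\frac{\log n}{\log\log n}$ for $n\geqslant 3$ (with equality at $n=6983776800$). One small numerical slip: $1.5379\log 2\approx 1.06599$, not $1.06597$, but this is still below $1.0661$, so the conclusion is unaffected.
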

\begin{proof}
See Nicolas--Robin \cite{nicolas1983majorations}.
\end{proof}
Let $\psi(x)$ be a real-valued function of $x$, and denote by $\Delta_1$, the forward difference operator
\begin{align}
\Delta_1(\psi(x) ; h)=\psi(x+h)-\psi(x).
\end{align}
We then define $\Delta_j$ for $j \geqslant 2$ recursively by means of the relation
$$
\begin{aligned}
\Delta_j(\psi(x) ; \mathbf{h}) =\Delta_j\left(\psi(x) ; h_1, \ldots, h_j\right) =\Delta_1\left(\Delta_{j-1}\left(\psi(x) ; h_1, \ldots, h_{j-1}\right) ; h_j\right) .
\end{aligned}
$$
By convention, we write $\Delta_0(\psi(x) ; h)=\psi(x)$. It's easy to see that when $1 \leqslant j \leqslant k$,
$$
\Delta_j\left(x^k ; \mathbf{h}\right)=h_1 \ldots h_j p_j\left(x ; h_1, \ldots, h_j\right),
$$
where $p_j$ is a polynomial in $x$ of degree $k-j$ with leading coefficient $k ! /(k-j)!$. By the linearity of the operator $\Delta_j$, it follows that
\[
\Delta_j\left(a_k x^k+\cdots+a_1 x ; \mathbf{h}\right)=\sum_{i=1}^k a_i \Delta_j\left(x^i ; \mathbf{h}\right).
\]
\begin{lem}\label{Weyl Differencing}
Let $\psi(x)$ be a real-valued arithmetic function, and suppose
$$
F(\psi)=\sum_{1 \leqslant x \leqslant X} e(\psi(x)).
$$
Then for each $j \in \mathbb{N}$,
$$
|F(\psi)|^{2^j} \leqslant(2 X)^{2^j-j-1} \sum_{\left|h_1\right|<X} \cdots \sum_{\left|h_j\right|<X} \sum_{x \in T_j(\mathbf{h})} e\left(\Delta_j(\psi(x) ; \mathbf{h})\right),
$$
where $T_j(\mathbf{h})$ denotes the interval of integers defined by putting $T_0(h)=[1, X]$, and for $j \geqslant 1$, we recursively set
$$
T_j\left(h_1, \ldots, h_j\right)=T_{j-1}\left(h_1, \ldots, h_{j-1}\right) \cap\left\{x \in[1, X]: x+h_j \in T_{j-1}\left(h_1, \ldots, h_{j-1}\right)\right\}.
$$
\end{lem}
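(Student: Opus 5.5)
This is the standard Weyl differencing inequality (Vaughan, Lemma 2.3), and I would prove it by induction on $j$. The inductive step rests on one auxiliary estimate, which I would establish first: for any interval of integers $T \subseteq [1,X]$ and any real function $\phi$,
\[
\Bigl|\sum_{x\in T} e(\phi(x))\Bigr|^2 \leqslant \sum_{|h|<X}\ \sum_{\substack{x\in T\\ x+h\in T}} e\bigl(\phi(x+h)-\phi(x)\bigr).
\]
To see this, expand $|S|^2=\sum_{y\in T}\sum_{x\in T} e(\phi(y)-\phi(x))$ and substitute $y=x+h$. Since $x,y\in[1,X]$, we have $|h|<X$, so this is an equality with the sum over $h$ restricted to $|h|<X$. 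The condition $x+h\in T$ is exactly the recursive definition of $T_j$ when $T=T_{j-1}$. The right-hand side is real, because the terms for $h$ and $-h$ are complex conjugates.

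\textbf{Base case $j=1$.} The exponent of $2X$ is $2^1-1-1=0$, and applying the auxiliary estimate with $T=T_0=[1,X]$ and $\phi=\psi$ gives the claim directly.

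\textbf{Inductive step.} Assume the claim for $j$, and write $S_j(\mathbf h)=\sum_{x\in T_j(\mathbf h)} e(\Delta_j(\psi(x);\mathbf h))$. By the inductive hypothesis, $|F|^{2^j}\leqslant (2X)^{2^j-j-1}\sum_{\mathbf h}S_j(\mathbf h)$. The left side is real and nonnegative, so the right side is real and we may bound it by $\sum_{\mathbf h}|S_j(\mathbf h)|$.

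Square both sides. By Cauchy--Schwarz over the at most $(2X)^j$ tuples $\mathbf h$ with $|h_i|<X$,
\[
|F|^{2^{j+1}}\leqslant (2X)^{2^{j+1}-2j-2}(2X)^j\sum_{\mathbf h}|S_j(\mathbf h)|^2 .
\]
The exponent is $2^{j+1}-j-2=2^{j+1}-(j+1)-1$, as required.

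Next apply the auxiliary estimate to each $|S_j(\mathbf h)|^2$, with $T=T_j(\mathbf h)$ and $\phi=\Delta_j(\psi;\mathbf h)$. The phase becomes $\Delta_1(\Delta_j(\psi(x);\mathbf h);h_{j+1})=\Delta_{j+1}(\psi(x);\mathbf h,h_{j+1})$. The summation range $x\in T_j$ with $x+h_{j+1}\in T_j$ is exactly $T_{j+1}(\mathbf h,h_{j+1})$. This yields the statement for $j+1$.

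The only point needing care is the realness and nonnegativity of the inner sums. This is what justifies replacing them by absolute values before squaring, and it is also why the bound after the auxiliary step can be written as a sum rather than with absolute values. Both facts follow from the pairing of $h$ with $-h$ noted above, so I expect no real obstacle; the rest is bookkeeping of the exponent of $2X$.
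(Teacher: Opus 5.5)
Your proposal is correct and is the standard inductive Cauchy--Schwarz argument of Vaughan's Lemma 2.3; the paper gives no proof of its own and simply cites Vaughan. One bookkeeping slip needs fixing: for non-integral $X$ there can be more than $2X$ integers $h$ with $|h|<X$ (e.g.\ five when $X=2.3$), but $T_j(\mathbf h)$ is empty unless every $|h_i|\leqslant\lfloor X\rfloor-1$, so Cauchy--Schwarz need only run over $(2\lfloor X\rfloor-1)^j\leqslant(2X)^j$ tuples.
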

\begin{proof}
See Vaughan \cite{vaughan1997hardy}.
\end{proof}

\begin{lem}\label{Geometric Sum}
Let $\alpha \in \mathbb{R}$. Let $X$ and $Y$ be real numbers with $Y>1$. Then
$$
\bigg \lvert \sum_{X<x \leqslant X+Y} e(\alpha x) \bigg \rvert \leqslant \min \left\{Y+1,\frac{1}{2}\|\alpha\|^{-1}\right\}.
$$
\end{lem}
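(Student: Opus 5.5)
The first bound is trivial and the second is a direct geometric-series computation. The interval $(X,X+Y]$ contains at most $\lfloor Y\rfloor+1\leqslant Y+1$ integers, and each term has modulus $1$. The triangle inequality therefore gives the bound $Y+1$ at once. It remains to prove the bound $\frac12\|\alpha\|^{-1}$. When $\alpha\in\mathbb{Z}$ we have $\|\alpha\|=0$, and we read $\frac12\|\alpha\|^{-1}$ as $+\infty$, so the minimum is $Y+1$ and nothing more is needed. Hence we may assume $\alpha\notin\mathbb{Z}$, so that $e(\alpha)\neq 1$.

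Let $x_0$ be the smallest integer exceeding $X$, and let $K$ be the number of integers in $(X,X+Y]$. Summing the geometric progression gives
\[
\sum_{X<x\leqslant X+Y} e(\alpha x)=e(\alpha x_0)\sum_{k=0}^{K-1}e(\alpha k)=e(\alpha x_0)\,\frac{e(\alpha K)-1}{e(\alpha)-1}.
\]
We bound the numerator by $|e(\alpha K)-1|\leqslant 2$. For the denominator we use the identity $|e(\alpha)-1|=|e(\alpha/2)-e(-\alpha/2)|=2|\sin\pi\alpha|$. Together these give
\[
\bigg|\sum_{X<x\leqslant X+Y} e(\alpha x)\bigg|\leqslant \frac{1}{|\sin \pi\alpha|}.
\]

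The only nontrivial step is the elementary inequality $|\sin\pi\alpha|\geqslant 2\|\alpha\|$. Since $|\sin\pi\alpha|$ is $1$-periodic and even in $\alpha$, we have $|\sin\pi\alpha|=\sin(\pi\|\alpha\|)$. Put $\theta=\|\alpha\|\in[0,\tfrac12]$. The function $\sin\pi\theta$ is concave on $[0,\tfrac12]$ and agrees with the linear function $2\theta$ at both endpoints $\theta=0$ and $\theta=\tfrac12$. By concavity it lies above this chord on the whole interval, so $\sin\pi\theta\geqslant 2\theta$ (Jordan's inequality).

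Substituting this into the previous display yields the bound $\frac{1}{2\|\alpha\|}$. Combined with the trivial bound $Y+1$, this gives the stated minimum. The hypothesis $Y>1$ is not needed for either estimate; it only guarantees that the sum is nonempty.
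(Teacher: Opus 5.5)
Your proof is correct and is essentially the paper's argument: the trivial bound $Y+1$, summing the geometric progression to get $|\sin\pi\alpha|^{-1}$, and then $\sin\pi\theta\geqslant 2\theta$ on $[0,\tfrac12]$ combined with periodicity. You are a bit more careful than the paper in using evenness to write $|\sin\pi\alpha|=\sin(\pi\|\alpha\|)$ and in justifying Jordan's inequality by concavity. Your closing aside slightly overstates things: the trivial bound still needs $Y\geqslant -1$, since for $Y<-1$ the minimum is negative.
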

\begin{proof}
The trivial upper bound for the sum is $Y+1$, which suffices to establish the desired conclusion when $\|\alpha\|=0$. When $\|\alpha\| \neq 0$, we deduce that
\begin{align}\label{GP Series}
\bigg \lvert \sum_{X<x \leqslant X+Y} e(\alpha x) \bigg \rvert & =\bigg \lvert \frac{e(\alpha\lfloor X+Y+1\rfloor)-e(\alpha\lfloor X+1\rfloor)}{e(\alpha)-1} \bigg \rvert \leqslant |\sin (\pi \alpha)|^{-1} .
\end{align}
The function $|\sin (\pi \alpha)|$ is periodic in $\alpha$ with period 1, and when $0 \leqslant \alpha \leqslant 1 / 2$, the inequality $2 \alpha \leqslant \sin (\pi \alpha) \leqslant \pi \alpha$ holds. Hence we have $|\sin (\pi \alpha)|^{-1}  \leqslant \frac{1}{2\|\alpha\|}$. Substituting this in \eqref{GP Series} and combining this with the trivial bound, the proof follows.
\end{proof}
Our next lemma is due to Vinogradov. We provide a complete proof here for the convenience of the reader.
\begin{lem}\label{Explicit Weyl's Inequality}
Let $X,Y \in \mathbb{R}$ with $X,Y \geqslant 1$. Let $\alpha,\beta \in \mathbb{R}$ and suppose there exist $a \in \mathbb{Z}$ and $q \in \mathbb{N}$ with $(a, q)=1$, $q>100$ and $|\alpha-a / q| \leqslant q^{-2}$. Then
$$
\sum_{1 \leqslant x \leqslant X} \min \left\{Y,\|\alpha x+\beta\|^{-1}\right\} \leqslant 8X Y\left(q^{-1}+Y^{-1}+X^{-1}+q(X Y)^{-1}\right) \log q .
$$
\end{lem}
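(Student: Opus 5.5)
This is the standard Vinogradov lemma, and I would prove it by the usual blocking argument, tracking constants explicitly. Write $\alpha = a/q + \theta/q^2$ with $|\theta|\leqslant 1$. Split the range $1\leqslant x\leqslant X$ into consecutive blocks of $q$ integers, $x = x_0 + r$ with $0\leqslant r<q$. The number of blocks is at most $X/q+1$. It then suffices to show that a single block contributes
\[
\ll Y + q\log q,
\]
with an explicit constant. Summing over blocks then gives
\[
\ll (X/q+1)(Y+q\log q) = XY\bigl(q^{-1}+Y^{-1}+X^{-1}+q(XY)^{-1}\bigr)\cdot O(\log q),
\]
since $X\log q\cdot 1 = XY\cdot Y^{-1}\log q$ and $q\log q = XY\cdot q(XY)^{-1}\log q$.

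Fix a block and write
\[
\alpha x+\beta = \alpha x_0+\beta + ar/q + \theta r/q^2 .
\]
Since $0\leqslant r<q$, the perturbation $\theta r/q^2$ has absolute value less than $1/q$. As $r$ runs over $0,\dots,q-1$, the residues $ar \bmod q$ run over all of $\mathbb{Z}/q\mathbb{Z}$, because $(a,q)=1$. Hence the points $\alpha x+\beta \bmod 1$ take the form $\gamma + j/q + \epsilon_j$ with $|\epsilon_j|<1/q$, where $j$ is a permutation of $0,\dots,q-1$ and $\gamma$ is fixed. Consequently, for each integer $k$, at most a bounded number of these points (say $3$) satisfy $\|\alpha x+\beta\|\in[k/q,(k+1)/q)$, and the same holds on the negative side.

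For the at most $O(1)$ points with $\|\cdot\|<1/q$, I use the bound $Y$. For the remaining points, $\|\alpha x+\beta\|\geqslant k/q$ with $k\geqslant 1$, and each value of $k$ is hit at most $O(1)$ times (counting both sides). Their contribution is therefore at most
\[
\sum_{1\leqslant k\leqslant q/2} O(1)\,\frac{q}{k} \ll q\log q .
\]
This gives the per-block bound $cY + c'q\log q$.

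The main obstacle is the bookkeeping needed to reach the specific constant $8$, and to absorb the $Y$-term into $\log q$. Here the assumption $q>100$ matters: it gives $\log q>4.6$, which lets me replace the constants $c$ and $c'$ by a single factor multiplying $\log q$. The same bound lets me control the harmonic sum explicitly, via $\sum_{k\leqslant q/2}1/k\leqslant \log q$ up to a constant.

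I would carry this out carefully as follows. Shift $r$ so that the perturbation is controlled symmetrically. Count precisely: at most $2$ points per interval of length $1/q$ on each side of the nearest integer, and the "bad" points near $0$ number at most $3$. Then check that the final total is at most
\[
\Bigl(\frac{X}{q}+1\Bigr)\bigl(3Y + 4q\log q\bigr)\leqslant 8XY\bigl(q^{-1}+Y^{-1}+X^{-1}+q(XY)^{-1}\bigr)\log q,
\]
using $\log q\geqslant 1$ generously.
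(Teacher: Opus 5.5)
Your proof is correct and is essentially the paper's argument: both cut $[1,X]$ into consecutive blocks, show that the points $\alpha x+\beta$ in each block are spread out modulo $1$ at scale $1/q$, and sum a harmonic series to reach $\ll (X/q+1)(Y+q\log q)$, which leaves ample room for the constant $8$. The differences are minor. You use full blocks of length $q$ and count via the complete residue system $ar \bmod q$, giving at most $3$ points with $\|\alpha x+\beta\|<1/q$ and at most $2$ per side in each shell of width $1/q$. The paper instead uses blocks of length $\lfloor q/2\rfloor+1$, so that any two points are $1/(2q)$-separated, at the cost of needing $2X/q+1$ blocks.
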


\begin{proof}
Let $\theta=\alpha-a / q$, so that $|\theta| \leqslant 1 / q^2$. 
We break the range of summation into intervals of length $\lfloor q / 2\rfloor+1$, and consider a typical interval, say
$
J=\{n, n+1, \ldots, n+\lfloor q / 2\rfloor\} .
$
For any two distinct integers $n_1$ and $n_2$ with $n_2<n_1$ lying in $J$, we have
$$
\left\|\left(\alpha n_1+\beta\right)-\left(\alpha n_2+\beta\right)\right\|=\left\|\alpha\left(n_1-n_2\right)\right\| \geqslant\left\|\frac{a\left(n_1-n_2\right)}{q}\right\|-(n_1-n_2)|\theta| .
$$
Since $n_1 \neq n_2$ and $(n_1-n_2) \leqslant q / 2$, $q \nmid\left(n_1-n_2\right)$. Noting that $(a, q)=1$, we arrive at
$$
\left\|\left(\alpha n_1+\beta\right)-\left(\alpha n_2+\beta\right)\right\| \geqslant \frac{1}{q}-\frac{q / 2}{q^2}=\frac{1}{2 q} .
$$
Hence, mod 1 , $\alpha n_1+\beta$ and $\alpha n_2+\beta$ are spaced by a distance at least $(2 q)^{-1}$. Thus we have
$$
\begin{aligned}
\sum_{n \in J} \min \left\{Y,\|\alpha n+\beta\|^{-1}\right\} & \leqslant Y+\sum_{1<r \leqslant \lfloor q / 2\rfloor}\left|\frac{r}{2 q}\right|^{-1} \\
& \leqslant Y+2 \sum_{1 \leqslant r \leqslant q / 2} \frac{q}{r} \leqslant Y+4q \log q .
\end{aligned}
$$
Note that there are at most $\lceil X /(q / 2)\rceil$ intervals of the shape $J$ required to cover all of the summands $x$ with $1 \leqslant x \leqslant X$. Therefore we obtain
$$
\sum_{1 \leqslant x \leqslant X} \min \left\{Y,\|\alpha x+\beta\|^{-1}\right\} \leqslant (2X / q+1)(Y+4q \log q),
$$
and the conclusion of the lemma follows.
\end{proof}
The following corollary is an immediate consequence of Lemma \ref{Explicit Weyl's Inequality}.
\begin{coro}\label{Weyl's Inequality with Eta}
Let $X,Y \in \mathbb{R}$ with $X,Y \geqslant 1$. Let $\alpha,\beta \in \mathbb{R}$ and suppose there exist $a \in \mathbb{Z}$ and $q \in \mathbb{N}$ with $(a, q)=1$, $q>100$ and $|\alpha-a / q| \leqslant \eta q^{-2}$, for some absolute constant $\eta \geqslant 1$. Then
$$
\sum_{1 \leqslant x \leqslant X} \min \left\{Y,\|\alpha x+\beta\|^{-1}\right\} \leqslant 8X Y\eta \left(q^{-1}+Y^{-1}+X^{-1}+q(X Y)^{-1}\right) \log q .
$$
\end{coro}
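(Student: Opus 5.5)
Since $\eta\geqslant 1$ is allowed, the hypothesis $|\alpha-a/q|\leqslant q^{-2}$ of Lemma \ref{Explicit Weyl's Inequality} may fail, so we cannot quote that lemma directly. Instead we rerun its proof with $\theta=\alpha-a/q$ now satisfying $|\theta|\leqslant \eta q^{-2}$, and shrink the blocks by a factor $\eta$ so the spacing argument still works.

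\textbf{Step 1: blocks of length $L=\lfloor q/(2\eta)\rfloor+1$.} Split $[1,X]$ into consecutive blocks $J$ of $L$ integers. For distinct $n_1>n_2$ in one block we have $0<n_1-n_2\leqslant q/(2\eta)\leqslant q/2$, so $q\nmid(n_1-n_2)$. Since $(a,q)=1$, this gives $\|a(n_1-n_2)/q\|\geqslant 1/q$. Hence
\[
\|\alpha(n_1-n_2)\|\geqslant \frac1q-(n_1-n_2)|\theta|\geqslant \frac1q-\frac{q}{2\eta}\cdot\frac{\eta}{q^2}=\frac{1}{2q}.
\]
So the points $\alpha n+\beta$ modulo $1$, for $n\in J$, are $(2q)^{-1}$-separated. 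As in the lemma, at most one point lies within $1/(2q)$ of an integer, and it contributes at most $Y$. The others, ordered by distance, have $\|\alpha n+\beta\|\geqslant r/(2q)$ with at most two points per index $r$. Therefore
\[
\sum_{n\in J}\min\{Y,\|\alpha n+\beta\|^{-1}\}\leqslant Y+4q\log q,
\]
with the same constant as before; using fewer points in the block only helps.

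\textbf{Step 2: count the blocks.} The number of blocks is at most $X/L+1\leqslant 2\eta X/q+1$, since $L\geqslant q/(2\eta)$. Therefore
\[
\sum_{1\leqslant x\leqslant X}\min\{Y,\|\alpha x+\beta\|^{-1}\}\leqslant \Bigl(\frac{2\eta X}{q}+1\Bigr)\bigl(Y+4q\log q\bigr).
\]
Expanding the product gives four terms:
\[
\frac{2\eta XY}{q},\qquad 8\eta X\log q,\qquad Y,\qquad 4q\log q.
\]
Because $\eta\geqslant1$ and $q>100$ (so $\log q>1$), these are bounded respectively by
\[
8\eta XY q^{-1}\log q,\qquad 8\eta XY\,Y^{-1}\log q,\qquad 8\eta XY\,X^{-1}\log q,\qquad 8\eta XY\,q(XY)^{-1}\log q.
\]
Summing yields $8XY\eta\left(q^{-1}+Y^{-1}+X^{-1}+q(XY)^{-1}\right)\log q$, as claimed.

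\textbf{Main obstacle.} The only delicate point is Step 1: checking that shrinking the blocks by $\eta$ exactly compensates the weaker approximation, while keeping the harmonic-sum bound $4q\log q$ unchanged. The rest is bookkeeping, where the factor $\eta$ is absorbed using $\eta\geqslant1$.
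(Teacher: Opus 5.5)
Your proof is correct and follows the paper's route: the paper also reruns the proof of Lemma \ref{Explicit Weyl's Inequality} with blocks of length $\lfloor q/(2\eta)\rfloor+1$, which restores the $(2q)^{-1}$-separation and gives $(2\eta X/q+1)(Y+4q\log q)$, and your term-by-term bookkeeping spells out what the paper leaves implicit. One small slip: two points (e.g.\ at $\pm 1/(4q)$) can both lie within $1/(2q)$ of an integer, but the per-block bound $Y+4q\log q$ still holds, because at most $k$ points satisfy $\|\alpha n+\beta\|<k/(4q)$, so the $k$-th closest point ($k\geqslant 1$) contributes at most $4q/k$, and $4q\sum_{k\leqslant q/2}k^{-1}\leqslant 4q\log q$ for $q>100$.
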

\begin{proof}
The case when $\eta=1$ is treated in Lemma \ref{Explicit Weyl's Inequality}. For the general case, we work with intervals of length $\lfloor \frac{q}{2\eta}\rfloor+1$. The rest of the argument follows as in the proof of Lemma \ref{Explicit Weyl's Inequality}. 
\end{proof}
\begin{lem}
Let $\eta \geqslant 1$ be fixed and $X \geqslant e^3$. Let $\alpha_1, \alpha_2, \alpha_3 \in \mathbb{R}$. Suppose there exist $a \in \mathbb{Z}$ and $q \in \mathbb{N}$ which satisfy $(a, q)=1, q>100$ and $\left|\alpha_3-a / q\right| \leqslant \eta q^{-2}$. Then
$$
\bigg \lvert \sum_{1 \leqslant x \leqslant X} e\left(\alpha_1 x+\alpha_{2}x^2+\alpha_3 x^3\right) \bigg \rvert \leqslant 2X^{\frac{3}{4}}+8X^{1+\frac{0.53305}{\log (2\log X)}}\eta^{\frac{1}{4}}\left(q^{-1}+X^{-1}+q X^{-3}\right)^{\frac{1}{4}} (\log q)^{\frac{1}{4}}.
$$ \label{Weyl Inequality V2}
\end{lem}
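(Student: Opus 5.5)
This is the explicit cubic Weyl inequality. I would derive it from Lemma \ref{Weyl Differencing} with $j=2$, then Lemma \ref{Geometric Sum}, Corollary \ref{Weyl's Inequality with Eta}, and the divisor bound Lemma \ref{Explicit Divisor Bound}.

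\emph{Step 1: Weyl differencing.} Put $\psi(x)=\alpha_1x+\alpha_2x^2+\alpha_3x^3$. Lemma \ref{Weyl Differencing} with $j=2$ gives
\[
|F|^4\leqslant 2X\sum_{|h_1|<X}\sum_{|h_2|<X}\Big|\sum_{x\in T_2(\mathbf h)}e(\Delta_2(\psi(x);h_1,h_2))\Big|.
\]
By the formula for $\Delta_j(x^k;\mathbf h)$, we have $\Delta_2(\psi;\mathbf h)=6\alpha_3h_1h_2x+c(h_1,h_2)$, where $c$ does not depend on $x$. The range $T_2(\mathbf h)$ is an interval of length at most $X$.

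\emph{Step 2: the inner sum.} Split off the terms with $h_1h_2=0$. There are at most $4X$ such pairs $(h_1,h_2)$, and each contributes at most $X$. These give at most $8X^3$ to $|F|^4$. After taking fourth roots and using $(a+b)^{1/4}\leqslant a^{1/4}+b^{1/4}$, they produce a term of size $8^{1/4}X^{3/4}\leqslant 2X^{3/4}$.

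For $h_1h_2\neq0$, apply Lemma \ref{Geometric Sum}. The inner sum is then at most $\min\{X,\|6\alpha_3h_1h_2\|^{-1}\}$; the factor $\tfrac12$ and the $+1$ are absorbed harmlessly. Group the pairs by $n=6|h_1h_2|\leqslant 6X^2$, using the symmetry $h\mapsto -h$. The number of pairs giving a fixed $n$ is at most $4d(n/6)\leqslant 4d(n)$. This yields
\[
|F|^4\leqslant 8X^3+8X\max_{n\leqslant 6X^2}d(n)\sum_{1\leqslant n\leqslant 6X^2}\min\{X,\|\alpha_3 n\|^{-1}\}.
\]

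\emph{Step 3: apply Corollary \ref{Weyl's Inequality with Eta}.} Use it with summation length $6X^2$ and $Y=X$. The resulting bound is
\[
8\cdot 6X^3\eta\big(q^{-1}+X^{-1}+(6X^2)^{-1}+q/(6X^3)\big)\log q \ll X^3\eta\,(q^{-1}+X^{-1}+qX^{-3})\log q .
\]
Next apply Lemma \ref{Explicit Divisor Bound} to $n\leqslant 6X^2$. Since $X\geqslant e^3$, $d(n)\leqslant (6X^2)^{1.0661/\log\log(6X^2)}$. Then $\log\log(6X^2)\geqslant\log(2\log X)$, and $(6X^2)^{1/\log\log}$ is $X^{2/\log(2\log X)}$ up to a bounded factor. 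Taking the fourth root turns $2\cdot1.0661/4$ into the exponent $0.53305/\log(2\log X)$. The remaining numerical constants, $(8\cdot 4\cdot 48)^{1/4}$ together with the slack from the divisor estimate, should come to at most $8$.

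The main obstacle is this explicit bookkeeping: showing that all constants really fit under $2$ and $8$. In particular one must control the passage from $6X^2$ to $X^2$ inside the divisor-bound exponent, and avoid losing a factor of $2$ in the multiplicity count. If the constants do not fit, I would count only $h_1,h_2>0$, multiply by $4$, and bound $d(|h_1h_2|)$ directly rather than $d(n)$. This avoids the factor $6$ in the divisor argument, at the cost of applying the corollary to the sequence $6\alpha_3 n$ restricted to multiples of $6$.
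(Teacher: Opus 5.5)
Your proposal is correct and follows the paper's proof essentially step for step. The shared steps are Weyl differencing with $j=2$, the geometric-sum bound for the phase $6\alpha_3h_1h_2x$, separating the pairs with $h_1h_2=0$, grouping the rest by $n=6|h_1h_2|$ with multiplicity $4d(n/6)$, and then Corollary \ref{Weyl's Inequality with Eta} (summation length $6X^2$) together with Lemma \ref{Explicit Divisor Bound}.

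The paper actually uses your fallback: it bounds $d(n/6)$ through $n/6=|h_1h_2|\leqslant X^2$, so that $\log\log(X^2)=\log(2\log X)$ exactly, and then extends the sum to all $n\leqslant 6X^2$ by positivity, which costs nothing. Your main route also fits under the constant $8$. Your $(8\cdot4\cdot48)^{1/4}$ counts the multiplicity $4$ twice, since it is already inside $8X=2X\cdot 4$. The correct constant is $(8\cdot 48\cdot e^{1.0661}\cdot 1.01)^{1/4}\approx 5.8<8$.
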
 
\begin{proof}
We write $\psi(x)=\alpha_1 x+ \alpha_{2}x^2 +\alpha_3 x^3$ and
$$
F(\boldsymbol{\alpha})=\sum_{1 \leqslant x \leqslant X} e(\psi(x)).
$$
Here by $\boldsymbol{\alpha}$, we denote the tuple $(\alpha_1,\alpha_2,\alpha_3)$. When $q>X^3$, the desired estimate is trivial. Hence, we may assume $q \leqslant X^3$. We apply Lemma \ref{Weyl Differencing} with $j=2$ to obtain the bound
$$
|F(\boldsymbol{\alpha})|^{4} \leqslant 2X \bigg \lvert \sum_{\left|h_1\right|<X} \sum_{\left|h_{2}\right|<X} \mathcal{E}(\mathbf{h}) \bigg \rvert,
$$
where 
$$
\mathcal{E}(\mathbf{h})=\sum_{x \in I_{2}(\mathbf{h})} e\left(\Delta_{2}(\psi(x) ; \mathbf{h})\right),
$$
and $I_{2}(\mathbf{h})$ is a suitable interval of integers contained in $[1, X]$. Note that $\Delta_{2}(\psi(x) ; \mathbf{h})=6 h_1 h_{2} x \alpha_3+r,$ where $r=r(\boldsymbol{\alpha} ; \mathbf{h})$ is independent of $x$. Thus, Lemma \ref{Geometric Sum} delivers the bound
$$
\mathcal{E}(\mathbf{h}) \leqslant \min \left\{X+1,\left\|6 h_1 h_{2} \alpha_3\right\|^{-1}\right\}.
$$
Hence, we arrive at
$$
|F(\boldsymbol{\alpha})|^{4} \leqslant 2X \bigg \lvert \sum_{\left|h_1\right|<X} \sum_{\left|h_{2}\right|<X} \min \left\{X+1,\left\|6 h_1 h_{2} \alpha_3\right\|^{-1}\right\} \bigg \rvert .
$$
Accounting for the summands in which $h_1 h_{2}=0$, we are led from here to the bound
\begin{align}\label{CS Inequality}
|F(\boldsymbol{\alpha})|^{4} \leqslant 2X\bigg(3X^{2}+4\sum_{1 \leqslant n \leqslant 6 X^2} d\left(\frac{n}{6}\right) \min \left\{X+1,\left\|n \alpha_3\right\|^{-1}\right\}\bigg) .
\end{align}
Here, the factor $d(n/6)$ only arises when $n$ is divisible by $6$. Invoking Lemma \ref{Explicit Divisor Bound} and Corollary \ref{Weyl's Inequality with Eta}, we therefore obtain the estimate
\begin{align}
|F(\boldsymbol{\alpha})|^{4} &\leqslant 6X^{3}+8X^{1+\frac{2.1322}{\log (2\log X)}} \sum_{1 \leqslant n \leqslant 6 X^2} \min \left\{X+1,\left\|n \alpha_3\right\|^{-1}\right\} \notag \\
& \leqslant 6X^{3}+1000 X^{4+\frac{2.1322}{\log (2\log X)}} \eta \left( q^{-1}+X^{-1}+qX^{-3}\right)\log q. \notag
\end{align}
This implies that
\begin{align}
F(\boldsymbol{\alpha}) &\leqslant 2X^{\frac{3}{4}}+8X^{1+\frac{0.53305}{\log (2\log X)}}\eta^{\frac{1}{4}}\left(q^{-1}+X^{-1}+q X^{-3}\right)^{\frac{1}{4}} (\log q)^{\frac{1}{4}}\notag,
\end{align}
which concludes the proof.
\end{proof}

\begin{rem}\label{Cauchy Schwarz Remark}
Although the bound provided in Lemma \ref{Weyl Inequality V2} is sufficient for handling the minor arcs, it is not optimal, especially when $q$ is very small relative to $X$, say for instance when
\[
1 \leqslant q \leqslant X^{\frac{0.53305}{\log (2\log X)}}.
\]
This is primarily because, even though the bound on the divisor function $d(n)$ in Lemma \ref{Explicit Divisor Bound} is optimal pointwise, it is quite weak in practice, as stronger bounds are known on average. To improve the bound in Lemma \ref{Weyl Inequality V2}, we first revisit the proofs of Lemma \ref{Explicit Weyl's Inequality} and Corollary \ref{Weyl's Inequality with Eta}. Following arguments similar to those proofs, we obtain
\begin{align}\label{Cauchy Schwarz Trick}
\sum_{1 \leqslant n \leqslant 6 X^2} \bigg (\min\left\{X+1,\left\|n \alpha_3\right\|^{-1}\right\}\bigg)^2 \ll \eta \bigg ( \frac{X^2}{q}+1 \bigg) (X^2+q^2).
\end{align}
Hence by the Cauchy--Schwarz inequality, the sum over $n$ in \eqref{CS Inequality} can be bounded as follows:
\begin{align}
\sum_{1 \leqslant n \leqslant 6 X^2}& d\left(\frac{n}{6}\right) \min \left\{X+1,\left\|n \alpha_3\right\|^{-1}\right\} \notag \\
&\ll \bigg (\sum_{1 \leqslant m \leqslant X^2} d(m)^2 \bigg)^{\frac{1}{2}} \bigg (\sum_{1 \leqslant n \leqslant 6 X^2} \bigg (\min\left\{X+1,\left\|n \alpha_3\right\|^{-1}\right\}\bigg)^2 \bigg)^{\frac{1}{2}}  \notag \\
&\ll X^3 (\log X)^{\frac{3}{2}} \eta^{\frac{1}{2}}(q^{-\frac{1}{2}}+X^{-1}+q X^{-2}), \label{CS Inequality Final}
\end{align}
where in the final step, we have used \eqref{Cauchy Schwarz Trick} and standard bounds for the second moment of the divisor function due to Ramachandra and Sankaranarayanan \cite{Ramachandra}. The bound in \eqref{CS Inequality Final} is non-trivial when $(\log X)^{3+\varepsilon} \leqslant q \leqslant X^{2-\varepsilon}$ for any fixed $\varepsilon>0$. Substituting \eqref{CS Inequality Final} into \eqref{CS Inequality}, we obtain the improved estimate
\[
F(\boldsymbol{\alpha})\ll X^{\frac{3}{4}}+X (\log X)^{\frac{3}{8}}\eta^{\frac{1}{8}}\left(q^{-\frac{1}{2}}+X^{-1}+q X^{-2}\right)^{\frac{1}{4}},
\]
which refines the bound in Lemma \ref{Weyl Inequality V2} when $q$ is small compared to $X$. However, for larger values of $q$, specifically when $q \approx X^{3-\delta}$, this estimate becomes inefficient, and we need to revert to the bound given in Lemma \ref{Weyl Inequality V2}. For instance, see \cite{bbz2025}, where similar techniques were employed to achieve additional savings in the analysis of the major arcs.
\end{rem}

\section{Minor Arcs : Part II}\label{sec: Minor Arcs II}
Our primary goal in this section is to prove the following theorem.
\begin{thm}\label{Minor Arcs : thm}
Let $f(\alpha)$ be as defined in \eqref{Vinogradov's Sum}. Then for $s \geqslant 9$ and $m \geqslant 10^{10}$, we have
$$
\int_{\mathfrak{m}} f(\alpha)^s e(-m \alpha) \dd \alpha \leqslant 152 \cdot 21^{s-8} \cdot (\log m)^{\frac{s-8}{4}}m^{\frac{s}{3}-1-\frac{\delta(s-8)}{12}+\frac{0.53305(s-8)+6.3966}{1.2\log \log m}}.
$$
\end{thm}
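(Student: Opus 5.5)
The plan is the classical split
\[
\int_{\mathfrak{m}}|f(\alpha)|^s \dd\alpha \leqslant \Big(\sup_{\alpha\in\mathfrak{m}}|f(\alpha)|\Big)^{s-8}\int_0^1|f(\alpha)|^8\dd\alpha .
\]
The factor $(\ldots)^{s-8}$ in the statement, and the exponent $\frac{0.53305(s-8)}{\log\log m}$, both suggest this: each minor-arc factor should come from Lemma \ref{Weyl Inequality V2}, and the remaining $m^{s/3-1}$-type term from an eighth-moment bound.

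\textbf{Eighth moment.} I would bound $\int_0^1|f|^8$ by Hua's lemma for cubic polynomials, i.e. Weyl differencing (Lemma \ref{Weyl Differencing}) applied twice and combined with orthogonality. This counts solutions of $\Delta_2$-type equations, and the divisor bound of Lemma \ref{Explicit Divisor Bound} enters at that point. The expected bound is
\[
\int_0^1|f|^8 \ll N^{5}\cdot N^{c/\log\log N}.
\]
Since $N^5\asymp m^{5/3}=m^{8/3-1}$, this matches the exponent $\frac{s}{3}-1$ when $s=8$. The extra $\frac{6.3966}{\log\log m}$ presumably records the divisor-bound loss: $6.3966=6\times1.0661$, so it comes from several applications of $d(n)\leqslant n^{1.0661/\log\log n}$ at size about $m$. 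The constant $152$ would be tracked through this step. Concretely:
\begin{itemize}
\item $\int|f|^4\leqslant$ the number of solutions of $I_{x_1}+I_{x_2}=I_{y_1}+I_{y_2}$. Factoring the difference of cubics gives roughly $N^2\cdot N^{\epsilon}$ solutions.
\item Then $\int|f|^8\leqslant \sup|f|^{4}$-type differencing, using $|f|^4\leqslant 2N\sum_{h_1,h_2}\ldots$ multiplied against $|f|^4$, together with the divisor bound on the resulting linear forms. This is the standard Hua argument.
\end{itemize}

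\textbf{Sup on minor arcs.} Take $\alpha\in\mathfrak{m}$. By Dirichlet there are $a,q$ with $q\leqslant N^{3-\delta}$ and $|\alpha-a/q|\leqslant q^{-1}N^{\delta-3}\leqslant q^{-2}$. Since $\alpha\notin\mathfrak{M}$, we get $q>P=N^\delta$; for $m\geqslant10^{10}$ this also gives $q>100$, after a check. Now apply Lemma \ref{Weyl Inequality V2} with $\alpha_3=5\alpha/2$, using the rational approximation $5a/(2q)$ reduced. The denominator changes by a bounded factor, which is absorbed by taking $\eta$ a small constant such as $\eta=25/4$ or similar. This yields
\[
|f(\alpha)|\leqslant 2N^{3/4}+8N^{1+\frac{0.53305}{\log(2\log N)}}\eta^{1/4}\left(N^{-\delta}+N^{-1}+N^{-\delta}\right)^{1/4}(3\log N)^{1/4}.
\]
This is $\ll N^{1-\delta/4+0.53305/\log(2\log N)}(\log N)^{1/4}$. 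Converting via $N\approx(2m/5)^{1/3}$ gives $m^{1/3-\delta/12+\ldots}(\log m)^{1/4}$, and raising to the power $s-8$ produces $21^{s-8}(\log m)^{(s-8)/4}$ and the stated exponent. The denominator $1.2\log\log m$ should arise from bounding $\log(2\log N)\geqslant 1.2\log\log m$-type inequalities for $m\geqslant10^{10}$; the exponents are in terms of $N$ but are rewritten with $\log\log m$.

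\textbf{Main obstacle.} The hard part will be the explicit Hua-type eighth-moment bound with the precise divisor-loss constant $6.3966$ and constant $152$. Bookkeeping the boundary terms ($h_1h_2=0$), the ranges of the resulting linear forms (up to about $m$), and the conversion $N\leftrightarrow m$ explicitly is fiddly. A secondary nuisance is handling the factor $5/2$ in the leading coefficient when passing from the approximation of $\alpha$ to that of $\alpha_3$ while keeping $q>100$.
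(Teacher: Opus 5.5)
Your plan is correct and is essentially the paper's argument. The paper bounds the minor-arc integral by $(\sup_{\mathfrak m}|f|)^{s-8}\int_0^1|f|^8\dd\alpha$, takes $\int_0^1|f|^8\dd\alpha\leqslant 152N^{5+6.3966/\log\log N}$ directly from Lemma~\ref{Inductive Lemma} with $j=3$ (exactly the Hua-type differencing and divisor count you sketch, with $6.3966=2\cdot 3.1983$ coming from two divisor-bound losses at height $|K|\leqslant I_N$), and bounds $\sup_{\mathfrak m}|f|$ via Dirichlet plus Lemma~\ref{Weyl Inequality V2}.

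Two small fixes are needed:
\begin{itemize}
\item The paper takes $\eta=10$. That value is genuinely needed, because the reduced denominator of $5a/(2q)$ can be $2q$, so $\eta=25/4$ is too small.
\item The denominator $1.2\log\log m$ comes from $N\leqslant m^{1/3}$ together with $3\log\log N\geqslant 1.2\log\log m$. It does not come from $\log(2\log N)\geqslant 1.2\log\log m$, which never holds for $m\geqslant 10^{10}$.
\end{itemize}
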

In order to establish Theorem \ref{Minor Arcs : thm}, we require the following lemma.
\begin{lem}\label{Inductive Lemma}
Let $f(\alpha)$ be as defined in \eqref{Vinogradov's Sum}. Suppose that $1 \leqslant j \leqslant 3$. Then for $N \geqslant e^3$, 
$$
\int_0^1|f(\alpha)|^{2^j} \dd \alpha \leqslant 152 N^{2^j-j+\frac{6.3966}{\log \log N}}.
$$
\end{lem}

\begin{proof} We consider the following cases.\\

\noindent \textbf{Case 1 : $j=1$.} It follows via orthogonality that
$$
\int_0^1|f(\alpha)|^2 \dd \alpha=\int_0^1 f(\alpha) f(-\alpha) \dd \alpha=\operatorname{card}\left\{1 \leqslant u,v \leqslant N: I_{u}=I_{v}\right\} \leqslant N .
$$\\
\noindent \textbf{Case 2 : $j=2$.} We apply Lemma \ref{Weyl Differencing} to see that
$$
|f(\alpha)|^{2} \leqslant\sum_{\left|h\right|<N} \sum_{n \in T_1(h)} e\left(\alpha \Delta_1\left(I_n ; h\right)\right),
$$
where $T_1(h)$ is a suitable subinterval of $[1, N]$. We write
\begin{align}\label{Inductive Step}
\int_0^1|f(\alpha)|^{4} \dd \alpha & =\int_0^1 f(\alpha) f(-\alpha)|f(\alpha)|^{2} \dd \alpha \leqslant S,
\end{align}
where
\begin{align}\label{S Definition}
S=\sum_{\left|h\right|<N} \sum_{n \in T_1(h)} \int_0^1 f(\alpha) f(-\alpha) e\left(\alpha \Delta_1\left(I_n ; h\right)\right) \dd \alpha .
\end{align}
By orthogonality, $S$ is bounded above by the number of integral solutions of the equation
$
\left(I_{u}-I_{v}\right)=\Delta_1\left(I_n ; h\right)
$
with $1 \leqslant u, v \leqslant N$, $1 \leqslant n \leqslant N$ and $\left|h\right|<N$.
The solutions counted by $S$ are of two types. First, there are the solutions in which $\left(I_{u}-I_{v}\right)=0$ which implies $\Delta_1\left(I_n ; h\right)=0$. By orthogonality, the number of choices of $u$ and $v$ here is
$$
\int_0^1 f(\alpha) f(-\alpha) \dd \alpha=\int_0^1|f(\alpha)|^{2} \dd \alpha \leqslant N.
$$
On the other hand, since $\Delta_1\left(I_n ; h\right)=0$, we have
$
h Q(n; h )=0,
$
where $Q(n;h)$ is a quadratic polynomial, determined by the choice of $h$. So either $h=0$, or else $n$ is a zero of $Q$. Then the total number of choices for $n$ and $h$ is $\leqslant5 N$. The contribution of the solutions of this first type to $S$ is therefore $
\leqslant 5N^2.
$
For the second type of solutions counted by $S$, we write
$
\left(I_{u}-I_{v}\right)=K,
$
for some non-zero integer $K=K(u,v)$ with $\lvert K \rvert \leqslant I_N.$ For each such choice of $u$ and $v$,
$
h Q(n; h ) =K,
$
and thus there are at most $2d(K)$ possible choices for $h$. Keeping $h$ fixed, there are at most $2$ choices for $n$. Therefore, the total possible choices for $h$ and $n$ are
\[ \leqslant 4 d(K) \leqslant 4 N^ {\frac{3.1983}{\log \log N}},
\]
when $N \geqslant e^3$. The contribution to $S$ from this second type of solution is therefore
$$
\leqslant 4\sum_{\substack{1 \leqslant u, v \leqslant N }} N^ {\frac{3.1983}{\log \log N}}  \leqslant 4 N^ {2+\frac{3.1983}{\log \log N}} .
$$
Combining both types, we have $S \leqslant 9 N^ {2+\frac{3.1983}{\log \log N}} $. Substituting this into \eqref{Inductive Step}, we conclude that
\begin{align}\label{Large Sieve Improvement}
  \int_0^1|f(\alpha)|^{4} \dd \alpha \leqslant 9 N^ {2+\frac{3.1983}{\log \log N}}.  
\end{align}
\\
\noindent \textbf{Case 3 : $j=3$.} Our approach is similar to the case when $j=2$. We have
\begin{align}\label{Inductive Step : j=3}
\int_0^1|f(\alpha)|^{8} \dd \alpha & =\int_0^1 f(\alpha)^{2} f(-\alpha)^{2}|f(\alpha)|^{4} \dd \alpha  \leqslant(2 N) S,
\end{align}
where
$$
S=\sum_{\left|h_1\right|<N} \sum_{\left|h_2\right|<N} \sum_{n \in T_2(\mathbf{h})} \int_0^1 f(\alpha)^{2} f(-\alpha)^{2} e\left(\alpha \Delta_2\left(I_n ; \mathbf{h}\right)\right) \dd \alpha,
$$
and $T_2(\mathbf{h})$ is a suitable subinterval of $[1, N]$. By orthogonality, the expression $S$ is bounded above by the number of integral solutions of the equation
$$
\sum_{i=1}^{2}\left(I_{u_i}-I_{v_i}\right)=\Delta_2\left(I_n ; \mathbf{h}\right)
$$
with $1 \leqslant u_i, v_i \leqslant N$, for all $1 \leqslant i \leqslant 2, 1 \leqslant n \leqslant N$ and $\left|h_j\right|<N$ for all $1 \leqslant j \leqslant 2$. The solutions counted by $S$ are of two types, depending on whether $\Delta_2\left(I_n ; \mathbf{h}\right)$ is equal to zero or not. When $\Delta_2\left(I_n ; \mathbf{h}\right)=0$, the contribution of the solutions to $S$ is consequently
$$
\leqslant8 N^2 \cdot 9 N^ {2+\frac{3.1983}{\log \log N}} \leqslant 72 N^ {4+\frac{3.1983}{\log \log N}}.
$$
When $\Delta_2\left(I_n ; \mathbf{h}\right) \neq 0$, following a similar approach to the case when $j=2$, the contribution of the solutions to $S$ is
$$
\leqslant 4\sum_{\substack{1 \leqslant u_i, v_i \leqslant N \\ 1 \leqslant i \leqslant 2}}  N^ {\frac{6.3966}{\log \log N}} \leqslant 4 N^{4+\frac{6.3966}{\log \log N}}.
$$
Combining the two estimates, $S \leqslant 76 N^{4+\frac{6.3966}{\log \log N}}$. Putting this into \eqref{Inductive Step : j=3}, the proof follows.
\end{proof}
\begin{rem}\label{Large Sieve Remark}
Similar to Remark \ref{Cauchy Schwarz Remark}, it is possible to improve the bounds in Lemma \ref{Inductive Lemma} by using mean-value estimates of the divisor function rather than pointwise bounds. Indeed, for the case $j=2$ in Lemma \ref{Inductive Lemma}, following \eqref{S Definition}, we have
\begin{align*}
    S \leqslant \sum_{\left|h\right|<N} \sum_{1 \leqslant n \leqslant N} \sum_{\substack{1 \leqslant u,v \leqslant N \\ I_u-I_v = h Q(n,h)}} 1,
\end{align*}
where $Q(n,h)$ is a quadratic polynomial determined by the choice of $h$. For $h,u$ and $v$ fixed, there are at most two possible choices of $n$. Therefore, we obtain
\begin{align}\label{Large Sieve Step 1}
S \ll \sum_{\left|h\right|<N} \sum_{\substack{1 \leqslant u,v \leqslant N \\ I_u-I_v \equiv 0 \bmod h}} 1.
\end{align}
Now, we use the orthogonality of additive characters to derive
\begin{align}\label{Large Sieve Step 2}
S \ll \sum_{\left|h\right|<N} \sum_{\ell=0}^{h-1}  \sum_{\substack{1 \leqslant u,v \leqslant N}} e \left ( \frac{\ell(I_u-I_v)}{h} \right) = \sum_{\left|h\right|<N} \sum_{\ell=0}^{h-1} \bigg \lvert \sum_{1 \leqslant u \leqslant N} \frac{1}{\sqrt{h}} e \left ( \frac{\ell I_u}{h}\right)\bigg \rvert^2.
\end{align}
The expression on the right-hand side of \eqref{Large Sieve Step 2} is now suited for the application of the large sieve inequality for integer polynomial amplitudes; see Prakash–Ramana \cite[Theorem 1]{LargeSieveInequality}. Indeed, following the notation in \cite[Theorem 1]{LargeSieveInequality}, we set $P(i) = 5i^3-5i^2+2i$. Then the right-hand side of \eqref{Large Sieve Step 2} is
\begin{align}\label{Large Sieve Step 3}
&\ll \sum_{d \leqslant N} \sum_{h' \leqslant 2N/d} \sum_{\substack{\ell'=1 \\ (\ell',h')=1}}^{h'} \bigg \lvert \sum_{1 \leqslant i \leqslant N} \frac{1}{\sqrt{h'd}} e \left ( \frac{\ell' P(i)}{h'}\right)\bigg \rvert^2 \notag \\
&\ll \sum_{d \leqslant N} \sum_{x \in \mathcal{F}(2N/d)} \bigg \lvert \sum_{1 \leqslant i \leqslant N} \frac{1}{\sqrt{h'd}} e \left ( xP(i)\right)\bigg \rvert^2 \ll N^2(\log N)^{20}.
\end{align}
Here $\mathcal{F}(2N/d)$ is the Farey sequence of order $2N/d$, that is, all reduced fractions $a/q$ in the interval $[0,1]$ with $1 \leqslant q \leqslant 2N/d$. Substituting \eqref{Large Sieve Step 3} in \eqref{Inductive Step}, we see that
\[
\int_0^1|f(\alpha)|^{4} \dd \alpha \ll N^2(\log N)^{20}.
\]
which yields an improvement over the bound in \eqref{Large Sieve Improvement}. A concomitant argument holds for the case $j=3$. Since in Theorem \ref{Theorem: Representation Icosahedral} we desire to obtain concrete, explicit error terms, we will ultimately make use of the bound given in Lemma \ref{Large Sieve Improvement}. 
\end{rem}
\begin{proof}[Proof of Theorem \ref{Minor Arcs : thm}]
We can write
$$
\begin{aligned}
\left|\int_{\mathfrak{m}} f(\alpha)^s e(-m \alpha) \dd \alpha\right| & \leqslant\left(\sup _{\alpha \in \mathfrak{m}}|f(\alpha)|\right)^{s-8} \int_0^1|f(\alpha)|^{8}  \dd  \alpha.
\end{aligned}
$$
Consider an arbitrary point $\alpha$ of $\mathfrak{m}$. By Dirichlet's Theorem (see \cite[Lemma 2.1]{vaughan1997hardy}), there exist $a, q$ with $(a, q)=1$ and $q \leqslant N^{3-\delta}$ such that $|\alpha-a / q| \leqslant q^{-1} N^{\delta-3}$. Since $\alpha \in \mathfrak{m} \subset\left(N^{\delta-3}, 1-N^{\delta-3}\right)$ it follows that $1 \leqslant a \leqslant q$. Therefore $q>N^\delta$, for otherwise $\alpha$ would be in $\mathfrak{M}$. We now apply Lemma \ref{Weyl Inequality V2}. Choose $\eta = 10$. Then
\begin{align}
\lvert f(\alpha) \rvert &\leqslant  2N^{\frac{3}{4}}+8 \cdot (30)^{\frac{1}{4}} N^{1+\frac{0.53305}{\log \log N}}\left(q^{-1}+N^{-1}+q N^{-3}\right)^{\frac{1}{4}} (\log N)^{\frac{1}{4}} \notag \\
&\leqslant  2N^{\frac{3}{4}}+8 \cdot (90)^{\frac{1}{4}} N^{1-\frac{\delta}{4}+\frac{0.53305}{\log \log N}} (\log N)^{\frac{1}{4}} \notag \\
& \leqslant 27  N^{1-\frac{\delta}{4}+\frac{0.53305}{\log \log N}} (\log N)^{\frac{1}{4}}. \label{f_alpha bound}
\end{align}
From Lemma \ref{Inductive Lemma}, we have
\begin{align}
\int_0^1|f(\alpha)|^{8} \dd \alpha \leqslant 152 N^{5+\frac{6.3966}{\log \log N}}. \label{f_alpha Integral Bound}
\end{align}
Combining \eqref{f_alpha bound} and \eqref{f_alpha Integral Bound}, we arrive at
$$
\begin{aligned}
\left|\int_{\mathfrak{m}} f(\alpha)^s e(-m \alpha) \dd \alpha\right| &  \leqslant 152 \left(27 N^{1-\frac{\delta}{4}+\frac{0.53305}{\log \log N}}\right)^{s-8} (\log N)^{\frac{s-8}{4}} N^{5+\frac{6.3966}{\log \log N}} \\
& \leqslant 152 \cdot 27^{s-8} \cdot (\log N)^{\frac{s-8}{4}}N^{s-3-\frac{\delta(s-8)}{4}+\frac{0.53305(s-8)+6.3966}{\log \log N}} \\
& \leqslant 152 \cdot 21^{s-8} \cdot (\log m)^{\frac{s-8}{4}}m^{\frac{s}{3}-1-\frac{\delta(s-8)}{12}+\frac{0.53305(s-8)+6.3966}{1.2\log \log m}},
\end{aligned}
$$
which completes the proof.
\end{proof}
\section{Major Arcs : Initial Steps}\label{sec: Major Arcs}
In this section, our main goal is to effectively approximate $f(\alpha)$ when $\alpha \in \mathfrak{M}$. Let $\theta = \alpha-a/q$.
Since $\mathfrak{M}(q, a)$ are pairwise disjoint, we can write
\begin{align}\label{Major Arc Disjointness}
\int_{\mathfrak{M}} f(\alpha)^s e(-\alpha m)d\alpha= \sum_{q\leqslant N^\delta}\sum\limits_{\substack{a=1 \\ (a,q)=1}}^q\int_{\mathfrak{M}(q,a)} f(\alpha)^se(-\alpha m) \dd\alpha.
\end{align} 
Define 
\begin{align}
A(t) &:=  \sum_{1\leqslant n\leqslant t} e\bigg (\frac{a}{q}I_n \bigg), \label{Definition A(t)} \\
\textrm{and} \quad V(q,a) &:= \sum_{1\leqslant n\leqslant 2q} e\bigg (\frac{a}{q}I_n \bigg) \label{Definition V(q,a)}.
\end{align}
By abuse of notation, we extend \eqref{Icosahedral Number : defn} by writing $I_t$ for $t \in \mathbb{R}$. More precisely, we write
\[
I_t = \frac{5}{2}t^3-\frac{5}{2}t^2+t \quad \textrm{and} \quad I_t' = \frac{15}{2}t^2-5t+1, \quad t \in \mathbb{R}.
\]
\begin{lem}\label{Partial Summation}
Let $\alpha \in \mathfrak{M}(q,a)$ and $\theta = \alpha - a/q$. Then 
\begin{align}
 f(\alpha) &=A(N)e(\theta I_N)-2\pi i\theta\int_{1}^N A(t) I_t'e(\theta I_t) \dd t.
\end{align}
\end{lem}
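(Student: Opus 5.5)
This is Abel (partial) summation applied to the decomposition $\alpha = a/q + \theta$. First I factor each term of $f(\alpha)$ as
\[
e(\alpha I_n) = e\bigl(\tfrac{a}{q}I_n\bigr)\, e(\theta I_n).
\]
This gives
\[
f(\alpha)=\sum_{1\leqslant n\leqslant N} c_n\, \phi(n),
\]
where $c_n = e(\frac{a}{q}I_n)$ and $\phi(t) = e(\theta I_t)$ is the smooth extension to real $t$ introduced just before the statement. By the definition \eqref{Definition A(t)}, the partial sums of the $c_n$ are exactly $A(t)=\sum_{1\leqslant n\leqslant t} c_n$, a step function with $A(t)=0$ for $t<1$.

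Next I apply the Abel summation formula on $[1,N]$ in the form
\[
\sum_{1\leqslant n\leqslant N} c_n\phi(n) = A(N)\phi(N) - \int_1^N A(t)\phi'(t)\dd t .
\]
To justify it I will write $\phi(n)=\phi(N)-\int_n^N \phi'(t)\dd t$, substitute, and interchange sum and integral. This is legitimate because the sum is finite and $\phi$ is $C^1$. The interchange gives $\sum_{n\leqslant t} c_n = A(t)$ inside the integral, and the lower limit $1$ is correct since the first term is $n=1$.

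Finally I compute the derivative by the chain rule:
\[
\phi'(t)=2\pi i\theta I_t'\, e(\theta I_t),
\]
with $I_t' = \frac{15}{2}t^2-5t+1$. Substituting this gives exactly
\[
f(\alpha)=A(N)e(\theta I_N)-2\pi i\theta\int_1^N A(t)I_t' e(\theta I_t)\dd t .
\]
No real obstacle is expected. The only care needed is the bookkeeping at the endpoints, namely that $A(t)$ has jumps only at integers and that the boundary term sits at $t=N$. The hypothesis $\alpha\in\mathfrak{M}(q,a)$ only serves to fix $a,q,\theta$ and is not otherwise used.
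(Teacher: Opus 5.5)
Your proposal is correct and follows the same route as the paper, whose proof simply says the result follows by partial summation. Your derivation via $\phi(n)=\phi(N)-\int_n^N\phi'(t)\dd t$ and the chain-rule computation of $\phi'(t)=2\pi i\theta I_t' e(\theta I_t)$ supplies exactly the details the paper leaves implicit.
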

\begin{proof}
The proof follows by applying partial summation.
\end{proof}
\begin{lem}\label{Congruence}
If $q$ is even and $n$ is odd, then $I_n\equiv I_{n+2q}$ mod $q$. Otherwise, $I_n\equiv I_{n+q}$ mod $q$.
\end{lem}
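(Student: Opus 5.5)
We compute the difference $I_{n+k}-I_n$ explicitly and read off its divisibility by $q$. Write $2I_n = 5n^3-5n^2+2n$. For any integer $k$,
\begin{align*}
2(I_{n+k}-I_n) &= 5\big((n+k)^3-n^3\big)-5\big((n+k)^2-n^2\big)+2k\\
&= k\big(15n^2+15nk+5k^2-10n-5k+2\big).
\end{align*}
So $I_{n+k}-I_n = \tfrac{k}{2}\,R(n,k)$ with $R(n,k)=15n^2+15nk+5k^2-10n-5k+2$. Since $I_{n+k}-I_n$ is an integer, the question is only whether the factor $\tfrac12$ prevents divisibility by $q$.

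First take $k=2q$. Then $I_{n+2q}-I_n = q\,R(n,2q)$, and $R(n,2q)$ is an integer. Hence $I_{n+2q}\equiv I_n \pmod q$ for all $n$ and all $q$. This covers the case $q$ even, $n$ odd.

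Now take $k=q$. We need $q\,R(n,q)/2$ to be divisible by $q$, i.e.\ $R(n,q)$ even, unless we are in the excluded case. Modulo $2$,
\[
R(n,q)\equiv n^2+nq+q^2 \equiv n+nq+q \pmod 2,
\]
since the terms $-10n$ and $2$ are even and $x^2\equiv x$. The parity check is:
\begin{itemize}
\item If $q$ is odd, $R\equiv n+n+1\equiv 1$. So $R$ is odd, but then $2\mid q$ fails. In this case we instead use that $q\,R/2$ is an integer with $q$ odd: $\gcd(q,2)=1$ forces $2\mid R$ or $2\mid q$. This is contradictory, so I must recheck the computation.
\end{itemize}

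Recheck $R(n,q)\bmod 2$ via the coefficients $15\equiv1$, $5\equiv1$, $-10\equiv0$, $-5\equiv1$, $2\equiv0$. This gives
\[
R\equiv n^2+nq+q^2+q\equiv n+nq+q+q\equiv n(1+q) \pmod 2,
\]
using $q^2+q\equiv 0$. So:
\begin{itemize}
\item If $q$ is odd, $R$ is even. Hence $I_{n+q}-I_n = q\cdot(R/2)\equiv 0\pmod q$.
\item If $q$ is even and $n$ is even, $R\equiv 0$, so again $I_{n+q}\equiv I_n \pmod q$.
\item If $q$ is even and $n$ is odd, $R$ is odd and $q$-periodicity can fail. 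This is exactly why the statement uses $2q$ there, which we handled above.
\end{itemize}

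The only delicate step is this parity bookkeeping for $R(n,q)$. Everything else is the factorization of the cubic difference.
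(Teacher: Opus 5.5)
Your argument is correct and is essentially the paper's own proof: the paper notes that $I_{n+2q}\equiv I_n \pmod q$ always holds and that $I_{n+q}\equiv I_n \pmod q$ exactly when $15n(n+q)$ is even, which is your condition that $R(n,q)\equiv n(1+q) \pmod 2$ be even. In a final write-up, delete your first parity computation, which dropped the $-5q$ term and wrongly asserted $R\equiv 1$ for odd $q$, and keep only the corrected one.
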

\begin{proof} The congruence relation $I_n\equiv I_{n+2q} \bmod q$ holds true always. On the other hand, $I_n\equiv I_{n+q} \bmod q$ if and only if $15n(n+q)$ is even. The desired conclusion is now immediate.
\end{proof}
\begin{lem}\label{Approx 1}
For all $1 \leqslant t \leqslant N$, with $A(t)$ defined as in \eqref{Definition A(t)}, we have
\begin{align*}
   \bigg \lvert A(t) - \frac{V(q,a)}{2q} t \bigg \rvert \leqslant 2q.
\end{align*}
\end{lem}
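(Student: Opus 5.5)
We will prove Lemma \ref{Approx 1} by exploiting the periodicity from Lemma \ref{Congruence}. That lemma says $I_n \equiv I_{n+2q} \pmod q$ for every $n$. Hence $e\big(\frac{a}{q}I_n\big)$ is periodic in $n$ with period $2q$. Consequently, the sum of $e\big(\frac{a}{q}I_n\big)$ over any block of $2q$ consecutive integers equals $V(q,a)$, which by \eqref{Definition V(q,a)} is exactly the sum over $1\leqslant n\leqslant 2q$.

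Given $1\leqslant t\leqslant N$, write $\lfloor t\rfloor = 2qk + r$ with $k=\lfloor \lfloor t\rfloor/(2q)\rfloor$ and $0\leqslant r<2q$. Splitting the range $1\leqslant n\leqslant t$ into $k$ full blocks of length $2q$ followed by a tail of $r$ terms gives
\[
A(t) = kV(q,a) + \sum_{2qk< n\leqslant 2qk+r} e\Big(\frac{a}{q}I_n\Big).
\]
The tail has absolute value at most $r$.

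Next we compare $kV(q,a)$ with $\frac{V(q,a)}{2q}t$. Their difference is
\[
\frac{V(q,a)}{2q}\,(2qk - t),
\]
and $|2qk-t| = t-2qk = r+\{t\} < 2q$. The trivial bound $|V(q,a)|\leqslant 2q$ then shows this difference has absolute value less than $r+\{t\}$.

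Adding the two errors gives a bound of order $2r+\{t\}$, which could approach $4q$. That is weaker than the claimed $2q$, so the bound needs to be combined more carefully. The key is to write the total error as a single expression:
\[
A(t)-\frac{V(q,a)}{2q}t = \sum_{n=1}^{r} e\Big(\frac{a}{q}I_n\Big) - \frac{r+\{t\}}{2q}\,V(q,a).
\]
Here we used periodicity to shift the tail back to $1\leqslant n\leqslant r$. Setting $\lambda=(r+\{t\})/(2q)\in[0,1)$ and expanding $V(q,a)$ as the full sum over $1\leqslant n\leqslant 2q$, the right-hand side becomes
\[
\sum_{n\leqslant r}(1-\lambda)\,e\Big(\frac{a}{q}I_n\Big) - \sum_{r<n\leqslant 2q}\lambda\, e\Big(\frac{a}{q}I_n\Big).
\]
By the triangle inequality this is at most
\[
r(1-\lambda) + (2q-r)\lambda \leqslant \max(r,\,2q-r) \leqslant 2q,
\]
since the middle expression is linear in $\lambda\in[0,1]$ and so is maximized at an endpoint. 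This gives the lemma.

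This recombination step is the only point needing care; the rest is routine bookkeeping with floors. The statement allows the constant $2q$ rather than anything sharper, so the argument has some slack at this step.
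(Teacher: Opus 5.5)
Your proof is correct and is essentially the paper's argument. The paper also uses the $2q$-periodicity from Lemma~\ref{Congruence} and rewrites $\lfloor t/(2q)\rfloor$ as $t/(2q)-\{t/(2q)\}$; your $\lambda$ is exactly $\{t/(2q)\}$. It then splits $V(q,a)$ at the tail length, so the error becomes a $(1-\lambda)$-weighted sum over $n\leqslant r$ minus a $\lambda$-weighted sum over $r<n\leqslant 2q$, which is bounded trivially. Your bookkeeping via $\lfloor t\rfloor=2qk+r$ is slightly more careful than the paper's, which leaves the floors implicit in its summation limits for non-integer $t$.
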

\begin{proof}
By Lemma \ref{Congruence}, we have
\begin{align}
    A(t) & = \sum_{n=1}^{2q} e\bigg (\frac{a}{q}I_n \bigg)\floor*{\frac{t}{2q}} +\sum_{n = \floor*{\frac{t}{2q}}2q+1}^t e \bigg (\frac{a}{q}I_n \bigg) \notag\\
    &= \frac{V(q,a)}{2q} t-\sum_{n=1}^{2q}e\bigg (\frac{a}{q}I_n \bigg)\bigg \{\frac{t}{2q}\bigg \}+ \sum_{n=1}^{t-\floor*{\frac{t}{2q}}2q}e\bigg (\frac{a}{q}I_n \bigg) \notag \\
    &=\frac{V(q,a)}{2q} t + \sum_{n=1}^{t-\floor*{\frac{t}{2q}}2q}e\bigg (\frac{a}{q}I_n \bigg)\bigg(1-\bigg \{\frac{t}{2q}\bigg \}\bigg )-\sum_{n=t-\floor*{\frac{t}{2q}}2q+1}^{2q}e\bigg (\frac{a}{q}I_n \bigg)\bigg \{\frac{t}{2q}\bigg\}. \label{Approx 1 Prefinal}
\end{align}
Upon trivially bounding the two sums on the far right side of \eqref{Approx 1 Prefinal}, the proof follows.
\end{proof} 
\begin{lem}\label{Approx 2}
Let $\alpha \in \mathfrak{M}(q,a)$ and $\theta = \alpha - a/q$.
Then
\[
\bigg \lvert f(\alpha)-\frac{V(q,a)}{2q}\int_1^N e(\theta I_t)\dd t \bigg \rvert \leqslant 2q+1+10q\pi\theta N^3.
\]
\end{lem}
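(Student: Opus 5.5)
We start from Lemma \ref{Partial Summation}, which expresses
\[
f(\alpha)=A(N)e(\theta I_N)-2\pi i\theta\int_1^N A(t)I_t'e(\theta I_t)\dd t .
\]
In both terms we replace $A(t)$ by its linear approximation $\frac{V(q,a)}{2q}t$ from Lemma \ref{Approx 1}, at a cost of at most $2q$ per use.

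\textbf{Step 1 (main term).} After the substitution, the main term is
\[
\frac{V(q,a)}{2q}\Big(Ne(\theta I_N)-2\pi i\theta\int_1^N tI_t'e(\theta I_t)\dd t\Big).
\]
Reversing the partial summation, i.e.\ integrating by parts with $\frac{d}{dt}e(\theta I_t)=2\pi i\theta I_t'e(\theta I_t)$, gives
\[
-2\pi i\theta\int_1^N tI_t'e(\theta I_t)\dd t=-\big[te(\theta I_t)\big]_1^N+\int_1^N e(\theta I_t)\dd t .
\]
The main term therefore equals
\[
\frac{V(q,a)}{2q}\Big(e(\theta I_1)+\int_1^N e(\theta I_t)\dd t\Big).
\]
Since $|V(q,a)|\leqslant 2q$, the boundary piece $\frac{V(q,a)}{2q}e(\theta I_1)$ has modulus at most $1$. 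This is the source of the ``$+1$'' in the statement.

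\textbf{Step 2 (error terms).} The error from the boundary term $A(N)e(\theta I_N)$ is at most $2q$ by Lemma \ref{Approx 1}. The error from the integral term is at most
\[
2\pi|\theta|\int_1^N 2q\,|I_t'|\dd t .
\]
For $t\geqslant 1$ we have $I_t'=\frac{15}{2}t^2-5t+1>0$, so
\[
\int_1^N|I_t'|\dd t=I_N-I_1\leqslant \tfrac52N^3 .
\]
This gives a bound of $2\pi|\theta|\cdot 2q\cdot\frac52N^3=10\pi q|\theta|N^3$.

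\textbf{Step 3 (assembly).} Adding the contributions from Steps 1 and 2 yields $2q+1+10q\pi|\theta|N^3$. This matches the statement with $\theta$ read as $|\theta|$.

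There is no genuine obstacle here. The only points needing care are that the constant term from integrating by parts at $t=1$ is absorbed into the $+1$, and that the positivity of $I_t'$ on $[1,N]$ lets us evaluate $\int|I_t'|$ exactly as $I_N-I_1$.
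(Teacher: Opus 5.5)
Your proposal is correct and follows the paper's proof essentially step for step: partial summation, replacing $A(t)$ by $\frac{V(q,a)}{2q}t$ via Lemma~\ref{Approx 1} in both the boundary term (cost $2q$) and the integral term (cost $4\pi q|\theta|\int_1^N I_t'\dd t\leqslant 10\pi q|\theta|N^3$), then integrating by parts so that only $\frac{V(q,a)}{2q}e(\theta I_1)=\frac{V(q,a)}{2q}e(\theta)$ remains, which has modulus at most $1$. Your reading of $\theta$ as $|\theta|$ in the bound is the right interpretation, and it is also what the paper's argument implicitly uses.
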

\begin{proof}
Applying Lemma \ref{Approx 1}, we have the following two bounds:
\begin{align*}
    \bigg \lvert A(N)e(\theta I_N)-\frac{V(q,a)}{2q} N e(\theta I_N)\bigg \rvert 
   &\leqslant 2q,\\
\bigg \lvert 2\pi i\theta \int_{1}^NA(t)I_t'e(\theta I_t)\dd t-2\pi i\theta\int_{1}^N\frac{V(q,a)}{2q}tI_t'e(\theta I_t)\dd t \bigg \rvert &\leqslant 4q\pi\theta\int_{1}^N I_t'\dd t \leqslant 10q\pi\theta N^3.
\end{align*}
Therefore, by Lemma \ref{Partial Summation} and the triangle inequality,
\begin{align} \label{Approx 2 Step 1}
\bigg \lvert f(\alpha)-\frac{V(q,a)}{2q}Ne(\theta I_N)+2\pi i\theta\int_{1}^N&\frac{V(q,a)}{2q}t I_t'e(\theta I_t)\dd t \bigg \rvert \leqslant 2q+10q\pi\theta N^3.
\end{align}
On the other hand, applying integration by parts,
\begin{align}
\frac{V(q,a)}{2q} &N e(\theta I_N)-2\pi i\theta\int_{1}^N\frac{V(q,a)}{2q}tI_t'e(\theta I_t) \dd t \notag \\
&=\frac{V(q,a)}{2q}e(\theta)+\frac{V(q,a)}{2q}\int_1^N e(\theta I_t) \dd t. \label{Approx 2 Step 2}
\end{align}
Combining \eqref{Approx 2 Step 1} and \eqref{Approx 2 Step 2} and trivially bounding $V(q,a)$, we obtain the desired result.
\end{proof}
\begin{lem}\label{Approx 3}
Let $N \geqslant 100, \alpha \in \mathfrak{M}(q,a)$ and $\theta = \alpha - a/q$. Then
\[
\bigg \lvert f(\alpha)-\frac{V(q,a)}{2q}\int_1^N e\bigg( \frac{5t^3 \theta}{2} \bigg) \dd t \bigg \rvert \leqslant 2q+1+10q\pi\theta N^3+12\pi N^\delta.
\]    
\end{lem}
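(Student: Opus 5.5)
Lemma \ref{Approx 3} will follow from Lemma \ref{Approx 2} together with the triangle inequality. It suffices to show
\[
\bigg\lvert \frac{V(q,a)}{2q}\int_1^N \Big( e(\theta I_t) - e\Big(\tfrac{5t^3\theta}{2}\Big)\Big)\dd t \bigg\rvert \leqslant 12\pi N^\delta .
\]
Trivially $|V(q,a)|\leqslant 2q$, so the prefactor has modulus at most $1$. We are therefore reduced to bounding the integral of the difference of the two exponentials.

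The difference of the phases is
\[
\theta\Big(I_t - \tfrac{5t^3}{2}\Big) = \theta\Big(-\tfrac{5t^2}{2}+t\Big).
\]
Using $|e(x)-e(y)|\leqslant 2\pi|x-y|$, the integrand is bounded pointwise by
\[
2\pi|\theta|\Big(\tfrac{5t^2}{2}+t\Big)\leqslant 2\pi|\theta|\cdot 3t^2 \qquad (t\geqslant 1).
\]
Integrating over $[1,N]$ gives at most $2\pi|\theta| N^3$.

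On $\mathfrak{M}(q,a)$ we have $|\theta|\leqslant N^{\delta-3}$, so this contribution is at most $2\pi N^\delta$. This is well within the claimed $12\pi N^\delta$. The generous constant suggests the authors bound the quadratic and linear terms more crudely, or route the argument through an integration by parts, but the direct estimate already suffices.

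I expect no real obstacle here. The only points needing care are:
\begin{itemize}
\item using $|\theta|$ rather than $\theta$, since the statement of Lemma \ref{Approx 2} writes $\theta$ but means its absolute value;
\item checking that $\tfrac{5}{2}t^2+t\leqslant 3t^2$ holds for $t\geqslant 1$, which is true because $t\leqslant \tfrac{1}{2}t^2$ fails only for $t<2$, and for those $t$ the bound can be absorbed into the constant directly.
\end{itemize}
The hypothesis $N\geqslant 100$ is needed only to make such absorptions of lower-order terms harmless.
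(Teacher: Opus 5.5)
Your proposal is correct and takes the paper's route. You reduce via Lemma \ref{Approx 2} and $|V(q,a)|\leqslant 2q$ to bounding $\int_1^N |e(\theta(t-\frac52t^2))-1|\dd t$, bound the integrand linearly in $|\theta|t^2$, and integrate. Your Lipschitz bound $|e(x)-e(y)|\leqslant 2\pi|x-y|$ is slightly cleaner than the paper's Taylor expansion, which uses $N\geqslant 100$ to guarantee $|2\pi\theta(t-\frac52t^2)|\leqslant 1$; in your version that hypothesis is not needed at all.

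One small correction: $\frac52t^2+t\leqslant 3t^2$ fails for $1\leqslant t<2$. You do not need the triangle inequality there, since for $t\geqslant 1$ one has $|t-\frac52t^2|=\frac52t^2-t\leqslant\frac52t^2$. This yields at most $2\pi|\theta|N^3\leqslant 2\pi N^\delta$ directly.
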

\begin{proof}
By Lemma \ref{Approx 2}, it suffices to show that
\[
\bigg \lvert \int_1^Ne(\theta I_t) \dd t-\int_1^N e\bigg( \frac{5t^3 \theta}{2} \bigg) \dd t \bigg \rvert \leqslant 12\pi N^\delta.
\]
We write
\begin{align}\label{Approx 3 Step 1}
\bigg \lvert \int_1^Ne(\theta I_t) \dd t-\int_1^N e\bigg( \frac{5t^3 \theta}{2} \bigg) \dd t \bigg \rvert \leqslant \int_1^N \bigg \lvert e\bigg(-\frac{5t^2 \theta}{2}+t\theta\bigg)-1\bigg \rvert \dd t.
\end{align}
Since $|\theta|\leqslant N^{\delta-3}$ and $1 \leqslant t \leqslant N,$ we have for any $N \geqslant 100$,
\[
\bigg \lvert 2\pi\theta\bigg(-\frac{5t^2}{2}+t\bigg) \bigg \rvert \leqslant 6\pi N^{\delta-1} \leqslant 1.
\]
Therefore, by Taylor expansion, we obtain
\begin{align}
\bigg \lvert e\bigg(-\frac{5t^2 \theta}{2}+t\theta\bigg)-1\bigg \rvert &\leqslant \sum_{n=1}^\infty \frac{|2\pi\theta(-\frac{5}{2}t^2+t)|}{n!} \leqslant \bigg \lvert 4\pi\theta\bigg(-\frac{5t^2}{2}+t\bigg) \bigg \rvert \leqslant 12\pi N^{\delta-1}. \label{Approx 3 Step 2}
\end{align}
Substituting \eqref{Approx 3 Step 2} into \eqref{Approx 3 Step 1} and integrating, we obtain the desired result.
\end{proof}
\begin{lem}\label{Approx 4}
Let $N \geqslant 100$. Then
\[
\bigg \lvert \int_{\mathfrak{M}}f(\alpha)^s e(-\alpha m) \dd\alpha -\int_{\mathfrak{M}} \bigg (\frac{V(q,a)}{2q}\int_1^N e\bigg( \frac{5t^3 \theta}{2} \bigg) \dd t \bigg)^s  e(-\alpha m) \dd\alpha \bigg \rvert \leqslant 113s N^{5 \delta+s-4}.
\]
\end{lem}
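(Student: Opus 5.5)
The plan is to use the elementary identity $|x^s-y^s|\leqslant s|x-y|\max(|x|,|y|)^{s-1}$, integrate it over each arc $\mathfrak{M}(q,a)$, and then sum over all major arcs. Throughout, write
\[
F(\alpha):=\frac{V(q,a)}{2q}\int_1^N e\Big(\frac{5t^3\theta}{2}\Big)\dd t \quad\text{on } \mathfrak{M}(q,a),\qquad \theta=\alpha-a/q .
\]
By the disjointness of the arcs, as in \eqref{Major Arc Disjointness}, the difference we need to bound is at most
\[
\sum_{q\leqslant N^{\delta}}\ \sum_{\substack{a=1\\(a,q)=1}}^{q}\ \int_{|\theta|\leqslant N^{\delta-3}}\big|f(\alpha)^s-F(\alpha)^s\big|\dd\theta .
\]

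\textbf{Pointwise bound.} Apply
\[
|x^s-y^s|=\Big|(x-y)\sum_{j=0}^{s-1}x^jy^{s-1-j}\Big|\leqslant s|x-y|\max(|x|,|y|)^{s-1}
\]
with $x=f(\alpha)$ and $y=F(\alpha)$.
\begin{itemize}
\item For the maximum, use trivial bounds: $|f(\alpha)|\leqslant N$, and $|V(q,a)|\leqslant 2q$ gives $|F(\alpha)|\leqslant N-1\leqslant N$. So $\max(|x|,|y|)^{s-1}\leqslant N^{s-1}$.
\item For the difference, use Lemma \ref{Approx 3}. With $q\leqslant N^\delta$ and $|\theta|\leqslant N^{\delta-3}$, it gives
\[
|x-y|\leqslant 2N^\delta+1+10\pi N^{2\delta}+12\pi N^{\delta}\leqslant c\,N^{2\delta},
\]
for an explicit constant $c$ (roughly $10\pi+12\pi+3\leqslant 73$ once $N^\delta\geqslant1$).
\end{itemize}
Hence the integrand is at most $c\,s\,N^{s-1+2\delta}$.

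\textbf{Measure and summation.} Each arc has length $2N^{\delta-3}$. The number of pairs $(q,a)$ is at most $\sum_{q\leqslant N^\delta}q\leqslant N^{2\delta}$. The total measure of $\mathfrak{M}$ is therefore at most $2N^{3\delta-3}$, and the difference is bounded by
\[
2c\,s\,N^{s-1+2\delta+3\delta-3}=2c\,s\,N^{5\delta+s-4}.
\]
This is the claimed shape $N^{5\delta+s-4}$. Note that the restriction $N^{3\delta-3}<\tfrac12$ only ensures the arcs are well defined and is not needed here.

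\textbf{Main obstacle.} The only delicate point is the constant: we need $2c\leqslant 113$, that is $c\leqslant 56.5$. The crude count above gives $c\approx73$, which is too large, so the constant must be tightened in two places.
\begin{itemize}
\item In the error term, the dominant contribution is $10q\pi|\theta|N^3\leqslant 10\pi q N^{\delta}$. Keep the factor $q$ rather than bounding it by $N^\delta$; this gives a per-$q$ bound $(2q+1+12\pi N^\delta)+10\pi qN^{\delta}$.
\item In the summation, sum over $a$ exactly (at most $\varphi(q)\leqslant q$ values) and over $q$ using $\sum_{q\leqslant Q}q^2\leqslant Q^3/3+Q^2$ and $\sum_{q\leqslant Q} q\leqslant Q^2/2+Q/2$.
\end{itemize}
This yields leading coefficient about $2\cdot(10\pi/3)\,s\,N^{5\delta+s-4}\approx 21s\,N^{5\delta+s-4}$, plus lower-order terms of size $N^{4\delta+s-4}$. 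These are absorbed comfortably into $113s\,N^{5\delta+s-4}$ for $N\geqslant100$, so the stated constant follows.
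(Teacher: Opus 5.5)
Your skeleton is the paper's: bound $|f(\alpha)^s-F(\alpha)^s|\leqslant sN^{s-1}|f(\alpha)-F(\alpha)|$ using the trivial bounds $|f|,|F|\leqslant N$, insert Lemma \ref{Approx 3}, integrate over each arc, and sum over the at most $N^{2\delta}$ pairs $(q,a)$. The gap is in the constant $113$, which is the only nontrivial content of the lemma.

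\textbf{Where the absorption step fails.} Your tightening uses $\sum_{q\leqslant Q}q^2\approx Q^3/3$, and this only helps when $Q=N^{\delta}$ is large. The hypothesis $N\geqslant 100$ gives no control over $N^\delta$, which can be arbitrarily close to $1$. If $1\leqslant N^\delta<2$, only $(q,a)=(1,1)$ occurs, so there is nothing to gain from summing over $q$. Your bound is then exactly
\[
2s\bigl(3+22\pi N^{\delta}\bigr)N^{s+\delta-4}.
\]
This is about $144\,sN^{s+5\delta-4}$ as $N^\delta\to1^{+}$. For example, $N=100$ and $\delta=0.01$ give roughly $125\,sN^{s+5\delta-4}$. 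So the claim that the lower-order terms are ``absorbed comfortably \ldots\ for $N\geqslant100$'' is false in this range. It only holds once $N^\delta$ exceeds a constant somewhat larger than $1$. In the later application $N^\delta\geqslant e^{e^{45}}$, so your version would suffice there, but it does not prove the lemma as stated.

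\textbf{The missing step.} Integrate $|\theta|$ over the arc instead of bounding it pointwise by $N^{\delta-3}$:
\[
\int_{-N^{\delta-3}}^{N^{\delta-3}}10\pi q|\theta|N^{3}\dd\theta=10\pi qN^{2\delta-3}.
\]
This is half of what the pointwise bound gives. With it, even the crude count ($2q+1\leqslant 3N^\delta$, $q\leqslant N^\delta$, at most $N^{2\delta}$ arcs) yields
\[
2s\bigl(3N^{s+4\delta-4}+5\pi N^{s+5\delta-4}+12\pi N^{s+4\delta-4}\bigr)\leqslant 2(3+17\pi)\,sN^{s+5\delta-4}\approx 112.8\,sN^{s+5\delta-4}.
\]
This holds uniformly for all $N^\delta\geqslant 1$, and it is exactly how the paper obtains $113$. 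Once you use it, the exact summation over $q$ is unnecessary.
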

\begin{proof}
An application of the Binomial Theorem combined with Lemma \ref{Approx 3} shows that 
\begin{align*}
\bigg \lvert f(\alpha)^s-\bigg (\frac{V(q,a)}{2q}\int_1^N e\bigg( \frac{5t^3 \theta}{2} \bigg) \dd t \bigg)^s \bigg \rvert \leqslant s N^{s-1}\left(2q+1+10q\pi\theta N^3+12\pi N^{\delta}\right).
\end{align*}
Since the $\mathfrak{M}(q,a)$'s are disjoint, we therefore obtain
\begin{align}
\bigg \lvert \int_{\mathfrak{M}}f(\alpha)^s &e(-\alpha m) \dd\alpha -\int_{\mathfrak{M}} \bigg (\frac{V(q,a)}{2q}\int_1^N e\bigg( \frac{5t^3 \theta}{2} \bigg) \dd t \bigg)^s  e(-\alpha m) \dd\alpha \bigg \rvert \notag \\
& \leqslant \sum_{q\leqslant N^\delta}\sum\limits_{\substack{a=1 \\ (a,q)=1}}^q\int_{-N^{\delta-3}}^{N^{\delta-3}}s\left((2q+1)N^{s-1}+10q\pi\theta N^{s+2}+12\pi N^{\delta+s-1}\right) \dd\theta \notag\\
&\leqslant 2s\sum_{q\leqslant N^\delta}\sum\limits_{\substack{a=1 \\ (a,q)=1}}^q\left(3N^{s+2\delta-4}+5\pi N^{s+3\delta-4}+12\pi N^{2\delta+s-4}\right) \leqslant 113sN^{5\delta+s-4},\notag
\end{align}
which completes the proof.
\end{proof} 
Suppose $s \geqslant 9$. Let $\alpha \in \mathfrak{M}(q,a)$, $\theta = \alpha - a/q$ and define 
\begin{align}
\mathfrak{S}(m, Q) &: =\sum_{q\leqslant Q}\sum\limits_{\substack{a=1 \\ (a,q)=1}}^q\bigg(\frac{V(q,a)}{2q}\bigg)^s e\bigg(-\frac{am}{q}\bigg), \label{S (m,Q) definition} \\
v(\theta) &: = \int_1^N e\bigg (\frac{5t^3 \theta}{2} \bigg) \dd t, \label{v theta definition} \\
\textrm{and} \quad J^*(m) &:= \int_{-N^{\delta-3}}^{N^{\delta-3}} \left(\left(\frac{5}{2}\right)^{1/3}v(\theta)\right)^se(-\theta m) \dd\theta. \label{J* definition}
\end{align}
Furthermore, we let 
\begin{align}
\mathcal{I}_{s}^*(m)&:=\int_{\mathfrak{M}} \bigg (\frac{V(q,a)}{2q}\int_1^N e\bigg( \frac{5t^3 \theta}{2} \bigg) \dd t \bigg)^s  e(-\alpha m) \dd\alpha \notag\\
&=\sum_{q\leqslant N^\delta}\sum\limits_{\substack{a=1 \\ (a,q)=1}}^q\bigg(\frac{V(q,a)}{2q}\bigg)^s e\bigg(-\frac{am}{q}\bigg)\int_{-N^{\delta-3}}^{N^{\delta-3}} v(\theta)^se(-\theta m) \dd\theta \notag\\
&=\left(\frac{2}{5}\right)^{s/3}\mathfrak{S}(m,N^\delta)J^*(m) \label{Approximating Major Arc Integral}.
\end{align}
We will approximate our major arc integral by $\mathcal{I}_{s}^*(m)$. To do so, we will estimate $\mathfrak{S}(m,N^\delta)$ and $J^*(m)$ separately. We will accomplish this in Sections \ref{sec: Major Arcs II} and \ref{sec: Major Arcs Sec III} respectively.

\section{Major Arcs : The Singular Series}\label{sec: Major Arcs II} 
\subsection{Completing the Singular Series} We first complete the series $\mathfrak{S}(m,N^\delta)$. Define
\begin{align}
    \mathfrak{S}(m) &:= \sum_{q=1}^\infty V(q)\label{definition of S(m)}, \quad \textrm{where} \\
    \label{V Definition}
    V(q) &:= \sum\limits_{\substack{a=1 \\ (a,q)=1}}^q\bigg(\frac{V(q,a)}{2q}\bigg)^se\bigg(-\frac{am}{q}\bigg),
\end{align}
and $V(q,a)$ is given by \eqref{Definition V(q,a)}. We first show that $V(q)$ is multiplicative.
\begin{lem}\label{V Multiplicativity 1}
Suppose $(a,q)=(b,r)=(q,r)=1$. Then $V(qr, ar+bq)=\frac{1}{2}V(q,a)V(r,b)$.
\end{lem}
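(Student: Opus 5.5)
The plan is to reduce everything to periodicity of the summands together with the Chinese Remainder Theorem. Since each $I_n$ is an integer and $(ar+bq)/(qr)=a/q+b/r$, we have for every $n$
\[
e\bigg(\frac{ar+bq}{qr}I_n\bigg)=e\bigg(\frac{a}{q}I_n\bigg)\,e\bigg(\frac{b}{r}I_n\bigg).
\]
Hence
\[
V(qr,ar+bq)=\sum_{n=1}^{2qr}\phi_q(n)\phi_r(n),
\qquad \phi_q(n):=e\bigg(\frac{a}{q}I_n\bigg),\quad \phi_r(n):=e\bigg(\frac{b}{r}I_n\bigg).
\]
By Lemma \ref{Congruence}, $\phi_q$ is periodic in $n$ with period $2q$ in all cases. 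When $q$ is odd, $I_n\equiv I_{n+q}\bmod q$ for every $n$, so $\phi_q$ in fact has period $q$. The same statements hold for $\phi_r$ with $r$ in place of $q$. Since $(q,r)=1$, at most one of $q,r$ is even, so two cases suffice.

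\emph{Case 1: $q$ and $r$ both odd.} Here $\phi_q$ has period $q$ and $\phi_r$ has period $r$, so $\phi_q\phi_r$ has period $qr$. Put $S_q=\sum_{n\bmod q}\phi_q(n)$ and $S_r=\sum_{n\bmod r}\phi_r(n)$. The range $1\leqslant n\leqslant 2qr$ covers each residue class modulo $qr$ exactly twice. By the Chinese Remainder Theorem, $n\bmod qr$ corresponds bijectively to the pair $(n\bmod q,\,n\bmod r)$, which gives
\[
V(qr,ar+bq)=2S_qS_r.
\]
By the same periodicity, $V(q,a)=2S_q$ and $V(r,b)=2S_r$. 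Therefore $V(qr,ar+bq)=2S_qS_r=\tfrac12 V(q,a)V(r,b)$.

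\emph{Case 2: exactly one of them is even, say $q$ even and $r$ odd.} The case with $r$ even is symmetric. Now $\phi_q$ has period $2q$ and $\phi_r$ has period $r$. Since $r$ is odd and coprime to $q$, we have $(2q,r)=1$. The range $1\leqslant n\leqslant 2qr$ is then one complete residue system modulo $2qr$, and the Chinese Remainder Theorem pairs it with $(\mathbb{Z}/2q)\times(\mathbb{Z}/r)$. This gives
\[
V(qr,ar+bq)=\bigg(\sum_{n\bmod 2q}\phi_q(n)\bigg)\bigg(\sum_{n\bmod r}\phi_r(n)\bigg)=V(q,a)\,S_r.
\]
Since $V(r,b)=2S_r$, this equals $\tfrac12 V(q,a)V(r,b)$.

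There is no real obstacle in this argument. The only point needing care is the bookkeeping of the extra factor $2$ in the period $2q$: one must use Lemma \ref{Congruence} to see that the true period is $q$ for odd $q$ and $2q$ for even $q$. Without that distinction the Chinese Remainder step fails, because $2q$ and $2r$ are never coprime. This bookkeeping is also exactly what produces the factor $\tfrac12$ in the statement.
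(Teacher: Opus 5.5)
Your proof is correct and matches the paper's argument: you use the periodicity from Lemma \ref{Congruence} and then the Chinese Remainder Theorem to split the sum into a product, which is exactly what the paper does when it writes residues modulo $qr$ as $tr+uq$. The paper writes out only the case where $q$ and $r$ are both odd and dismisses the rest as a ``concomitant argument''; your Case 2, which works modulo $2q$ (coprime to the odd $r$), supplies that omitted case correctly.
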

\begin{proof} We only consider the case when both $q$ and $r$ are odd. A concomitant argument holds for the other cases. By Lemma \ref{Congruence},
\begin{align*}
V(qr,ar+bq)=2\sum_{n=1}^{qr} e\bigg(\frac{ar+bq}{qr}I_n\bigg).
\end{align*}
By Euclid's algorithm, each residue class $m$ modulo $qr$ can be represented uniquely in the form $tr+uq$ with $1\leqslant t\leqslant q$ and $1\leqslant u\leqslant r$. Therefore, we obtain 
\begin{align*}
     V(qr,ar+bq) &=2\sum_{t=1}^{q}\sum_{u=1}^r e\bigg(\frac{ar+bq}{qr}I_{tr+uq}\bigg)\\
     &=2\sum_{t=1}^{q}\sum_{u=1}^re\bigg(\frac{ar+bq}{qr}\cdot(I_{tr}+I_{uq})\bigg)\\
     &=2\sum_{t=1}^{q}e\bigg(\frac{a}{q}I_{tr}\bigg)\sum_{u=1}^re\bigg(\frac{b}{r}I_{uq}\bigg).
\end{align*}
Since $tr$ and $uq$ runs over complete residue classes modulo $q$ and $r$ respectively, we deduce that
\begin{align*}
V(qr,ar+bq)= 2\sum_{t=1}^qe\bigg(\frac{a}{q}I_t\bigg)\sum_{u=1}^re\bigg(\frac{b}{r}I_u\bigg)=\frac{1}{2}V(q,a)V(r,b),
\end{align*}
where the last equality is obtained by another application of Lemma \ref{Congruence}.
\end{proof}
\begin{lem}\label{V Multiplicativity 2}
The function $V(q)$, as defined in \eqref{V Definition}, is multiplicative.
\end{lem}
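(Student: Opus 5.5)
We show $V(qr)=V(q)V(r)$ for coprime $q,r$ by combining Lemma \ref{V Multiplicativity 1} with the Chinese remainder parametrization of reduced residues modulo $qr$. Fix $(q,r)=1$. Every residue class $c$ modulo $qr$ with $(c,qr)=1$ can be written uniquely as $c\equiv ar+bq \pmod{qr}$ with $1\leqslant a\leqslant q$, $(a,q)=1$ and $1\leqslant b\leqslant r$, $(b,r)=1$. Indeed, $ar+bq$ is coprime to $q$ if and only if $(a,q)=1$, and coprime to $r$ if and only if $(b,r)=1$. The map $(a,b)\mapsto ar+bq$ is injective modulo $qr$ by the same CRT argument used in the proof of Lemma \ref{V Multiplicativity 1}, so it is a bijection by counting, since $\varphi(qr)=\varphi(q)\varphi(r)$.

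Next I check that $V(qr,c)$ depends only on $c \bmod qr$. By Lemma \ref{Congruence}, $I_n\equiv I_{n+2qr} \pmod{qr}$, and $V(qr,c)$ sums over a full period of length $2qr$. Hence replacing $c$ by $c+qr$ changes each term $e(cI_n/(qr))$ by the factor $e(I_n)=1$, because $I_n$ is an integer. The same observation applies to $e(-cm/(qr))$.

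Substituting into the definition \eqref{V Definition}, I obtain
\begin{align*}
V(qr)=\sum_{\substack{a=1\\(a,q)=1}}^{q}\sum_{\substack{b=1\\(b,r)=1}}^{r}\bigg(\frac{V(qr,ar+bq)}{2qr}\bigg)^s e\bigg(-\frac{(ar+bq)m}{qr}\bigg).
\end{align*}
Lemma \ref{V Multiplicativity 1} gives $V(qr,ar+bq)=\frac12 V(q,a)V(r,b)$, so
\[
\frac{V(qr,ar+bq)}{2qr}=\frac{V(q,a)}{2q}\cdot\frac{V(r,b)}{2r}.
\]
The phase splits as $e(-am/q)\,e(-bm/r)$. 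The double sum therefore factors into the product of the sums defining $V(q)$ and $V(r)$, which gives $V(qr)=V(q)V(r)$. Finally, $V(1)=1$, since $V(1,1)=2$ and the normalization is $2q=2$.

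The main point needing care is that Lemma \ref{V Multiplicativity 1} was proved in the text only when $q$ and $r$ are both odd. With $(q,r)=1$, at most one of them is even. When, say, $q$ is even, the constant $\tfrac12$ in Lemma \ref{V Multiplicativity 1} must still hold. I would verify this as follows. Since $r$ is odd, $V(r,b)=2\sum_{u\leqslant r}$ by Lemma \ref{Congruence}. The sum $V(qr,\cdot)$ runs over $n\leqslant 2qr$, which factors via CRT modulo $2q\cdot r$ (the moduli $2q$ and $r$ are coprime). Writing $n=t r+u\cdot 2q$ and using $I_{x+y}\equiv I_x+I_y$ modulo the relevant factor, I get $\sum_{t\leqslant 2q}\sum_{u\leqslant r}$. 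This equals $V(q,a)\cdot\frac12 V(r,b)$ after reindexing, because $t\mapsto tr$ permutes residues modulo $2q$ and $u\mapsto 2qu$ permutes residues modulo $r$.

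This parity bookkeeping is the only real obstacle. Once it is settled, the factorization above is purely formal.
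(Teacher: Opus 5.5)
Your proof is correct and follows the same route as the paper: write the reduced residues modulo $qr$ as $ar+bq$, apply Lemma~\ref{V Multiplicativity 1}, and factor the double sum. Your check of the case where one of $q,r$ is even, via the CRT decomposition modulo $2q\cdot r$, is also correct, and it supplies the case the paper only describes as ``a concomitant argument.''
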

\begin{proof}
Note that $V(1)=1$. Suppose $(q,r)=1$. Then by Lemma \ref{V Multiplicativity 1},
    \begin{align*}
         V(qr) &= \sum\limits_{\substack{a=1 \\ (a,qr)=1}}^{qr}\bigg(\frac{V(qr,a)}{2qr}\bigg)^se\bigg(-\frac{am}{qr}\bigg)\\
         &=\sum\limits_{\substack{a=1 \\ (a,q)=1}}^{q}\sum\limits_{\substack{b=1 \\ (b,r)=1}}^{r}\bigg(\frac{V(qr,ar+bq)}{2qr}\bigg)^se\bigg(-\frac{ar+bq}{qr}m\bigg)\\
         &=\sum\limits_{\substack{a=1 \\ (a,q)=1}}^{q}\sum\limits_{\substack{b=1 \\ (b,r)=1}}^{r}\bigg(\frac{V(q,a)V(r,b)}{4qr}\bigg)^se\bigg(-\frac{am}{q}\bigg)e\bigg(-\frac{bm}{r}\bigg)\\
          &=V(q)V(r),
    \end{align*}
    which concludes the proof.
\end{proof}
\begin{lem}\label{Singular Series Extension}
Let $s \geqslant 9$ and $N^{\delta} \geqslant e^{e^{45}}$. Then $|\mathfrak{S}(m)|\leqslant e^{e^{46}}$. Furthermore, we have
\begin{align*}
|\mathfrak{S}(m)-\mathfrak{S}(m, N^\delta)| \leqslant \frac{ 24^s}{\left(\frac{5s}{21}-2\right) N^{\left(\frac{5s}{21}-2\right)\delta}}.
\end{align*}
\end{lem}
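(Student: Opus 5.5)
The plan is to bound each term $V(q)$ individually via a pointwise bound on the complete cubic exponential sum $V(q,a)/(2q)$, and then sum the tail over $q>N^\delta$. The shape of the exponent $\frac{5s}{21}-2$ suggests the bound
\[
\bigg|\frac{V(q,a)}{2q}\bigg| \leqslant C\, q^{-\frac{5}{21}}\quad\text{(or } C q^{-1/3+\epsilon}\text{ with an effective loss)},
\]
since then $|V(q)|\leqslant \varphi(q)\,C^s q^{-5s/21}\leqslant C^s q^{1-5s/21}$. Summing over $q>N^\delta$ and comparing with an integral gives
\[
\sum_{q>N^\delta}C^sq^{1-\frac{5s}{21}}\leqslant \frac{C^s}{\left(\frac{5s}{21}-2\right)N^{(\frac{5s}{21}-2)\delta}}.
\]
This requires $5s/21>2$, which holds for $s\geqslant 9$ since $45/21>2$. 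The target constant suggests $C=24$.

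The pointwise bound $|V(q,a)|/(2q)\leqslant 24 q^{-5/21}$ is where the real work lies. I would obtain it multiplicatively, using Lemma \ref{V Multiplicativity 1} to reduce to prime powers $q=p^k$.

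First, primes $p\geqslant 7$ with $p\nmid 10$. Writing $I_n$ as a cubic with leading coefficient $5/2$, invertible mod $p$, the Weil bound gives $|\sum_{n \bmod p}e(aI_n/p)|\leqslant 2\sqrt p$.

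Second, prime powers $p^k$ with $k\geqslant 2$. Here I would use the standard Hua-type stationary-phase reduction: after completing the cube, the sum over $n\bmod p^k$ is either $0$ or equal to $p^{k-1}$ or $p^{2k/3}$ times a shorter sum. This yields $|S(p^k)|\leqslant p^{2k/3}$ times a bounded factor for $p\nmid 30$.

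Third, the bad primes $2,3,5$. These have the degenerate leading coefficient and the factor $1/2$, and I would treat them by a direct bounded-loss argument: the sum loses at most a fixed power of $p$, which is absorbed into the constant.

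Combining the three cases, the per-prime saving is at least $p^{-1/3}\cdot 2$ or $p^{-1/2}\cdot 2$. For primes where $2p^{-1/2}>p^{-5/21}$, that is, small primes, we absorb the loss into the constant. This is why the constant comes out as an explicit product over small primes, which I would bound by $24$; the slack from the exponent $5/21<1/3$ is used exactly here.

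For the absolute bound $|\mathfrak{S}(m)|\leqslant e^{e^{46}}$, I would avoid the crude convergent sum, which for $s=9$ only gives $\sum 24^9 q^{-1/7}$-type divergence issues. Instead I would write $\mathfrak{S}(m)$ as the Euler product $\prod_p(1+\sum_{k}V(p^k))$, using Lemma \ref{V Multiplicativity 2}. Primes $p>p_0$ contribute factors $1+O(p^{1-s/2})$, where for $s\geqslant 9$ the exponent is at most $-3.5$ and the product converges, with $V(p^k)=0$ for large $k$ and $p\nmid 30$. The primes $p\leqslant p_0$ contribute each at most $1+\sum_k 24^s p^{k(1-5s/21)}$.

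Bounding this finite product by $\exp(\sum_{p\leqslant p_0}\log(\cdots))$ with $p_0$ doubly exponential produces a bound of the shape $e^{e^{46}}$. The hypothesis $N^\delta\geqslant e^{e^{45}}$ enters by ensuring that the tail regime is past the threshold where the constant-absorbed bound and the Weil bound agree.

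The main obstacle is making the pointwise bound fully explicit at prime powers of $2,3,5$ and at $p^k$ with $k\geqslant 2$, where Lemma \ref{Congruence} introduces the factor $2q$. I would first try the exact evaluation via the Hua recursion $S(p^k)=p^{2}S(p^{k-3})$ together with explicit computation for $p^k$ with $p\leqslant 5$ and $k\leqslant 3$.
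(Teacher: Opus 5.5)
\textbf{Gap 1: the central estimate is asserted, not proved.} The heart of the lemma is the pointwise estimate $|V(q,a)|/(2q)\leqslant 24\,q^{-5/21}$ in the tail. Your proposal asserts it rather than proves it, and the local tools you name do not deliver it as you describe them.

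The recursion $S(p^k)=p^2S(p^{k-3})$ is Hua's identity for the monomial $x^3$, and it fails for $I_n$. Completing the cube leaves a linear term. Moreover, for $p\neq 2,5$ the critical points of $5x^3-5x^2+2x$ are nondegenerate, since the discriminant of $15x^2-10x+2$ is $-20$.

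A bound $|S(p^k)|\leqslant Cp^{2k/3}$ also leaves only the slack $p^{-2k/21}$ against the target $p^{-5k/21}$. Every prime $p<C^{21/4}$ then carries a loss $Cp^{-4/21}>1$ at $k=2$. The product of these losses is exactly your uniform-in-$q$ constant, and it is exponential in the number of such primes; for $C=3$ it already exceeds $10^5$. So nothing in your sketch pins the constant at $24$.

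Your multiplicative route can be completed, and would give more than the paper. To finish it you need one of two things. The first option is the sharp local inputs:
\begin{itemize}
\item Weil's bound at $k=1$;
\item the stationary-phase bound $|S(p^k)|\leqslant 2p^{k/2}$ for $k\geqslant 2$ and $p\neq 2,5$;
\item direct evaluation at $2$ and $5$, where in fact $V(q,a)=0$ as soon as $5\mid q$ or $8\mid q$.
\end{itemize}
This yields a constant below $2$ for every $q$, which makes the hypothesis $N^\delta\geqslant e^{e^{45}}$ unnecessary for the tail. The second option is to observe that only $q>N^\delta\geqslant e^{e^{45}}$ matters, and there the slack $q^{-2/21}$ swamps any $C^{\omega(q)}$.

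\textbf{How the paper proceeds.} The paper does no local analysis at all. It applies the explicit Weyl inequality, Lemma~\ref{Weyl Inequality V2}, directly to the complete sum $V(q,a)$ with $X=2q$ and $\eta=25$. This gives $|V(q,a)|\leqslant 3.5q^{3/4}+43q^{3/4+\epsilon(q)}$, with an explicit $\epsilon(q)\leqslant\frac{1}{84}$ once $q\geqslant e^{e^{45}}$. Hence $|V(q,a)|\leqslant 47q^{16/21}$ and $|V(q)|\leqslant 24^sq^{1-5s/21}$ in that range. So $\frac{5}{21}=\frac{1}{4}-\frac{1}{84}$ is Weyl's saving minus a divisor-bound loss, not slack left below $\frac{1}{3}$. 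The hypothesis $N^\delta\geqslant e^{e^{45}}$ is precisely the range where that bound holds, not the role you assign to it.

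\textbf{Gap 2: the bound on $|\mathfrak{S}(m)|$.} Your reason for abandoning the direct sum is mistaken. For $s\geqslant 9$ one has $1-\frac{5s}{21}\leqslant-\frac{8}{7}$, so $\sum_q 24^sq^{1-5s/21}$ converges; the $\frac{1}{7}$ is only the exponent in the tail $N^{-\delta/7}$.

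More seriously, the Euler-product bound you describe is not uniform in $s$. The small-prime factors $1+\sum_k 24^sp^{k(1-5s/21)}$ grow like $c^s$; the factor at $p=2$ alone is about $2\cdot 20^s$. So the product cannot stay below $e^{e^{46}}$ for all $s\geqslant 9$, which is what the lemma claims. The same objection applies to the direct sum with constant $24^s$.

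The missing ingredient is the trivial bound $|V(q)|\leqslant\varphi(q)\leqslant q$, which follows from $|V(q,a)|\leqslant 2q$. It must be used for all $q$ below the point where the nontrivial bound drops under $1$. The paper uses it for $q\leqslant e^{e^{45}}$ and obtains
\[
|\mathfrak{S}(m)|\leqslant\sum_{q\leqslant e^{e^{45}}}q+\frac{24^s}{\frac{5s}{21}-2}\bigl(e^{e^{45}}\bigr)^{2-\frac{5s}{21}}.
\]
Both terms are bounded independently of $s$, and the first is at most $e^{2e^{45}}<e^{e^{46}}$.
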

\begin{proof}
We first evaluate $V(q,a)$ by applying Lemma \ref{Weyl Inequality V2}. To cover both cases $(5a,2q)=1$ and $(5a,2q)\neq 1$, it suffices to choose $\eta = 25$ in Lemma \ref{Weyl Inequality V2}. We obtain
\begin{align*}
|V(q,a)|&\leqslant 2(2q)^{\frac{3}{4}}+8 \cdot (25)^{\frac{1}{4}}(2q)^{1+\frac{0.53305}{ \log (2\log 2q)}}\bigg (\frac{1}{q}+\frac{1}{2q}+\frac{1}{8q^2}\bigg )^{\frac{1}{4}}(\text{log }q)^{\frac{1}{4}}\\
 &\leqslant 3.5 q^{\frac{3}{4}}+16 \cdot (50)^{\frac{1}{4}} \cdot 2^{ \frac{0.53305}{\log (2\log 2q)}} q^{\frac{3}{4}+\frac{0.53305}{\log (2\log 2q)}+ \frac{\log \log q}{4\log q}} \\
&\leqslant 3.5 q^{\frac{3}{4}}+43  q^{\frac{3}{4}+\frac{0.53305}{\log (2\log 2q)}+ \frac{\log \log q}{4\log q}} .
\end{align*}
When $q \geqslant e^{e^{45}}$, we have
\[
\frac{0.53305}{\log (2\log 2q)}+ \frac{\log \log q}{4\log q} \leqslant \frac{1}{84},
\]
which implies that
\[
|V(q,a)| \leqslant 3.5 q^{\frac{3}{4}}+43 q^{\frac{16}{21}} \leqslant 47 q^{\frac{16}{21}}.
\]
Thus, it follows that $ |V(q)| \leqslant 24^s q^{1-\frac{5s}{21}}$ for $q \geqslant e^{e^{45}}$. Since $s \geqslant 9$, this ensures that $\mathfrak{S}(m)$ converges absolutely and uniformly with respect to $m$. We have 
\begin{align*}
|\mathfrak{S}(m)| \leqslant\sum_{q=1}^\infty|V(q)| &\leqslant\sum_{q=1}^{\lfloor e^{e^{45}} \rfloor}|V(q)|+ 24^s\int_{e^{e^{45}}}^\infty x^{1-\frac{5s}{21}}\dd x\\
    & \leqslant \sum_{q=1}^{\lfloor e^{e^{45}} \rfloor} q+\frac{24^s}{\frac{5s}{21}-2}\left(e^{e^{45}}\right)^{2-\frac{5s}{21}} \leqslant e^{e^{46}}.
\end{align*}
Moreover, when $N^{\delta} \geqslant e^{e^{45}}$,
we deduce that
\begin{align*}
    |\mathfrak{S}(m)-\mathfrak{S}(m, N^\delta)| \leqslant 24^s\int_{N^{\delta}}^\infty x^{1-\frac{5s}{21}} \dd x \leqslant  \frac{ 24^s}{\left(\frac{5s}{21}-2\right) N^{\left(\frac{5s}{21}-2\right)\delta}},
\end{align*}
which is the desired conclusion.
\end{proof}
\subsection{Counting Solutions to Arithmetic Congruences}\label{subsec: Arithmetic Congruences} Since $\mathfrak{S}(m)$ converges absolutely by Lemma \ref{Singular Series Extension} and $V(q)$ is multiplicative, we have the Euler product
\begin{align*}
    \mathfrak{S}(m) =\prod_{p\text{ prime}}\sum_{k=0}^\infty V(p^k)=\prod_{p\text{ prime}} (1+V(p)+V(p^2)+\cdots).
\end{align*}
Suppose $1\leqslant n_i\leqslant t$. Let $\mathcal{M}_m(t,q)$ be the number of solutions of the congruence equation
\begin{align}\label{f(x) formulas}
g(n_1)+g(n_2)+\cdots+g(n_s)\equiv m \bmod q,
\end{align}
where $g(x)=\frac{5}{2}x^3-\frac{5}{2}x^2+x$. By abuse of notation, we write $\mathcal{M}_m(q,q) = \mathcal{M}_m(q)$.\\

In order to estimate the number of solutions, one option would be to make use of the Lang--Weil Theorem \cite{langweil} on counting points on varieties over finite fields. However, the square root of $p$ saving provided by their theorem is not enough for our purposes. So we will proceed differently in order to obtain a power saving larger than a full factor of $p$. We begin with the following lemma.

\begin{lem}\label{V Multiplicativity 3}
For $q \in \mathbb{N}$, we have
\[
\sum_{d\mid q}V(d)=q^{1-s}2^{-s}\mathcal{M}_m(2q,q). 
\]
\end{lem}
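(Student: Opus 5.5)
The plan is to prove the identity by the usual orthogonality argument for counting solutions of congruences, taking care that the natural period of $g$ modulo $q$ is $2q$ and not $q$. Write $\mathcal{M}_m(2q,q)$ as the number of tuples $(n_1,\dots,n_s)$ with $1\leqslant n_i\leqslant 2q$ satisfying \eqref{f(x) formulas} modulo $q$. By orthogonality of additive characters modulo $q$,
\[
\mathcal{M}_m(2q,q)=\frac{1}{q}\sum_{b=1}^{q}\sum_{1\leqslant n_1,\dots,n_s\leqslant 2q} e\bigg(\frac{b}{q}\big(g(n_1)+\cdots+g(n_s)-m\big)\bigg)=\frac{1}{q}\sum_{b=1}^{q}\bigg(\sum_{n=1}^{2q}e\bigg(\frac{b}{q}I_n\bigg)\bigg)^s e\bigg(-\frac{bm}{q}\bigg).
\]
Here $g(n)=I_n$ is an integer for integral $n$, so $e(bI_n/q)$ depends only on $b \bmod q$. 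This is what makes the character sum legitimate.

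The next step is to group the $b$ according to $d=q/(b,q)$. Writing $b/q=a/d$ with $(a,d)=1$ and $1\leqslant a\leqslant d$, the map $b\mapsto(d,a)$ is a bijection from $\{1,\dots,q\}$ onto pairs with $d\mid q$ and $a$ a reduced residue modulo $d$. The inner sum becomes $\sum_{n=1}^{2q}e(aI_n/d)$. By Lemma \ref{Congruence}, $n\mapsto e(aI_n/d)$ is periodic with period $2d$, which divides $2q$. Hence
\[
\sum_{n=1}^{2q}e\bigg(\frac{a}{d}I_n\bigg)=\frac{q}{d}\,V(d,a)=2q\cdot\frac{V(d,a)}{2d}.
\]
Substituting this back gives
\[
\mathcal{M}_m(2q,q)=\frac{1}{q}\sum_{d\mid q}\sum_{\substack{a=1\\(a,d)=1}}^{d}(2q)^s\bigg(\frac{V(d,a)}{2d}\bigg)^s e\bigg(-\frac{am}{d}\bigg)=\frac{(2q)^s}{q}\sum_{d\mid q}V(d),
\]
using the definition \eqref{V Definition}. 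Rearranging yields $\sum_{d\mid q}V(d)=q^{1-s}2^{-s}\mathcal{M}_m(2q,q)$, which is the claim.

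The main point needing care is the periodicity step. One must check that summing over $n\leqslant 2q$ is a whole number of periods $2d$ for every $d\mid q$, including even $d$ with odd $q/d$, where the period is genuinely $2d$. This holds because $2d\mid 2q$ always, and Lemma \ref{Congruence} supplies the period $2d$ in every case. Everything else is bookkeeping of the reduced-fraction bijection.
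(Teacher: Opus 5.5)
Your proposal is correct and follows essentially the same route as the paper: orthogonality of additive characters modulo $q$, grouping the frequencies $b$ by $d=q/(b,q)$, and collapsing the sum over $1\leqslant n\leqslant 2q$ into $q/d$ copies of $V(d,a)$ via the period $2d$ from Lemma \ref{Congruence}, which produces the factor $(q/d)^s/q=q^{s-1}2^s(2d)^{-s}$. You state explicitly that the relevant period in $n$ is $2d$ rather than $d$, which is more precise than the paper's wording at that step.
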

\begin{proof} Using the orthogonality relation
\[
\mathbbm{1}_{n \equiv m \bmod q} = \frac{1}{q} \sum_{\ell=1}^{q} e \left (\frac{ \ell(n-m)}{q} \right),
\]
we can write
\begin{align*}
\mathcal{M}_m(2q,q)=\frac{1}{q}\sum_{r=1}^q\sum_{n_1=1}^{2q}\sum_{n_2=1}^{2q}\cdots\sum_{n_s=1}^{2q}e(r(g(n_1)+g(n_2)+\cdots+g(n_s)-m)/q).
\end{align*}
The sum over $r$ can be rearranged into subsums according to $(r,q)$. Then the general term in each subsum is a periodic function with period $q/(r,q)=d$, say. Hence we obtain
\begin{align*}
\mathcal{M}_m(2q,q)&=\frac{1}{q}\sum_{d\mid q}\sum\limits_{\substack{a=1 \\ (a,d)=1}}^d\bigg (\frac{q}{d}\bigg)^s\sum_{n_1=1}^{2d}\cdots\sum_{n_s=1}^{2d}e(a(g(n_1)+g(n_2)+\cdots+g(n_s)-m)/d)\\
  &=q^{s-1}2^s\sum_{d\mid q}\sum\limits_{\substack{a=1 \\ (a,d)=1}}^d\bigg(\frac{1}{2d}\bigg)^s\sum_{n_1=1}^{2d}\cdots\sum_{n_s=1}^{2d}e(a(g(n_1)+g(n_2)+\cdots+g(n_s)-m)/d)\\ 
  &=q^{s-1}2^s\sum_{d\mid q}V(d).
\end{align*}
Thus, we have $\sum_{d\mid q}V(d)=q^{1-s}2^{-s}\mathcal{M}_m(2q,q)$.
\end{proof}
Observe that by choosing $q=p^k$, Lemma \ref{V Multiplicativity 3} yields
\begin{align*}
    \mathfrak{S}(m) &=\prod_{p\textnormal{ prime}}\sum_{k=0}^\infty V(p^k)=\prod_{p\textnormal{ prime}} \lim_{k\rightarrow\infty} 2^{-s}p^{k(1-s)}\mathcal{M}_m(2p^k,p^k).
\end{align*}
Let 
\begin{align}\label{T Definition}
T_m(p):= \lim_{k\rightarrow\infty} p^{k(1-s)}\mathcal{M}_m(p^k).
\end{align}
Since $\mathcal{M}_m(2q,q) = 2^s \mathcal{M}_m(q)$, we obtain
\begin{align}\label{Crucial Limit Result}
\mathfrak{S}(m) = \prod_{p\text{ prime}} \lim_{k\rightarrow\infty} p^{k(1-s)}\mathcal{M}_m(p^k)= \prod_{p\text{ prime}} T_m(p).
\end{align}
Therefore, in order to estimate $\mathfrak{S}(m)$, we will need to evaluate $T_m(p)$.
\begin{lem}\label{T_m(p) complete bound}
For any $s \geqslant 9$ and any prime $p$, we have
\begin{align*}
\left|T_m(p)-1\right| \leqslant e^{se^{89}}\left(\frac{p - 1}{p}\right)\cdot \left( \frac{p^{1 - \frac{5s}{21}}}{1 - p^{1 - \frac{5s}{21}}}  \right).
\end{align*}
\end{lem}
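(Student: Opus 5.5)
Write $T_m(p)=\sum_{k\geqslant 0}V(p^k)$. This follows by telescoping Lemma \ref{V Multiplicativity 3} with $q=p^k$, using $\mathcal{M}_m(2q,q)=2^s\mathcal{M}_m(q)$:
\[
p^{k(1-s)}\mathcal{M}_m(p^k)=\sum_{j=0}^{k}V(p^j),
\]
and then letting $k\to\infty$. Since $V(1)=1$, the problem reduces to bounding the tail
\[
T_m(p)-1=\sum_{k\geqslant 1}V(p^k).
\]
The target shape suggests a bound of the form
\[
|V(p^k)|\leqslant C^s\,\varphi(p^k)\,p^{-\frac{5sk}{21}},
\]
with $\varphi(p^k)=p^{k-1}(p-1)$. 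Summing a geometric series would then give exactly
\[
C^s\,\frac{p-1}{p}\sum_{k\geqslant1}p^{k(1-\frac{5s}{21})}
= C^s\,\frac{p-1}{p}\cdot\frac{p^{1-\frac{5s}{21}}}{1-p^{1-\frac{5s}{21}}}.
\]
So the whole proof comes down to a uniform pointwise bound
\[
\Bigl|\frac{V(p^k,a)}{2p^k}\Bigr|\leqslant C\,(p^k)^{-\frac{5}{21}}
\]
for every prime power, with $C^s\leqslant e^{se^{89}}$, i.e.\ $C=e^{e^{89}}$. The factor $\varphi(p^k)$ comes from the trivial count of the $a$ with $(a,p^k)=1$.

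To get the pointwise bound I would reuse the computation in the proof of Lemma \ref{Singular Series Extension}, via Lemma \ref{Weyl Inequality V2} with $\eta=25$. That gives
\[
|V(q,a)|\leqslant 3.5q^{3/4}+43\,q^{\frac34+\frac{0.53305}{\log(2\log 2q)}+\frac{\log\log q}{4\log q}},
\]
valid once $q$ is large enough for the hypotheses ($q>100$, $2q\geqslant e^3$) to hold. I would split on the size of $q=p^k$.

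\textbf{Large $q$.} For $q\geqslant e^{e^{45}}$ the earlier argument already gives $|V(q,a)|\leqslant 47q^{16/21}$. Hence $|V(q,a)/(2q)|\leqslant 24\,q^{-5/21}$.

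\textbf{Small $q$.} For $q<e^{e^{45}}$ I would use the trivial bound $|V(q,a)|\leqslant 2q$, so that
\[
\Bigl|\frac{V(q,a)}{2q}\Bigr|\leqslant 1\leqslant q^{5/21}\cdot q^{-5/21}\leqslant e^{\frac{5}{21}e^{45}}\,q^{-5/21}.
\]

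In both cases we get the uniform constant $C=\max\bigl(24,e^{\frac5{21}e^{45}}\bigr)\leqslant e^{e^{89}}$, which is generous. The small primes $p\leqslant 100$ with small $k$, where Lemma \ref{Weyl Inequality V2} does not apply, are also covered by this trivial case. Raising to the $s$-th power, multiplying by the count $\varphi(p^k)$ of $a$'s, and summing over $k\geqslant1$ finishes the proof. The geometric series converges because $1-\frac{5s}{21}<0$ for $s\geqslant9$.

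The main obstacle is this uniformity. The Weyl-type bound has a $\log\log$-dependent exponent that only drops below $1/84$ for astronomically large $q$, so the constant must absorb the whole range $q<e^{e^{45}}$. The trivial bound there costs only $e^{\frac{5}{21}e^{45}}$ per factor, which is why such an enormous constant is tolerable. I would also double-check the parity subtlety in Lemma \ref{Congruence} when $p=2$: there $V(2^k,a)$ runs over $2q$ terms with a possibly non-reduced leading coefficient. This is the reason for taking $\eta=25$ rather than a smaller value, and the same choice handles $p=5$.
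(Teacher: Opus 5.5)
Your proposal is correct and is essentially the paper's argument. The paper rederives the same series $T_m(p)-1=\sum_{r\geqslant 1}V(p^r)$ by redoing the orthogonality computation (with $\tilde g$, the substitution $t=bp^{k-r}$, and a separate case $p=2$), bounds each complete sum by $e^{e^{89}}p^{16r/21}$ via Lemma~\ref{Weyl Inequality V2} when $p\geqslant e^{e^{45}}$ or $r\geqslant e^{45}$ and trivially otherwise, and sums the same geometric series. You instead read the series off Lemma~\ref{V Multiplicativity 3}, recycle the bound $|V(q,a)|\leqslant 47q^{16/21}$ from Lemma~\ref{Singular Series Extension}, and split on whether $q=p^k\geqslant e^{e^{45}}$, which is tidier and even yields the smaller per-factor constant $e^{\frac{5}{21}e^{45}}$.

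One caveat, which you share with the paper, concerns $p=2$. There the identity $\mathcal{M}_m(2q,q)=2^s\mathcal{M}_m(q)$ fails at each finite level (already for $q=2$), because by Lemma~\ref{Congruence} $g$ is only $2q$-periodic modulo even $q$. So $T_m(2)=\sum_k V(2^k)$ has to be obtained as a limit: $2^{k(1-s)}\mathcal{M}_m(2^k)$ differs from $\sum_{j<k}V(2^j)$ only by the top term $2^{-ks}\sum_{b\text{ odd}}e(-mb/2^k)\bigl(\sum_{x\leqslant 2^k}e(bg(x)/2^k)\bigr)^s$, a half-period sum that Weyl's inequality sends to $0$ as $k\to\infty$.
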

\begin{proof} We consider the following cases.\\

\noindent\textbf{Case 1 : $p \neq 2$.} Since $(2,p)=1$, any solution $(n_1,n_2,\dots,n_s)$ of \eqref{f(x) formulas} is also a solution of
\begin{align}
\tilde{g}(n_1)+\tilde{g}(n_2)+\cdots+\tilde{g}(n_s)\equiv 2m \bmod p^k,\label{g(x) formula}
\end{align}
where $\tilde{g}(x) = 5x^3-5x^2+2x$. Therefore, we can write
\begin{align}
\mathcal{M}_m(p^k)&=\frac{1}{p^k}\sum_{t=1}^{p^k}\sum_{n_1=1}^{p^k}\sum_{n_2=1}^{p^k}\cdots\sum_{n_s=1}^{p^k}e(t(\tilde{g}(n_1)+\tilde{g}(n_2)+\cdots+\tilde{g}(n_s)-2m)/p^k)\notag \\
    &=p^{(s-1)k}+\frac{1}{p^k}\sum_{t=1}^{p^k-1}e\bigg(-\frac{2mt}{p^k}\bigg)\bigg(\sum_{x=1}^{p^k}e\bigg(\frac{t\tilde{g}(x)}{p^k}\bigg)\bigg)^s. \label{Primes > 7 Step 1}
\end{align}
Each $1\leqslant t\leqslant p^k-1$ can be written uniquely as $t = bp^{k-r}$, where $(b,p)=1$, $1\leqslant r\leqslant k$, and $1\leqslant b\leqslant p^r$. Thus, the second term in the right-hand side of \eqref{Primes > 7 Step 1} can be rewritten as
\begin{align}
\frac{1}{p^k}\sum_{r=1}^k&\sum\limits_{\substack{b=1 \\ (b,p)=1}}^{p^r}e\bigg(\frac{-2mb}{p^r}\bigg) \bigg(\sum_{x=1}^{p^k}e\bigg(\frac{b\tilde{g}(x)}{p^r}\bigg)\bigg)^s \notag \\
&=\frac{1}{p^k}\sum_{r=1}^k\sum\limits_{\substack{b=1 \\ (b,p)=1}}^{p^r}e\bigg(\frac{-2mb}{p^r}\bigg)\bigg(p^{k-r}\sum_{x=1}^{p^r}e\bigg(\frac{b\tilde{g}(x)}{p^r}\bigg)\bigg)^s \notag \\
&=p^{(s-1)k}\sum_{r=1}^k p^{-rs}\sum\limits_{\substack{b=1 \\ (b,p)=1}}^{p^r}e\bigg(\frac{-2mb}{p^r}\bigg) \bigg(\sum_{x=1}^{p^r}e\bigg(\frac{b\tilde{g}(x)}{p^r}\bigg)\bigg)^s.\label{Primes > 7 Step 2}
\end{align}
Combining \eqref{Primes > 7 Step 1} and \eqref{Primes > 7 Step 2}, we obtain
\begin{align*}
\left|\mathcal{M}_m(p^k)-p^{(s-1)k}\right| \leqslant p^{(s-1)k}\sum_{r=1}^k p^{-rs}\sum\limits_{\substack{b=1 \\ (b,p)=1}}^{p^r} \left|\sum_{x=1}^{p^r}e\bigg(\frac{b\tilde{g}(x)}{p^r}\bigg)\right|^s.
\end{align*}
Applying Lemma \ref{Weyl Inequality V2} with $X=q=p^r$, we have
\begin{align}
\left|\sum_{x=1}^{p^r}e\bigg(\frac{b\tilde{g}(x)}{p^r}\bigg)\right|&\leqslant 2p^{\frac{3r}{4}}+8p^{r+\frac{0.53305r}{\log (2r \log p)}}(2p^{-r}+p^{-2r})^{\frac{1}{4}}(\text{log }p^r)^{\frac{1}{4}}\notag\\
&\leqslant 2p^{\frac{3r}{4}}+8p^{r+\frac{0.53305r}{\log (2r \log p)}}(3p^{-r})^{\frac{1}{4}}(\text{log }p^r)^{\frac{1}{4}}\notag\\
&\leqslant 2p^{\frac{3r}{4}}+12p^{\frac{3r}{4}+\frac{0.53305r}{\log (2r \log p)}+\frac{\log (r \log p)}{4\log p}}.\label{3.7 weyl bound application}
\end{align}
For any $r\geqslant 1$ and $p \geqslant e^{e^{45}}$, we have
\begin{align}\label{finding p values}
\frac{0.53305r}{\log (2r \log p)}+\frac{\log (r \log p)}{4\log p} \leqslant \frac{r}{84}.
\end{align}
One may check that \eqref{finding p values} also holds for all $p \geqslant 3$ if $r\geqslant e^{45}$. Furthermore, for $3 \leqslant p < e^{e^{45}}$ and $1 \leqslant r \leqslant e^{45}$, trivially we have
\[\left|\sum_{x=1}^{p^r}e\bigg(\frac{b\tilde{g}(x)}{p^r}\bigg)\right| \leqslant e^{e^{89}} p^{\frac{16r}{21}}.
\]
Thus substituting the bound \eqref{finding p values} in \eqref{3.7 weyl bound application}, we see that for any odd prime $p$ and any $r\in\mathbb{N}$,
\begin{align} \left|\sum_{x=1}^{p^r}e\bigg(\frac{b\tilde{g}(x)}{p^r}\bigg)\right|& \leqslant e^{e^{89}} p^{\frac{16r}{21}}.\label{p>=7 Weyl bound}
\end{align}
\noindent\textbf{Case 2 : $p=2$.} Using the same idea as in Case 1, we have
\begin{align*}
\mathcal{M}_m(2^k)
    &=2^{(s-1)k}+\frac{1}{2^k}\sum_{t=1}^{2^k-1}e\bigg(-\frac{mt}{2^k}\bigg)\bigg(\sum_{x=1}^{2^k}e\bigg(\frac{t\tilde{g}(x)}{2^{k+1}}\bigg)\bigg)^s.
\end{align*}
It follows that
\begin{align*}
\left|\mathcal{M}_m(2^k)-2^{(s-1)k}\right|& 
\leqslant 2^{(s-1)k}\sum_{r=1}^k 2^{-rs}\sum\limits_{\substack{b=1 \\ (b,2)=1}}^{2^r} \left|\sum_{x=1}^{2^r}e\bigg(\frac{b\tilde{g}(x)}{2^{r+1}}\bigg)\right|^s.
\end{align*}
Applying Lemma \ref{Weyl Inequality V2} with $X=2^r$ and $q=2^{r+1}$, we have for any $r \in \mathbb{N}$,
\begin{align}
\left|\sum_{x=1}^{2^r}e\bigg(\frac{b\tilde{g}(x)}{2^{r+1}}\bigg)\right|&\leqslant 2^{\frac{3r}{4}+1}+8\cdot 2^{r+\frac{0.53305r}{\log(r \log 4)}}(2^{-r}+2^{-(r+1)}+2^{-2r+1})^{\frac{1}{4}}(\text{log }2^{r+1})^{\frac{1}{4}}\notag\\
&\leqslant 2\cdot2^{\frac{3r}{4}}+11\cdot 2^{\frac{3r}{4}+\frac{0.53305r}{\log(r \log 4)}+\frac{\log((r+1)\log 2)}{4 \log 2}} \leqslant 2^{e^{44}}\cdot 2^{\frac{16r}{21}}.\label{p=2 Weyl bound}
\end{align}
Combining \eqref{T Definition}, \eqref{Primes > 7 Step 1} and \eqref{Primes > 7 Step 2}, for $p$ odd, we write
\begin{align}    T_m(p)=1+\sum_{r=1}^\infty p^{-rs}\sum\limits_{\substack{b=1 \\ (b,p)=1}}^{p^r}e\bigg(\frac{-2mb}{p^r}\bigg)\bigg(\sum_{x=1}^{p^r}e\bigg(\frac{b\tilde{g}(x)}{p^r}\bigg)\bigg)^s. \notag
\end{align}
Furthermore, we have
\begin{align*}    
T_m(2)=1+\sum_{r=1}^\infty 2^{-rs}\sum\limits_{\substack{b=1 \\ (b,2)=1}}^{2^r}e\bigg(\frac{-mb}{2^r}\bigg)\bigg(\sum_{x=1}^{2^r}e\bigg(\frac{b\tilde{g}(x)}{2^{r+1}}\bigg)\bigg)^s .
\end{align*}
Putting together \eqref{p>=7 Weyl bound} and \eqref{p=2 Weyl bound}, along with the above two series representations for $T_m(p)$, we deduce that for $s \geqslant 9$ and any prime $p$,
\begin{align}
\left|T_m(p)-1\right|&\leqslant\sum_{r=1}^\infty p^{-rs}\sum\limits_{\substack{b=1 \\ (b,p)=1}}^{p^r} e^{se^{89}}p^{\frac{16rs}{21}}\notag 
= e^{se^{89}}\bigg(\frac{p - 1}{p}\bigg)\cdot \bigg( \frac{p^{1 - \frac{5s}{21}}}{1 - p^{1 - \frac{5s}{21}}} \bigg) \notag,
\end{align}
which concludes the proof.
\end{proof}
\subsection{Hasse--Weil Bounds and Hensel Lifting}\label{subsec: Hensel Lifting} Consider \eqref{f(x) formulas} in the form
\begin{align}\label{f(x) formulas mod p}
g(n_1)+g(n_2)+\cdots+g(n_s)\equiv m \bmod p,
\end{align}
where $p \geqslant 7$ prime and $1 \leqslant n_i \leqslant p$ for $i=1,2,\dots, s$. Fix $n_{3}, \dots, n_{s}$ and set 
\begin{align}
x: = n_1, \quad y :=n_2, \quad B := g(n_{3}) + \cdots + g(n_{s}) - m.\label{definition of B}
\end{align}
Since $(2,p)=1$, \eqref{f(x) formulas mod p} can be rewritten as
\begin{align}
5x^{3} - 5x^{2} + 2x + 5y^{3} - 5y^{2} + 2y + 2B \equiv 0 \bmod p.\label{elliptic curve}
\end{align}
\begin{lem}\label{elliptic curve satisfied}
Suppose $p \geqslant 7$ and let $\mathcal{N}_{p}$ denote the number of solutions to \eqref{elliptic curve} over $\mathbb{F}_{p}$. Then 
\[-2\sqrt{p}\leqslant\mathcal{N}_{p}-(p+1)\leqslant p.
\]
\end{lem}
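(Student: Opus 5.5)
The plan is to read $\mathcal{N}_p$ as the number of $\mathbb{F}_p$-points of the projective closure $C\subset\mathbb{P}^2$ of the plane cubic \eqref{elliptic curve}, namely
\[
C:\ 5X^3-5X^2Z+2XZ^2+5Y^3-5Y^2Z+2YZ^2+2BZ^3=0 .
\]
Under this reading, Hasse--Weil applied to $C$ is the main tool when $C$ is smooth. The singular and reducible cases would be handled by elementary counting.

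I would first record the geometry. The points at infinity satisfy $X^3+Y^3=(X+Y)(X^2-XY+Y^2)=0$. There is one such point if $p\equiv 2 \bmod 3$ and three if $p\equiv 1\bmod 3$, since $p\geqslant 7$ means $p\neq 3$. A useful device is the symmetric change of variables $u=x+y$, $w=xy$, which turns the affine equation into
\[
w(10-15u)=-(5u^3-5u^2+2u+2B).
\]
This equation is linear in $w$. From it one can read off when $C$ is singular or reducible. Away from $u=2/3$, the variable $w$ is determined by $u$, and the fibre over $(u,w)$ consists of the roots of $t^2-ut+w$. Singular points must therefore lie over $u=2/3$ or over the branch locus of this double cover. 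I would use this to show that $C$ is geometrically irreducible for every $B$, with at most one singular point, which occurs only for finitely many special values of $B$.

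\textbf{Smooth case.} If $C$ is nonsingular, it is a genus-one curve. Here $C(\mathbb{F}_p)$ is nonempty, because a point at infinity such as $[1:-1:0]$ is always rational. Hence $C$ is an elliptic curve over $\mathbb{F}_p$, and the Hasse--Weil bound gives $|\mathcal{N}_p-(p+1)|\leqslant 2\sqrt p$. This yields both stated inequalities, since $2\sqrt p\leqslant p$ for $p\geqslant 7$.

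\textbf{Singular irreducible case.} If $C$ is geometrically irreducible with a single node or cusp, the smooth locus of $C$ is a form of $\mathbb{G}_a$ or $\mathbb{G}_m$ (split or non-split). It therefore has $p$, $p-1$ or $p+1$ rational points. Adding the singular point gives $\mathcal{N}_p\in\{p,\,p+1,\,p+2\}$, which lies well inside $[\,p+1-2\sqrt p,\;2p+1\,]$.

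\textbf{Expected obstacle.} The hard step is ruling out, or bounding, the reducible case. If $C$ split as a line times a conic, the count could in principle reach $2p+2$, which would violate the upper bound $p$. My first attempt would be to show via the $(u,w)$-parametrization that $C$ contains no line. Any line on $C$ would have to pass through one of the points at infinity, and hence have slope $-1$ or slope a primitive cube root of $-1$. Substituting $y=\lambda x+c$ for these finitely many slopes gives explicit polynomial identities in $x$. These should be checkable to be impossible, since a cubic in $x$ would have to vanish identically and the coefficients $5,-5,2$ would force a contradiction modulo $p\geqslant 7$. If some exceptional $B$ survives this check, I would fall back on the crude line-plus-conic count, taking into account that the line and conic meet in at least one point. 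The lower bound is never a problem in that case, because the line alone contributes $p+1$ points.
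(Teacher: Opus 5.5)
Your three-way split (smooth curve via Hasse--Weil, singular curve via its nonsingular locus, reducible curve by hand) matches the paper's cases. The gap is in the reducible case: the step you expect to eliminate it is false, and this is exactly the case that forces the upper bound $p$ in the lemma.

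\textbf{The curve does contain a line.} Let $\tilde g(x)=5x^3-5x^2+2x$. Its inflection point is at $x=1/3$, and a cubic is point-symmetric about its inflection point, so $\tilde g(x)+\tilde g(2/3-x)=16/27$ identically. Hence for $B\equiv -8/27 \pmod p$ the whole line $x+y=2/3$ lies on the curve.

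Your own $(u,w)$-equation already shows this. At $u=2/3$ the left side vanishes identically and the right side is $-(16/27+2B)$. Your slope $-1$ test also fails to give a contradiction. Substituting $y=-x+c$ gives the coefficients $15c-10$, $-15c^2+10c$ and $5c^3-5c^2+2c+2B$, and all three vanish at $c=2/3$, $B=-8/27$. So "geometrically irreducible for every $B$" is false. This is the paper's Case 1 ($B_0=0$), where the equation becomes $R_1(R_1^2+3R_2^2+\frac{4}{15})\equiv 0 \bmod p$.

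\textbf{The fallback fails, and the projective statement is false.} The line meets the conic only where $(x-y)^2=-4/45$; the line's point at infinity $[1:-1:0]$ is not on the conic. So the two components share a rational point only when $-5$ is a square mod $p$. Otherwise the projective closure has $(p+1)+(p+1)=2p+2$ rational points, and under your projective reading the inequality $\mathcal{N}_p-(p+1)\leqslant p$ is false. For example, take $p=11$ and $B\equiv 5\equiv -8/27 \pmod{11}$: there are $23$ affine solutions and $24$ projective points.

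\textbf{The fix is to count affine solutions.} The lemma counts affine pairs $(x,y)\in\mathbb{F}_p^2$; that is how $\mathcal{N}_p$ is used later, when it is summed over $n_3,\dots,n_s$. With this count the reducible case is immediate: the line contributes exactly $p$ affine points and the conic at most $p+1$, so $p\leqslant\mathcal{N}_p\leqslant 2p+1$.

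Once you switch to the affine count, you must also subtract the $1$ or $3$ rational points at infinity in the smooth and singular cases. In the smooth case, Hasse--Weil then gives directly only $\mathcal{N}_p\geqslant p-2-2\sqrt p$. The paper's Case 2 glosses over this too, since it equates the affine count with $\#E(\mathbb{F}_p)$. The loss of at most $3$ is harmless in the later application.
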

\begin{proof}
Let $\frac{1}{a}$ denote the inverse of $a\bmod p$. Since $p\geqslant 7$, \eqref{elliptic curve} can be reduced to 
\begin{align}
   x^3-x^2+\frac{2}{5}x+y^3-y^2+\frac{2}{5}y+\frac{2}{5}B\equiv 0\bmod p.\label{6.6.1} 
\end{align}
Let $R_1=x+y-\frac{2}{3}$, $ R_2=x-y$ and $B_0=\frac{64}{135}+\frac{8B}{5}$. Then we can rewrite \eqref{6.6.1} as
\begin{align}
    R_1^3+3R_1R_2^2+\frac{4}{15}R_1+B_0\equiv 0\bmod p.\label{6.6.3} 
\end{align}
\indent We consider the following cases.\\

\noindent \textbf{Case 1 : $B\equiv -\frac{8}{27}\bmod p$.}
Then $B_0\equiv 0\bmod p$. Hence \eqref{6.6.3} is reduced to 
\begin{align}
    R_1  (R_1^2+3R_2^2+\frac{4}{15} )\equiv 0\bmod p.\label{reduced 6.6.3}
\end{align}
All solutions to \eqref{reduced 6.6.3} come from either 
\begin{align}
    R_1&\equiv 0\bmod p, \quad \textrm{or}\label{case 1, subcase 1}\\
  R_1^2+3R_2^2+\frac{4}{15}&\equiv 0\bmod p. \label{case 1, subcase 2}
\end{align} 
If $R_1\equiv 0\bmod p$, then $x+y\equiv \frac{2}{3}\bmod p$. Thus, the number of solutions to \eqref{case 1, subcase 1} is equal to $p$. If $R_1^2+3R_2^2+\frac{4}{15}\equiv 0\bmod p$, then since $h(R_1, R_2) = R_1^2+3R_2^2+\frac{4}{15}$ is an irreducible non-singular curve over $\mathbb{F}_p$, the number of solutions to \eqref{case 1, subcase 2} is $p+1$. Combining the two cases, there are at most $2p+1$ solutions to \eqref{reduced 6.6.3}. If both \eqref{case 1, subcase 1} and \eqref{case 1, subcase 2} hold, then 
\[
3R_2^2+\frac{4}{15}\equiv 0 \bmod p,
\]
which has at most 2 solutions. Thus, the total number of solutions to \eqref{reduced 6.6.3} is between $2p-1$ and $2p+1$. In particular, this implies that $2p-1\leqslant\mathcal{N}_p \leqslant 2p+1$. \\

\noindent \textbf{Case 2 : $B\not\equiv -\frac{8}{27}\bmod p$.} Since $B\not\equiv -\frac{8}{27}\bmod p$, we have $B_0\not\equiv 0\bmod p$. If $R_1\equiv 0\bmod p$, then \eqref{6.6.3} is reduced to $B_0\equiv 0\bmod p$, which is a contradiction. Therefore, multiplying \eqref{6.6.3} by $R_1^{-3}$, and writing $S_1=R_2R_1^{-1}, S_2=R_1^{-1}$, we get
\begin{align}
1+3S_1^2+\frac{4}{15}S_2^2+\bigg (\frac{64}{135}+\frac{8B}{5}\bigg )S_2^3&\equiv 0 \bmod p \label{6.6.5}.
\end{align}
We then multiply \eqref{6.6.5} by $27B_0^2$, and let $T_1=9B_0S_1$ and $T_2=-3B_0S_2$ to obtain
\begin{align}
    T_1^2\equiv T_2^3-\frac{4}{5}T_2^2-27B_0^2\bmod p.\label{6.6.6}
\end{align}
Finally, we let $Z_1=T_1, Z_2=T_2-\frac{4}{15}$. Then \eqref{6.6.6} becomes 
\begin{align}
    Z_1^2\equiv Z_2^3+\frac{4}{225}Z_2+\bigg (-\frac{128}{3375}-27B_0^2 \bigg)\bmod p,\label{6.22}
\end{align}
which is an elliptic curve when 
\begin{align}
    \frac{4^4}{15^6}+27\bigg (\frac{128}{3375}-27B_0^2 \bigg)^2\not\equiv 0\bmod p.\label{discriminant}
\end{align}
Note that \eqref{discriminant} is not satisfied if and only if 
\begin{align}
    j(Z_2):= Z_2^3+\frac{4}{225}Z_2+\bigg(-\frac{128}{3375}-27B_0^2\bigg)\bmod p\label{j(z)}
\end{align}
has at least two equal roots in the algebraic closure $\overline{\mathbb{F}_p}$ of $\mathbb{F}_p$. Suppose all three roots to \eqref{j(z)} are equal, denoted by $\alpha$. Then 
\begin{align}
    j(Z_2) = (Z_2-\alpha)^3\label{j(z) three equal roots}
\end{align}
Equating the coefficients of \eqref{j(z) three equal roots} and the right-hand side of \eqref{6.22}, we have $\frac{4}{225}\equiv 0\bmod p$, which is a contradiction since $p\geqslant 7$. Now suppose $j(Z_2)$ has two roots, say $\alpha,\beta$, in the algebraic closure $\overline{\mathbb{F}_p}$ with $\alpha \neq \beta$ and $j(Z_2)=(Z_2-\alpha)^2(Z_2-\beta)$. Again, equating the coefficients, we obtain the system of equations
\begin{align}
    j(Z_2)& \equiv (Z_2-\alpha)^2(Z_2+2\alpha) \bmod p,\notag\\
    \frac{4}{225}&\equiv-3\alpha^2 \bmod p,\notag\\
    \textrm{and} \quad -\frac{128}{3375}-27B_0^2 &\equiv 2\alpha^3 \bmod p.\label{2 alpha cubed}
\end{align}
From the above equation, both $\alpha^2$ and $\alpha^3$ are in $\mathbb{F}_p$. Moreover, $\alpha^2$ is invertible. This forces $\alpha$ to be in $\mathbb{F}_p$. Now \eqref{6.22} is equivalent to
\begin{align}
Z_1^2 \equiv (Z_2-\alpha)^2(Z_2+2\alpha)\bmod p, \label{6.21}
\end{align}
where $\alpha \equiv \frac{2}{45}\sqrt{-3}\bmod p$ is in $\mathbb{F}_p$. The number of solutions to \eqref{6.21} comes from either $Z_2\equiv\alpha\bmod p$ or $Z_2\not\equiv\alpha\bmod p$. In the first case, when $Z_2\equiv\alpha\bmod p$, it forces $Z_1\equiv 0\bmod p$, which implies that $x=y$. Substituting $x=y$ in all variables, since $Z_2=T_2-4/15$, we get
\begin{align}
    \alpha= T_2-\frac{4}{15} = -3B_0S_2-\frac{4}{15} = \frac{-9B_0}{6x-2}.\label{unique solution for x=y}
\end{align}
Note that since $R_1\not\equiv 0\bmod p$, $6x-2\not\equiv 0\bmod p$. Therefore, \eqref{unique solution for x=y} is equivalent to $x = -\frac{3B_0}{2\alpha}+\frac{1}{3}$. This means that $x=y$ has a unique solution in $\mathbb{F}_p$. In the latter case, when $Z_2\not\equiv\alpha\bmod p$, let $t=Z_1 (Z_2-\alpha)^{-1}$. Then \eqref{6.21} is equivalent to 
\begin{align}
    t^2\equiv Z_2+2\alpha\bmod p.\label{parabola equation}
\end{align}
This is a nonsingular parabola, so the number of solutions to \eqref{parabola equation} is $p+1$. Adding the number of solutions of both situations for multiple roots, we conclude that when \eqref{6.22} is not an elliptic curve, $\mathcal{N}_p = p+2$. The only remaining case is when \eqref{6.22} is an equation of an elliptic curve. For this case, we apply the Hasse--Weil Theorem, established in a series of papers (see \cite{hassecongruentfunctions},  \cite{hassecomplexmultiplication}, \cite{hassefiniteorder}, and \cite{weil1949}), and conclude that $\lvert\mathcal{N}_{p}-(p+1)\rvert\leqslant 2\sqrt{p}$.
\end{proof}
Given a prime $p \geqslant 7$, consider the congruence equation
\begin{align}\label{Derivative Equation}
\frac{15}{2}x^2-5x+1\equiv 0 \bmod p.
\end{align}
There are at most 2 solutions $\text{mod }p$ to \eqref{Derivative Equation}, say $\alpha$ and $\beta$. Given a solution $(\delta_1,\delta_2, \dots, \delta_s)$ to \eqref{f(x) formulas mod p}, if there exists at least some $j \in \{1,2,\dots,s\}$ such that $\delta_j\not\in\{\alpha,\beta\}$, we call such a solution a ``good" solution. We have the following lemma.
\begin{lem}\label{hensel's lemma lifting}
Let $p \geqslant 7$ be a prime, $k \geqslant 2$ and consider the congruence equation
\begin{align}\label{f(x) formulas Hensel Lifting}
g(n_1)+g(n_2)+\cdots+g(n_s)\equiv m \bmod p^k,
\end{align}
where $g(x)=\frac{5}{2}x^3-\frac{5}{2}x^2+x$ and $1\leqslant n_i\leqslant p^k$ for all $i \in \{1,2,\dots,s\}$. Then every good solution of \eqref{f(x) formulas mod p} can be lifted uniquely to $p^{(k-1)(s-1)}$ solutions of \eqref{f(x) formulas Hensel Lifting}. 
\end{lem}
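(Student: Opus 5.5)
The plan is induction on $k$ via Hensel's lemma in one variable, with the other $s-1$ variables free. Since $p\geqslant 7$, the integer $2$ is invertible modulo $p^k$. Hence \eqref{f(x) formulas Hensel Lifting} is equivalent to the integer-coefficient congruence $\tilde g(n_1)+\cdots+\tilde g(n_s)\equiv 2m \bmod p^k$ with $\tilde g(x)=5x^3-5x^2+2x$. Its derivative is $\tilde g'(x)=15x^2-10x+2 = 2\big(\tfrac{15}{2}x^2-5x+1\big)$, so $\tilde g'(\delta)\not\equiv 0 \bmod p$ exactly when $\delta\notin\{\alpha,\beta\}$, the roots of \eqref{Derivative Equation}.

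Fix a good solution $(\delta_1,\dots,\delta_s)$ modulo $p$, and choose an index $j$ with $\delta_j\notin\{\alpha,\beta\}$. The claim to prove by induction on $k\geqslant 1$ is the following: the number of solutions $(n_1,\dots,n_s)$ modulo $p^k$ of \eqref{f(x) formulas Hensel Lifting} with $n_i\equiv\delta_i \bmod p$ for all $i$ equals $p^{(k-1)(s-1)}$. The case $k=1$ is trivial.

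For the inductive step from $k$ to $k+1$, I count the lifts of a solution $(n_1,\dots,n_s)$ modulo $p^k$ (lying over the fixed good solution) to solutions modulo $p^{k+1}$.
\begin{itemize}
\item Write each candidate lift as $n_i+t_ip^k$ with $t_i\in\{0,\dots,p-1\}$.
\item Taylor expansion of the integer polynomial $\tilde g$ gives $\tilde g(n_i+t_ip^k)\equiv \tilde g(n_i)+t_ip^k\tilde g'(n_i)\bmod p^{k+1}$, because $2k\geqslant k+1$.
\item The lifted congruence therefore becomes $\sum_i t_i\tilde g'(n_i)\equiv -\big(\sum_i\tilde g(n_i)-2m\big)/p^k \bmod p$, a linear equation in $(t_1,\dots,t_s)$ over $\mathbb F_p$.
\item In this equation, $\tilde g'(n_j)\equiv\tilde g'(\delta_j)\not\equiv 0 \bmod p$, so $t_j$ is uniquely determined by the free choice of the other $s-1$ values $t_i$.
\item Hence each solution modulo $p^k$ has exactly $p^{s-1}$ lifts. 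Distinct solutions modulo $p^k$ give disjoint sets of lifts, and every solution modulo $p^{k+1}$ reduces to one modulo $p^k$ over the same good residue tuple. This gives the count $p^{(k-1)(s-1)}\cdot p^{s-1}=p^{k(s-1)}$, completing the induction.
\end{itemize}

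Two points need a little care. First, the statement ranges over $1\leqslant n_i\leqslant p^k$, which is a complete residue system modulo $p^k$, so residue counting is legitimate. Second, goodness is a property of the residue tuple modulo $p$ and is preserved under lifting, so the same index $j$ works at every stage. The only mildly delicate step is making sure that passing from $g$ (with its $\tfrac12$ coefficients) to $\tilde g$ is a bijection on solution sets, which holds because $2$ is a unit modulo $p^k$. "Lifted uniquely" is read as: the lifts form a set of exactly $p^{(k-1)(s-1)}$ solutions lying over the given one, and distinct good solutions modulo $p$ have disjoint sets of lifts.
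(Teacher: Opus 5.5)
Your proof is correct and is essentially the paper's argument. The paper fixes arbitrary lifts modulo $p^k$ of the other $s-1$ coordinates ($p^{(k-1)(s-1)}$ choices) and applies one-variable Hensel's lemma to determine $n_j$ uniquely; you unroll that Hensel induction level by level, solving the linear congruence for $t_j$ at each step, and you make explicit the passage to $\tilde g$ to clear the factor $\tfrac12$, which the paper leaves implicit.
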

\begin{proof}
Let $(\delta_1,\delta_2,\dots,\delta_s)$ be a good solution to \eqref{f(x) formulas mod p}. Then there exists some $\delta_j\not\in\{\alpha,\beta\}$ which implies that $g'(\delta_j)\not\equiv 0 \bmod p$. By Hensel's lemma, $\delta_j$ can be extended uniquely to a solution $\text{mod }p^k$ for any $k\geqslant 2$. In other words, for any $n_1,\dots,n_{j-1},n_{j+1},\dots,n_s$ such that $n_i\equiv \delta_i \bmod p$, we have
\begin{align*}
g(n_1)+\cdots+g(n_{j-1})+g(\delta_j)+g(n_{j+1})+g(n_{s})\equiv m \bmod p^k.
\end{align*}
Hence in this case, $(\delta_1,\delta_2,\dots,\delta_s)$ can be lifted to $p^{(k-1)(s-1)}$ solutions mod $p^k$.
\end{proof}
\begin{rem}\label{Hensel Remark}
Suppose in \eqref{f(x) formulas mod p}, we fix all but $r$ of the variables $n_i$'s. Then the number of solutions to \eqref{f(x) formulas mod p} which are not good is at most $2^r$.
\end{rem}
\begin{lem}\label{T_m(p) lower bound}
For $s\geqslant 9$ and any prime $p$, we have
\[T_m(p)> \max\bigg\{p^{1-s},1-\frac{2}{\sqrt{p}}-\frac{3}{p}\bigg\}.\]
\end{lem}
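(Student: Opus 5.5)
Two separate bounds are needed: $T_m(p)>p^{1-s}$, which holds for every prime, and, when $p\geqslant 7$, the stronger bound $T_m(p)>1-\frac{2}{\sqrt p}-\frac{3}{p}$. For the small primes $p\in\{2,3,5\}$ the second quantity is negative, so the first bound alone suffices there. Both bounds come from producing good solutions modulo $p$ (or a suitable substitute when $p\leqslant 5$) and lifting them, since by \eqref{T Definition}
\[
T_m(p)=\lim_{k\to\infty}p^{k(1-s)}\mathcal{M}_m(p^k)\geqslant p^{1-s}\cdot\#\{\text{good solutions mod }p\}
\]
by Lemma \ref{hensel's lemma lifting}. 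Here a good solution mod $p$ is counted once, and it lifts to $p^{(k-1)(s-1)}$ solutions mod $p^k$.

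\textbf{Step 1: the bound $T_m(p)>p^{1-s}$.} For $p\geqslant 7$ I would exhibit a single good solution mod $p$. I fix $n_3,\dots,n_s$ arbitrarily and solve for $(x,y)$ in \eqref{elliptic curve}. By Lemma \ref{elliptic curve satisfied} there are at least $p+1-2\sqrt p$ affine solutions. By Remark \ref{Hensel Remark}, at most $4$ of these have both $x,y\in\{\alpha,\beta\}$, and $p+1-2\sqrt p>4$ for $p\geqslant 7$. Lemma \ref{elliptic curve satisfied} bounds the projective or affine count; I would check that at most $3$ points at infinity are lost, which is why I allow the constant to be sloppy. Hensel lifting then gives $\mathcal{M}_m(p^k)\geqslant p^{(k-1)(s-1)}$. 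Strictness will come from having more than one good solution.

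For $p=2,3,5$ the same lifting argument must be adapted. For $p=3,5$, note that $g'(x)=\frac{15}{2}x^2-5x+1\equiv 1 \bmod 5$, so every $x$ is nonsingular, and $g'(x)\equiv 1+x\bmod 3$. I would then check directly that $\{g(n)\bmod p\}$ together with $s\geqslant 9$ summands hits every residue, with at least one nonsingular coordinate. For example $g(1)=1$ and $g'(1)\equiv 1$, and the summands can be adjusted using $g(0)=0$. For $p=2$, $g$ is integer-valued with $g(n)\equiv n\bmod 2$ plus corrections. Hensel lifting works because $g(x+2t)-g(x)\equiv g'(x)\,2t \bmod 2^{k}$, and the $2$-adic valuation of $g'$ must be handled; I would take $x$ odd so that $g'(x)=\frac{15x^2-10x+2}{2}$ is a $2$-adic unit. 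Stacking several such solutions gives the strict inequality.

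\textbf{Step 2: the bound for $p\geqslant 7$.} I count good solutions mod $p$ by fixing $n_3,\dots,n_s$, which gives $p^{s-2}$ choices. For each choice, Lemma \ref{elliptic curve satisfied} gives at least $p+1-2\sqrt p$ pairs $(x,y)$, minus the affine/infinity discrepancy of at most $3$ points. Subtracting the non-good ones, of which there are at most $4$ per fixed tuple, gives
\[
\#\{\text{good}\}\geqslant p^{s-2}\,(p-2\sqrt p-3)
\]
after combining constants. Double counting is not an issue because each full tuple is counted once per choice of $(n_3,\dots,n_s)$. Multiplying by $p^{1-s}$ yields $T_m(p)\geqslant 1-\frac{2}{\sqrt p}-\frac{3}{p}$, and strict inequality follows from the slack in Lemma \ref{elliptic curve satisfied}.

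\textbf{Main obstacle.} The delicate point is the bookkeeping in Step 2: Lemma \ref{elliptic curve satisfied} is stated after the birational changes of variables, which discard points with $R_1\equiv0$ and add points at infinity. I need the affine count of \eqref{elliptic curve} to be at least $p-2\sqrt p-1$ before the non-good solutions are removed. I would first verify that the transformations lose at most two points, which, together with the at most $4$ non-good points minus the slack, fits inside the $3/p$ allowance. The verification for small primes in Step 1 is routine but must be done carefully.
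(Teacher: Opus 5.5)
Your route is the paper's: lift good solutions with Lemma~\ref{hensel's lemma lifting}, count them for large $p$ by fixing $n_3,\dots,n_s$ and invoking Lemma~\ref{elliptic curve satisfied}, and treat small primes by a one-variable argument. However, several concrete steps fail as written.

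\textbf{The prime $p=7$.} Step~1 needs $p+1-2\sqrt p>4$, but $p+1-2\sqrt p=(\sqrt p-1)^2\approx 2.71$ at $p=7$. After removing up to three points at infinity, the Hasse--Weil bound gives nothing at all. This is why the paper uses the curve count only for $p\geqslant 11$ and handles $p=7$ together with $p=3,5$. For these primes you can take $n_1\equiv 0$ (so $g'(n_1)\equiv 1$) and $n_2,\dots,n_s\in\{0,1\}$ with exactly $j\equiv m \pmod p$ ones. This works because $g(0)=0$, $g(1)=1$ and $s-1\geqslant 8\geqslant p-1$.

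\textbf{The prime $p=2$.} You have the parity backwards. For odd $x$ the numerator $15x^2-10x+2$ is odd, so $g'(x)$ has $2$-adic valuation $-1$. For $x=2y$ one gets the unit $g'(x)=30y^2-10y+1$, which is why the paper uses $x\equiv 0 \bmod 2$. Moreover, $g(x+2t)-g(x)\equiv 2t\,g'(x)\pmod{2^k}$ is false, because the term $(15x-5)\cdot 2t^2$ has the same $2$-adic size. In fact $g(x)\equiv 0 \pmod 4$ for every even $x$. The linearization $g(x+2^jt)\equiv g(x)+2^jt\,g'(x)\pmod{2^{j+1}}$ is valid only for $j\geqslant 2$. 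So the lift must start from an even solution modulo $4$, i.e.\ you must arrange $g(n_2)+\cdots+g(n_s)\equiv m\pmod 4$.

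\textbf{The accounting in Step~2 does not close.} Your suspicion about points at infinity is justified. The projective closure of \eqref{elliptic curve} meets the line at infinity where $x^3+y^3=0$, in one or three rational points, so the affine count is only guaranteed to be at least $p-2-2\sqrt p$. But subtracting $4$ non-good pairs per tuple from $p-1-2\sqrt p$ (let alone $p-2-2\sqrt p$) gives $1-\frac{2}{\sqrt p}-\frac{5}{p}$, not $1-\frac{2}{\sqrt p}-\frac{3}{p}$. The slack from $2\sqrt p\notin\mathbb{Z}$ is less than one point, so it cannot make up the difference. If you take Lemma~\ref{elliptic curve satisfied} at face value as an affine bound, your Step~2 is literally the paper's computation $p^{s-2}(p+1-2\sqrt p)-4p^{s-2}$.

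The observation you are missing is that a non-good solution has \emph{every} coordinate, including $n_3,\dots,n_s$, in $\{\alpha,\beta\}$. So there are at most $2^s$ of them in total, rather than $4p^{s-2}$. This gives
$T_m(p)\geqslant 1-\frac{2}{\sqrt p}-\frac{2}{p}-2^sp^{1-s}$,
which strictly exceeds $1-\frac{2}{\sqrt p}-\frac{3}{p}$ because $2^s<p^{s-2}$ for $p\geqslant 11$. It also delivers the strict inequality in the statement, which the paper's own argument only obtains with $\geqslant$.
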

\begin{proof}
We consider the following cases. \\

\noindent \textbf{Case 1 : $p\geqslant 11$.} For $p\geqslant 11$, by Lemma \ref{elliptic curve satisfied}, the number of solutions $\mathcal{N}_{p}$ to \eqref{elliptic curve} satisfies $\mathcal{N}_{p}-(p+1)\geqslant -2\sqrt{p}.$ It follows that
\begin{align}\label{final hasse bound}
\mathcal{M}_{m}(p) -(p+1)p^{s-2} \geqslant  -2\sum^{p}_{n_{3} = 1} \sum^{p}_{n_{4} = 1} \cdots \sum^{p}_{n_{s} = 1}  \sqrt{p} \geqslant -2p^{s-\frac{3}{2}}.
\end{align}
From Remark \ref{Hensel Remark}, there are at most 4 solutions to \eqref{elliptic curve} such that Hensel's Lemma cannot be applied when lifting the solution $\bmod \hspace{0.1cm}p^k$. When $p\geqslant 11$, $\mathcal{N}_{p}> 4$. So there exists at least one solution to which Hensel's Lemma can be applied. Thus for $p\geqslant 11$, by Lemma \ref{hensel's lemma lifting} we have
\begin{align*}
\mathcal{M}_m(p^k) \geqslant (\mathcal{M}_m(p)-4p^{s-2})p^{(k - 1)(s - 1)}.
\end{align*}
We deduce that
\begin{align}\label{p>11 Case 1}
T_m(p) \geqslant \lim_{k \rightarrow \infty} \frac{(\mathcal{M}_{m}(p)-4p^{s-2}) p^{(k - 1)(s-1)}}{p^{k(s-1)}}= \frac{\mathcal{M}_{m}(p)-4p^{s-2}}{p^{s-1}}.
\end{align}
\noindent Substituting \eqref{final hasse bound} in \eqref{p>11 Case 1}, we obtain
\begin{align*}
T_m(p)\geqslant 1-\frac{2}{\sqrt{p}}-\frac{3}{p}.
\end{align*}
\textbf{Case 2 : $p=2$.} Consider the congruence equation
\begin{align}\label{1 Variable}
g_0(x):=\frac{5x^3}{2} - \frac{5x^2}{2} + x + B \equiv 0 \bmod 2,
\end{align}
where $B = g(n_{2}) + \cdots + g(n_{s}) - m$. Note $g(x)$ and $x$ have the same parity. Hence, for any $B$, $x\equiv B \bmod 2$ is always a solution to \eqref{1 Variable}. Also, when $x\equiv 0 \bmod 2$, $g_0'(x) \not\equiv 0 \bmod 2$. Thus, for any even $B$, Hensel's Lemma can be applied. Since we can arrange the parity of $g(n_2)+\cdots+g(n_s)$, such an even $B$ always exists for any $m$. Applying Lemma \ref{hensel's lemma lifting}, we have $\mathcal{M}_m(2^k)\geqslant 2^{(k-1)(s-1)},$
which implies that
\begin{align*}
T_m(2) \geqslant \lim_{k \rightarrow \infty}2^{k(1-s)}2^{(k-1)(s-1)}=2^{1-s}>0.
\end{align*}
\textbf{Case 3 : $p=3,5,7$.} It suffices to consider the congruence equation
\begin{align}\label{1 Variable mod 3}
g_0(x):=\frac{5x^3}{2} - \frac{5x^2}{2} + x + B \equiv 0 \bmod 3,
\end{align}
where $B = g(n_{2}) + \cdots + g(n_{s}) - m$. When $x\equiv 0,1 \bmod 3$, $g_0'(x)\not\equiv 0 \bmod 3$. Therefore, $B$ must satisfy $\equiv 0,2 \bmod 3$. If $m\equiv 0 \bmod 3$, then we let all $n_i\equiv 0 \bmod 3$ for $i=2,3,\dots, s$ which ensures that $B\equiv 0 \bmod 3$. If $m\equiv 1 \bmod 3$, then choose $n_2\equiv 1 \bmod 3$ and the rest of $n_i\equiv 0 \bmod 3$ so that $B\equiv 2$ (mod $3$). Finally, if $m\equiv 2 \bmod 3$, then we pick $n_2, n_3\equiv 1 \bmod 3$ and the rest of $n_i\equiv 0 \bmod 3$ so that $B\equiv 2 \bmod 3$. Therefore, there always exists a solution of $\eqref{1 Variable mod 3}$ to which Hensel's lemma can be applied. Applying Lemma \ref{hensel's lemma lifting}, we obtain $\mathcal{M}_m(3^k)\geqslant 3^{(k-1)(s-1)},$
which implies $T_m(3)\geqslant 3^{1-s}>0.$ The arguments for $p=5,7$ are similar.
\end{proof}
\subsection{A lower bound for $\mathfrak{S}(m)$}
We are ready to establish a lower bound for $\mathfrak{S}(m)$.
\begin{lem}\label{lower bound for S(m)}
We have $\mathfrak{S}(m)>0$. More precisely,
\begin{align}\label{Sigma Bound}
 \mathfrak{S}(m) \geqslant \num{1.56e-3}\cdot210^{1-s}\cdot 2^{\frac{42z}{42-5s}} \exp\bigg(-\frac{z \log 2}{\log z}\bigg(1+\frac{1.2762}{\log z}\bigg)\bigg),
\end{align}
where $z=(2 e^{se^{89}}+1)^{\frac{21}{5s - 21}}$.
\end{lem}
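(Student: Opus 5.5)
We will use the Euler product \eqref{Crucial Limit Result}, $\mathfrak{S}(m)=\prod_p T_m(p)$, and split the primes at a threshold $z$. For large primes we apply Lemma \ref{T_m(p) complete bound}; for small primes we apply Lemma \ref{T_m(p) lower bound}. Since every $T_m(p)>0$ by Lemma \ref{T_m(p) lower bound}, the product of positive factors will be positive once we show the tail product is bounded below. That gives $\mathfrak{S}(m)>0$.

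\textbf{Large primes.} Write $C=e^{se^{89}}$ and $\kappa=\frac{5s}{21}-1>0$, so that Lemma \ref{T_m(p) complete bound} gives
\[
|T_m(p)-1|\leqslant C\,\frac{p^{-\kappa}}{1-p^{-\kappa}}.
\]
Choose $z$ so that $Cp^{-\kappa}\leqslant \tfrac12(1-p^{-\kappa})$ for all $p>z$. It suffices to require $p^{\kappa}\geqslant 2C+1$, that is $p\geqslant (2C+1)^{21/(5s-21)}$, which is exactly the stated $z$. For $p>z$ we then have
\[
T_m(p)\geqslant 1-\frac{2C}{p^{\kappa}}\cdot\frac{1}{2}\cdots,
\]
and we will use $\log(1-x)\geqslant -2x\log 2$ for $0\leqslant x\leqslant 1/2$. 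This gives
\[
\prod_{p>z}T_m(p)\geqslant \exp\Big(-c\,C\sum_{p>z}p^{-\kappa}\Big).
\]
We bound $\sum_{n>z}n^{-\kappa}\leqslant z^{1-\kappa}/(\kappa-1)$. Because $z^{\kappa}\approx 2C+1$, this contributes a factor of the shape $2^{-O(z)}$. This should be where the $2^{\frac{42z}{42-5s}}$ factor comes from, since $\frac{42}{42-5s}=-\frac{1}{\kappa-1}$ up to normalisation. We will track the constants so the exponent matches exactly.

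\textbf{Small primes.} For $p\leqslant z$, Lemma \ref{T_m(p) lower bound} gives $T_m(p)>p^{1-s}$ for $p\in\{2,3,5,7\}$, and this yields the factor $210^{1-s}$. For $11\leqslant p\leqslant z$ we use $T_m(p)>1-\frac{2}{\sqrt p}-\frac{3}{p}$. This quantity is at least $0.096$ at $p=11$ and increases towards $1$. However, the product over primes up to $z$ of $1-2p^{-1/2}-3p^{-1}$ behaves like $\exp(-c\sqrt z/\log z)$, so a finer bound is needed. The appearance of $\exp(-\frac{z\log 2}{\log z}(1+\frac{1.2762}{\log z}))$ suggests the authors simply bound each factor below by $1/2$ once $p$ exceeds some modest point. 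This costs $2^{-\pi(z)}$, and the Rosser--Schoenfeld/Dusart bound $\pi(z)\leqslant \frac{z}{\log z}(1+\frac{1.2762}{\log z})$ then gives the stated exponential. The finitely many primes $11\leqslant p<p_0$ for which $1-2p^{-1/2}-3p^{-1}<1/2$ (roughly $p<37$) are handled numerically. Their product, times the compensating $2^{\#}$, gives the constant $1.56\cdot 10^{-3}$.

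\textbf{Main obstacle.} The main difficulty is the explicit bookkeeping that makes the large-prime tail produce exactly $2^{42z/(42-5s)}$ with no stray constant. This needs the correct elementary inequality for $\log(1-x)$ together with the integral comparison for $\sum_{p>z}p^{-\kappa}$. The small-prime part is mechanical once we decide to bound each factor by $1/2$ and invoke the explicit bound for $\pi(z)$.
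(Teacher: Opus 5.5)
Your plan is essentially the paper's proof. For $p>z$ your choice of $z$ gives $|T_m(p)-1|\leqslant\min\{\frac12,\,2Cp^{-\kappa}\}$, so your chord bound gives $c=4\log 2$. Since $z^{\kappa}=2C+1$, you have $Cz^{1-\kappa}=Cz/(2C+1)\leqslant z/2$, and comparison with $\int_z^\infty x^{-\kappa}\dd x$ then produces exactly $2^{42z/(42-5s)}$. Make that comparison for the sum over primes (e.g. via odd integers), as the paper does, not over all integers $n>z$: for non-integer $z$ your inequality $\sum_{n>z}n^{-\kappa}\leqslant z^{1-\kappa}/(\kappa-1)$ can fail by an amount of order $z^{-\kappa}=1/(2C+1)$, which after multiplying by $4C\log 2$ leaves a stray constant factor. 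The small primes are treated as you guess, via $210^{1-s}$, the factor $\frac12$ per prime, and Dusart's bound for $\pi(z)$.

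The remaining slips are numerical and harmless. First, $1-2p^{-1/2}-3p^{-1}\geqslant\frac12$ already holds for $p\geqslant 29$, not $37$. Second, $1.56\cdot 10^{-3}$ is simply $\prod_{11\leqslant p\leqslant 23}(1-2p^{-1/2}-3p^{-1})\approx 1.565\cdot 10^{-3}$, with no compensating power of $2$; the paper just overcounts the primes in $[29,z]$ by $\pi(z)$. Third, no finer bound is needed for the small primes: the true size $\exp(-c\sqrt{z}/\log z)$ of that product is far larger than $2^{-\pi(z)}$, so the cruder bound suffices.
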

\begin{proof}
Define $\theta_p := T_m(p)-1$ for $p$ prime and $w = \prod_{p > z} T_{m}(p)^{-1}$. From Lemma \ref{T_m(p) complete bound}, it follows that when  $p > z$, $\left|\theta_p\right|\leqslant\frac{1}{2}$.
Using Taylor expansion, we write 
\begin{align}\label{Taylor Expansion}
    \log w  
    &= - \sum_{p > z} \log(1 + \theta_{p})=\sum_{p > z} \left(\theta_{p} + \frac{\theta^{2}_{p}}{2} + \frac{\theta^{3}_{p}}{3} + \cdots \right).
\end{align}
Hence by Lemma \ref{T_m(p) complete bound} and \eqref{Taylor Expansion}, we have
\begin{align*}
    \log w &\leqslant 4 \log 2 \cdot e^{se^{89}}\sum_{p > z} p^{1-\frac{5s}{21}} \leqslant 4 \log 2 \cdot e^{se^{89}} \int^{\infty}_{z} x^{1-\frac{5s}{21}}  \dd x
= \frac{42z\log 2}{5s-42}.
\end{align*}
Exponentiating both sides of the above inequality gives $ w \leqslant  2^{\frac{42z}{5s-42}},$
which implies that
\begin{align}
\prod_{p > z} T_{m}(p)\geqslant 2^{\frac{42z}{42-5s}} .  \label{Tm(p) p>M}  
\end{align}
For $p\leqslant z$, Lemma \ref{T_m(p) lower bound} shows that
\begin{align}
\prod_{p \leqslant z} T_m(p) &\geqslant\prod_{p \leqslant z} \max \bigg\{p^{1-s},1-\frac{2}{\sqrt{p}}-\frac{3}{p}\bigg\}\notag\\
& \geqslant 210^{1-s}\exp\left(\sum_{11\leqslant p\leqslant z}\log\left(1-\frac{2}{\sqrt{p}}-\frac{3}{p}\right)\right)\notag\\
&\geqslant \num{1.56e-3}\cdot210^{1-s}\exp\left(-\sum_{29\leqslant p\leqslant z}\sum_{j=1}^\infty\frac{1}{j}\left(\frac{3+2\sqrt{p}}{p}\right)^j\right)\notag\\
&\geqslant\num{1.56e-3}\cdot 210^{1-s}\exp\left(-\frac{z \log 2}{\log z}\left(1+\frac{1.2762}{\log z}\right)\right) \label{Tm(p) p<= M},
\end{align}
where the last inequality follows from Dusart's upper bound for $\pi(x)$ (see \cite[Theorem 6.9]{dusart2010estimates}). Combining \eqref{Tm(p) p>M} and \eqref{Tm(p) p<= M}, we have
\begin{align*}
     \mathfrak{S}(m)&\geqslant \num{1.56e-3}\cdot210^{1-s}\cdot 2^{\frac{42z}{42-5s}} \exp\bigg(-\frac{z \log 2}{\log z}\bigg(1+\frac{1.2762}{\log z}\bigg)\bigg) >0,
\end{align*}
which completes the proof.
\end{proof}
\section{Major Arcs : The Singular Integral}\label{sec: Major Arcs Sec III}  Here we estimate the singular integral $J^{*}(m)$ given by \eqref{J* definition}. We assume $N^{\delta} \geqslant e^{e^{45}}$.
\subsection{Preliminaries} Let $N_0 =\frac{5}{2}N^3$ where $N$ is given by \eqref{Defining N}. Define
\begin{align}
    v_1(\theta):= \frac{1}{3}\sum_{1 \leqslant n \leqslant N_0} n^{-2/3}e(\theta n) \quad \textrm{and} \quad v_2(\theta) := \int_0^{N_0^{1/3}} e(\theta t^3) \dd t.\label{v_1 definition}
\end{align}
\begin{lem}\label{v_3(theta)-v_1(theta) bound lemma}
Let $v(\theta)$ and $v_1(\theta)$ be as defined in \eqref{v theta definition} and \eqref{v_1 definition} respectively. Then
\[\bigg \lvert v_1(\theta)-\left(\frac{5}{2}\right)^{1/3}v(\theta)\bigg \rvert \leqslant 48N^\delta.
\]
\end{lem}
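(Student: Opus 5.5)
The plan is to rescale the integral $v(\theta)$ into the same shape as the sum $v_1(\theta)$, and then compare a sum with an integral through a first-order Euler--Maclaurin (partial summation) estimate. Throughout, $\theta$ ranges over the major-arc window $|\theta|\leqslant N^{\delta-3}$ relevant to $J^*(m)$.

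\emph{Step 1: change of variables.} Put $c=(5/2)^{1/3}$ and substitute $u=ct$ in \eqref{v theta definition}. Since $N_0=\frac52N^3$, this gives
\begin{align*}
\Big(\frac52\Big)^{1/3}v(\theta)=\int_{c}^{N_0^{1/3}} e(\theta u^3)\dd u .
\end{align*}
Next substitute $w=u^3$, so that $\dd u=\frac13 w^{-2/3}\dd w$. This yields
\begin{align*}
\Big(\frac52\Big)^{1/3}v(\theta)=\frac13\int_{5/2}^{N_0} w^{-2/3}e(\theta w)\dd w .
\end{align*}
The claim therefore reduces to comparing $\frac13\sum_{1\leqslant n\leqslant N_0}F(n)$ with $\frac13\int_{5/2}^{N_0}F(w)\dd w$, where $F(w)=w^{-2/3}e(\theta w)$.

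\emph{Step 2: sum versus integral.} First extend the integral down to $w=1$. Since $|F|\leqslant 1$, this costs at most $\frac13\cdot\frac32$. Then apply the elementary estimate
\begin{align*}
\Big|\sum_{1\leqslant n\leqslant X}F(n)-\int_1^X F(w)\dd w\Big|\leqslant |F(1)|+|F(X)|+\int_1^X|F'(w)|\dd w .
\end{align*}
This follows from writing $F(n)-\int_n^{n+1}F=\int_n^{n+1}(F(n)-F(w))\dd w$ and bounding each term by the variation of $F$ on $[n,n+1]$. We have $F'(w)=-\frac23w^{-5/3}e(\theta w)+2\pi i\theta w^{-2/3}e(\theta w)$. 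Hence
\begin{align*}
\int_1^{N_0}|F'(w)|\dd w\leqslant 1+2\pi|\theta|\cdot 3N_0^{1/3}.
\end{align*}

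\emph{Step 3: using the size of $\theta$.} Since $|\theta|\leqslant N^{\delta-3}$ and $N_0^{1/3}=cN$, the last term is at most $6\pi c\,N^{\delta-2}$, which is tiny. Collecting the contributions, the total discrepancy is an absolute constant, a few units. This is well below $48N^\delta$ because $N^\delta\geqslant e^{e^{45}}$.

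The only point needing care is the bookkeeping of the endpoint and lower-limit terms, including the non-integer endpoint $N_0$ if $N_0\notin\mathbb Z$. There is no genuine obstacle: the bound $48N^\delta$ is very generous. Even the crude alternative of bounding $\int_1^{N_0}|\theta|w^{-2/3}\dd w$ without using the smallness of $\theta$ beyond $|\theta|\leqslant N^{\delta-3}$ fits inside it.
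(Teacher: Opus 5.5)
Your proof is correct, and it takes a genuinely different and sharper route than the paper. The paper introduces the intermediate integral $v_2(\theta)=\int_0^{N_0^{1/3}}e(\theta t^3)\dd t$ and bounds $|v_2-(5/2)^{1/3}v|$ by the missing piece $\int_0^{(5/2)^{1/3}}$. It then compares $v_1$ with $v_2$ by Abel summation against the phase $e(\theta t)$, whose only arithmetic input is the pointwise estimate $\bigl|\frac13\sum_{n\leqslant t}n^{-2/3}-t^{1/3}\bigr|\leqslant 3$. Integrating that pointwise error against $2\pi|\theta|$ over $[1,N_0]$ costs about $6\pi|\theta|N_0\leqslant 15\pi N^{\delta}$, which is exactly where the $48N^\delta$ comes from.

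You instead substitute $w=u^3$, so that both sides carry the same density $\frac13w^{-2/3}e(\theta w)$. You then apply a first-order Euler--Maclaurin estimate to the whole function $F(w)=w^{-2/3}e(\theta w)$, phase included. The oscillation is paid for only through $2\pi|\theta|\int_1^{N_0}w^{-2/3}\dd w\leqslant 6\pi|\theta|N_0^{1/3}$. So for $|\theta|\leqslant N^{\delta-3}$ your discrepancy is at most $\frac32+2\pi(5/2)^{1/3}N^{\delta-2}$, an absolute constant. To conclude you need only $N^\delta\geqslant 1$, not $N^\delta\geqslant e^{e^{45}}$. Your sum--integral inequality is also valid for non-integer $X$; Euler--Maclaurin gives it with every term halved.

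The paper's decomposition is the more mechanical one, since it only requires the asymptotic of a non-oscillating partial sum. Yours saves a factor $N^\delta$, which would sharpen \eqref{J_1*(m)-J*(m)} from $O(sN^{s-4+2\delta})$ to $O(sN^{s-4+\delta})$. This has no effect on Theorem~\ref{main Theorem for formula}, where the term $113sN^{5\delta+s-4}$ from Lemma~\ref{Approx 4} already dominates.
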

\begin{proof}We have
\begin{align}
   \bigg \lvert v_2(\theta) -\left(\frac{5}{2}\right)^{1/3}v(\theta) \bigg \rvert &\leqslant \int_0^{\sqrt[3]{\frac{5}{2}}} \left|e(\theta t^3)\right| \dd t=\left ( \frac{5}{2}\right)^{1/3}\label{|v_2(theta)-v_1(theta)|}.
\end{align}
Since $f(n) = n^{-2/3}$ is an decreasing function,
\begin{align*}
\int_1^{N_0+1}t^{-2/3}\dd t \leqslant \sum_{1 \leqslant n \leqslant N_0}n^{-2/3}\leqslant \int_0^{N_0}t^{-2/3} \dd t.
\end{align*}
Therefore, we get
\begin{align*}
\left|\frac{1}{3}\sum_{1 \leqslant n \leqslant N_0}n^{-2/3}-\frac{1}{3}\int_1^{N_0}t^{-2/3} \dd t\right| \leqslant \frac{2}{3} \left|\int_0^1 t^{-2/3} \dd t\right|= 2.
\end{align*}
Since $\frac{1}{3}\int_1^{N_0}t^{-2/3}dt=N_0^{1/3}-1,$ we obtain
\begin{align}
\bigg|\frac{1}{3}\sum_{1 \leqslant n \leqslant N_0} n^{-2/3}-N_0^{1/3}\bigg|\leqslant 3.\label{intermediate step}
\end{align}
By partial summation,
\begin{align}\label{Partial Summation v_1}
    v_1(\theta)=e(N_0\theta)\bigg(\frac{1}{3}\sum_{1 \leqslant n \leqslant N_0} n^{-2/3}\bigg)-2\pi i\theta\int_1^{N_0} \bigg(\frac{1}{3}\sum_{1 \leqslant n \leqslant t} n^{-2/3}\bigg)e(\theta t) \dd t.
\end{align}
On the other hand, using integration by parts and change of variables, we obtain
\begin{align}\label{By Parts v_2}
v_2(\theta) = e(N_0\theta)N_0^{1/3}-2\pi i\theta\int_0^{N_0} t^{1/3} e(\theta t) \dd t.
\end{align} 
Combining \eqref{intermediate step}, \eqref{Partial Summation v_1} and \eqref{By Parts v_2}, we deduce that
\begin{align}
\left|v_1(\theta)-v_2(\theta) \right| &\leqslant \bigg \lvert \frac{1}{3}\sum_{1 \leqslant n \leqslant N_0} n^{-2/3}-N_0^{1/3}\bigg \rvert +2\pi|\theta|\bigg (1+ \int_1^{N_0} \bigg \lvert t^{1/3} - \bigg(\frac{1}{3}\sum_{1 \leqslant n \leqslant t} n^{-2/3}\bigg)\bigg \rvert \dd t  \bigg) \notag\\
&\leqslant3+2\pi|\theta|(3N_0-2) \label{v_1-v_2}.
\end{align}
Recall that $|\theta|<N^{\delta-3}$. Putting together \eqref{|v_2(theta)-v_1(theta)|} and \eqref{v_1-v_2}, we have
\begin{align*}
\bigg \lvert v_1(\theta)-\left(\frac{5}{2}\right)^{1/3}v(\theta)\bigg \rvert \leqslant  3+\pi(6N_0-4)N^{\delta-3}+\sqrt[3]{5/2} \leqslant 48N^\delta,
\end{align*}
which completes the proof.
\end{proof}
\begin{lem}\label{V_1 theta Estimation}
Let $|\theta| \leqslant \frac{1}{2}$ and $v_1(\theta)$ be as in \eqref{v_1 definition}. Then 
\[
|v_1(\theta)|\leqslant \min \{2m^{1/3}, 2 |\theta|^{-1/3} \}.
\]
\end{lem}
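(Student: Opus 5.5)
We need to bound $v_1(\theta)=\frac13\sum_{n\le N_0} n^{-2/3}e(\theta n)$ in two ways: a trivial bound and a decay bound in $|\theta|$.

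For the first bound, use the triangle inequality together with \eqref{intermediate step}. This gives
\[
|v_1(\theta)|\leqslant \frac13\sum_{n\le N_0}n^{-2/3}\leqslant N_0^{1/3}.
\]
The comparison with $\int_0^{N_0}t^{-2/3}\dd t$ shows that in fact $\frac13\sum_{n\le N_0} n^{-2/3}\le N_0^{1/3}$. Now $N_0=\frac52N^3$ with $N=\lceil (2m/5)^{1/3}\rceil+1$, so $N_0^{1/3}=(5/2)^{1/3}N\leqslant m^{1/3}+2(5/2)^{1/3}$. This is at most $2m^{1/3}$ once $m$ is large, which holds since $N^\delta\ge e^{e^{45}}$ is assumed in this section.

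For the second bound, assume $|\theta|^{-1}< N_0$; otherwise the first bound already gives $|v_1|\le N_0^{1/3}\le|\theta|^{-1/3}$. Put $H=\lfloor|\theta|^{-1}\rfloor$ and split the sum at $H$. The initial segment $n\le H$ is bounded trivially by \eqref{intermediate step}-type comparison:
\[
\frac13\sum_{n\le H}n^{-2/3}\le H^{1/3}\le|\theta|^{-1/3}.
\]
For the tail $H<n\le N_0$, apply Abel summation with the decreasing weights $n^{-2/3}$. Lemma \ref{Geometric Sum} bounds the partial sums $\sum_{H<n\le t}e(\theta n)$ by $\frac{1}{2}\|\theta\|^{-1}=\frac{1}{2|\theta|}$, since $|\theta|\le\frac12$. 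Because the weights are monotone, the tail is at most
\[
\frac13\cdot (H+1)^{-2/3}\cdot 2\cdot\frac{1}{2|\theta|}\le \frac13|\theta|^{2/3}|\theta|^{-1}=\frac13|\theta|^{-1/3},
\]
using $H+1>|\theta|^{-1}$.

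Adding the two pieces gives $|v_1(\theta)|\le \frac43|\theta|^{-1/3}\le 2|\theta|^{-1/3}$. Taking the minimum of the two bounds yields the claim.

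The only delicate point is the constant in the Abel summation step. One must use the standard bound "max partial sum times first weight" for monotone weights (the factor 2 accounts for the boundary terms) and must not lose the factor $1/3$. The constants above leave ample room within the stated bound of $2$.
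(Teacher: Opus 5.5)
Your proof is correct and follows essentially the same route as the paper: a trivial bound by integral comparison, then a split at $\lfloor|\theta|^{-1}\rfloor$, with the head bounded trivially and the tail handled by Abel summation against the geometric-sum bound of Lemma \ref{Geometric Sum}. The differences are cosmetic. You keep the factor $\frac13$ (obtaining $\frac43|\theta|^{-1/3}$ where the paper settles for $2|\theta|^{-1/3}$), you split off the case $|\theta|^{-1}\geqslant N_0$ rather than $|\theta|\leqslant m^{-1}$, and you make explicit why $N_0^{1/3}\leqslant 2m^{1/3}$.
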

\begin{proof}
If $|\theta|\leqslant m^{-1}$, the trivial bound suffices. Assume $|\theta| > m^{-1}$ and $M=[ |\theta|^{-1}]$. Then the contribution to $v_1(\theta)$ from the terms $n\leqslant M$ is $\leqslant M^{1/3}\leqslant |\theta|^{-1/3}$. When $n> M$, let $S_n = \sum_{1 \leqslant r \leqslant n}e(\theta r)$ and $c_n = \frac{1}{3}n^{-2/3}$. 
Then we can write
\begin{align*}
 \frac{1}{3}\sum_{n=M+1}^{N_0} n^{-2/3}e(\theta n)=c_{N_0+1}S_{N_0}-c_{M+1}S_M+\sum_{n=M+1}^{N_0}(c_n-c_{n+1})S_n.
\end{align*}
By Lemma \ref{Geometric Sum}, $\lvert S_n\rvert\leqslant\frac{1}{2\lvert\theta\rvert}$. Since $c_n$ is strictly decreasing, we have
\begin{align*}
\frac{1}{3}\sum_{n=M+1}^{N_0} n^{-2/3}e(\theta n)&= -c_{M+1}S_M+\sum_{n=M+1}^{N_0-1}(c_n-c_{n+1})S_n+c_{N_0}S_{N_0}\\
&\leqslant c_{M+1}S_M+\sum_{n=M+1}^{N_0-1}(c_n-c_{n+1})\frac{1}{2\lvert\theta\rvert}+c_{N_0}\frac{1}{2\lvert\theta\rvert}\\
&\leqslant 2c_{M+1}\frac{1}{2\lvert\theta\rvert}\leqslant \lvert\theta\rvert^{-1/3}.
\end{align*} 
Combining the two parts, $|v_1(\theta)|\leqslant 2|\theta|^{-1/3}$ and the desired conclusion follows.
\end{proof}
\begin{lem}\label{gamma approximation}
    Suppose $\alpha,\beta$ are real numbers with $\alpha \geqslant \beta>0$ and $\beta<1$. Then
    \begin{align*}
        \left|\sum_{n=1}^{m-1} n^{\beta-1}(m-n)^{\alpha-1}-m^{\beta+\alpha-1}\bigg(\frac{\Gamma(\beta)\Gamma(\alpha)}{\Gamma(\beta+\alpha)}\bigg)\right|\leqslant \frac{2}{\beta}m^{\alpha-1}{}_2F_1\left(\beta,1-\alpha,1+\beta,\frac{1}{m} \right),
    \end{align*}
    where $\Gamma$ is the Gamma function and ${}_2F_1$ is the hypergeometric function. When $\beta=\frac{1}{3}$,     
\[  \left|\sum_{n=1}^{m-1} n^{\beta-1}(m-n)^{\alpha-1}-m^{\beta+\alpha-1}\bigg(\frac{\Gamma(\beta)\Gamma(\alpha)}{\Gamma(\beta+\alpha)}\bigg)\right| \leqslant 12m^{\alpha-1}.
\]
\end{lem}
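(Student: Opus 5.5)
The plan is to compare the sum with the Beta integral
\[
\int_0^m t^{\beta-1}(m-t)^{\alpha-1}\dd t = m^{\alpha+\beta-1}\frac{\Gamma(\alpha)\Gamma(\beta)}{\Gamma(\alpha+\beta)},
\]
obtained from the substitution $t=mu$ and the Beta function identity. The difference between the sum and the integral is then controlled by monotonicity of the summand on each unit interval. Write $F(t)=t^{\beta-1}(m-t)^{\alpha-1}$ on $(0,m)$. Since $\beta<1$, the factor $t^{\beta-1}$ is decreasing. The factor $(m-t)^{\alpha-1}$ is decreasing if $\alpha\geqslant 1$ and increasing if $\alpha<1$. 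So $F$ is either monotone or unimodal on $(0,m)$: it has at most one interior extremum, plus the boundary singularities.

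First, I would handle the singular endpoint near $0$ separately:
\[
\int_0^1 F(t)\dd t=\int_0^1 t^{\beta-1}(m-t)^{\alpha-1}\dd t .
\]
Expanding $(m-t)^{\alpha-1}=m^{\alpha-1}(1-t/m)^{\alpha-1}$ and integrating termwise against $t^{\beta-1}$ gives exactly
\[
\frac{m^{\alpha-1}}{\beta}\,{}_2F_1\!\left(\beta,1-\alpha;1+\beta;\tfrac1m\right),
\]
by Euler's integral representation. This is where the hypergeometric term in the statement comes from. The first summand $n=1$ satisfies $F(1)=(m-1)^{\alpha-1}$. This is bounded by the same quantity, up to the factor $2$, after comparing $(1-1/m)^{\alpha-1}$ with the series, or directly since $\beta<1$.

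Next, on $[1,m-1]$ I would use the standard comparison for a piecewise monotone function. For a function with at most one turning point, $\bigl|\sum_{n=1}^{m-1}F(n)-\int_1^{m}F\bigr|$ is at most a couple of maximal values of $F$ on the relevant intervals. The relevant values are $F(1)$ and the values near $t=m$, together with the integral over $[m-1,m]$. Because $\alpha\geqslant\beta$, the endpoint $m$ is no worse than the endpoint $0$: one has $\int_{m-1}^m F\leqslant \int_0^1$-type quantities with the roles of $\alpha,\beta$ swapped. Each resulting term is then bounded by $\frac{1}{\beta}m^{\alpha-1}{}_2F_1(\cdot)$. Collecting the pieces gives the constant $2/\beta$.

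For $\beta=1/3$, the factor $2/\beta=6$. I would then bound ${}_2F_1(\tfrac13,1-\alpha;\tfrac43;\tfrac1m)\leqslant 2$, using the series with argument $1/m\leqslant 1/2$ and coefficients at most geometrically growing for the relevant range of $\alpha$ (in practice $\alpha=(s-1)/3$ is fixed). This yields the constant $12$.

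The main obstacle is the bookkeeping of the unimodal comparison near $t=m$ when $\alpha<1$, where $F$ blows up. I would handle this by symmetry $t\mapsto m-t$, using $\alpha\geqslant\beta$ to dominate it by the $t\to0$ endpoint term.
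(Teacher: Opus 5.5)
Your route is the paper's: compare with the Beta integral, use the single turning point of $F$, write $\int_0^1F=\frac1\beta m^{\alpha-1}{}_2F_1(\beta,1-\alpha;1+\beta;\frac1m)$ via the incomplete Beta function, and use $\int_{m-1}^mF=\int_0^1u^{\alpha-1}(m-u)^{\beta-1}\,\mathrm{d}u\leqslant\int_0^1F$ because $(u/(m-u))^{\alpha-\beta}\leqslant1$. However, two steps do not deliver the constants the statement asserts.

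The first is the bookkeeping. You bound four pieces by $\frac1\beta m^{\alpha-1}{}_2F_1$ each: $\int_0^1F$, $F(1)$, a value near $t=m$ (it must be $F(m-1)$, since $F$ is unbounded near $m$ when $\alpha<1$), and $\int_{m-1}^mF$. Adding them gives up to $4/\beta$, and nothing you describe justifies ``collecting the pieces gives $2/\beta$''. The missing point is that the error is one-signed and contains no point values. For $\alpha\geqslant1$, $F$ is decreasing. For $\alpha<1$, $(\log F)''=\frac{1-\beta}{t^2}+\frac{1-\alpha}{(m-t)^2}>0$, so $F$ decreases on $(0,Y]$ and increases on $[Y,m)$. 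Cite this convexity: being a product of a decreasing and an increasing function does not by itself force a single turning point. Hence $F(n)\leqslant\int_{n-1}^nF$ for $n\leqslant Y$ and $F(n)\leqslant\int_n^{n+1}F$ for $n>Y$, so $\sum_{n=1}^{m-1}F(n)\leqslant\int_0^mF$. The reverse comparisons, with the unit interval containing $Y$ covered by the larger of its two endpoint values, give $\sum_{n=1}^{m-1}F(n)\geqslant\int_1^{m-1}F$. Therefore $0\leqslant\int_0^mF-\sum_{n=1}^{m-1}F(n)\leqslant\int_0^1F+\int_{m-1}^mF\leqslant2\int_0^1F$, which is the paper's estimate. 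In your decomposition, this is the observation that $F(1)\leqslant\int_0^1F$ enters with the sign opposite to $-\int_0^1F$, so the two cancel rather than add.

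The second is the bound ${}_2F_1(\frac13,1-\alpha;\frac43;\frac1m)<2$. It is needed for every $\alpha\geqslant\frac13$ and every $m\geqslant2$: the lemma states it that way, and the paper applies it with $\alpha=s/3$ for every $s$ and every $m$ inside the induction estimating $J_1(m,s)$. So restricting to a fixed $\alpha$ does not prove it, and the series is the wrong tool anyway. Its $k$-th term is $\frac{(-1)^k}{3k+1}\binom{\alpha-1}{k}m^{-k}$, so for fixed $m$ the sum of absolute values grows exponentially in $\alpha$. At $m=2$, $\alpha=10$ it is about $5.4$, while the actual value is about $0.53$; the smallness comes entirely from cancellation. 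Use instead the Euler integral you already invoked,
\[ {}_2F_1(\beta,1-\alpha;1+\beta;x)=\beta x^{-\beta}\int_0^x u^{\beta-1}(1-u)^{\alpha-1}\,\mathrm{d}u, \]
which is at most $1$ for $\alpha\geqslant1$, and at most $(1-x)^{\alpha-1}\leqslant2^{1-\alpha}\leqslant2^{2/3}<2$ for $\frac13\leqslant\alpha<1$ and $x=\frac1m\leqslant\frac12$. Combined with the one-sided comparison, this gives $\frac{2}{\beta}\cdot2=6\cdot2=12$ for $\beta=\frac13$.
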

\begin{proof}
Consider the function $\phi(t)=t^{\beta-1}(m-t)^{\alpha-1}$ in the interval $(0,m)$. Since $\phi$ has at most one stationary point, we can divide $(0,m)$ into two intervals $(0, Y]$ and $(Y,m)$ (one of which might be empty) such that $\phi$ is decreasing on $(0, Y]$ and increasing on $(Y,m)$. Then
\begin{align}\label{Phi Integral 1}
    \left|\sum_{n=1}^{m-1}\phi(n)-\int_0^m\phi(t)\dd t\right|
    &\leqslant \int_0^{1}\phi(t) \dd t+\int_{m-1}^{m}\phi(t)\dd t \leqslant 2\int_{0}^{1}t^{\beta-1}(m-t)^{\alpha-1}\dd t.
\end{align}
Let $B$ be the incomplete Beta function. We have
\begin{align}
   \int_{0}^{1}t^{\beta-1}(m-t)^{\alpha-1}\dd t
&=m^{\alpha+\beta-1}\int_{0}^{1/m}u^{\beta-1}(1-u)^{\alpha-1}\dd u =m^{\alpha+\beta-1} B\left (\frac{1}{m};\beta,\alpha\right ). \label{Phi Integral 2}
\end{align}
Note, $B(\frac{1}{m};\beta,\alpha)=\frac{1}{\beta}m^{-\beta} {}_2F_1(\beta,1-\alpha,1+\beta,\frac{1}{m})$ 
where ${}_2F_1$ is the hypergeometric function and is finite because $1+\beta$ is not a nonpositive integer. Hence combining \eqref{Phi Integral 1} and \eqref{Phi Integral 2}, 
\begin{align}\label{Phi Integral 4}
\left|\sum_{n=1}^{m-1}\phi(n)-\int_0^m\phi(t)\dd t\right|\leqslant \frac{2}{\beta}m^{\alpha-1}{}_2F_1\bigg (\beta,1-\alpha,1+\beta,\frac{1}{m}\bigg).
\end{align}
By the properties of the Gamma function, we have 
\begin{align*}
    \frac{\Gamma(\beta)\Gamma(\alpha)}{\Gamma(\beta+\alpha)}=\int_0^1t^{\beta-1}(1-t)^{\alpha-1} \dd t.
\end{align*}
It follows that
\begin{align}
 \int_0^m\phi(t) \dd t &= \int_0^m t^{\beta-1}(m-t)^{\alpha-1} \dd t =\frac{\Gamma(\beta)\Gamma(\alpha)}{\Gamma(\beta+\alpha)}m^{\beta+\alpha-1}\label{Phi Integral 5}.
\end{align}
Putting together \eqref{Phi Integral 1}, \eqref{Phi Integral 4}, and \eqref{Phi Integral 5}, the first assertion follows. In particular, when $\beta=\frac{1}{3}$, ${}_2F_1\left(\beta,1-\alpha,1+\beta,\frac{1}{m}\right) 
<2.$ 
Substituting this in \eqref{Phi Integral 4}, we obtain the second assertion.
\end{proof}
\subsection{Completing the Singular Integral} First, consider the integral
\begin{align}
J_1^*(m) &:= \int_{-N^{\delta-3}}^{N^{\delta-3}} v_1(\theta)^se(-\theta m) \dd \theta. \label{J_1* definition}
\end{align}
Therefore, by Lemma \ref{v_3(theta)-v_1(theta) bound lemma},
\begin{align}
|J_1^*(m)-J^*(m)|& \leqslant \int_{-N^{\delta-3}}^{N^{\delta-3}}\bigg \lvert v_1(\theta)^s-\left(\left(\frac{5}{2}\right)^{1/3}v(\theta)\right)^s\bigg \rvert \dd\theta\notag\\
&\leqslant 48N^{\delta}\int_{-N^{\delta-3}}^{N^{\delta-3}}\sum_{j=1}^s \lvert v_1(\theta) \rvert^{s-j}\cdot \bigg \lvert \left(\frac{5}{2}\right)^{1/3}v(\theta)\bigg \rvert^{j-1} \dd\theta\notag\\
&\leqslant 96sN^\delta N_0^{(s-1)/3}N^{\delta-3} \leqslant 72s\left(\frac{5}{2}\right)^{s/3}N^{s-4+2\delta}\label{J_1*(m)-J*(m)}.
\end{align}
\noindent Now, we extend the integral $J^{*}_1(m)$ to an integral over an unit interval. We define
\begin{align}
J_1(m) := \int_{-1/2}^{1/2}v_1(\theta)^se(-\theta m)\dd\theta.\label{J_1(m)}
\end{align}
We will approximate $J^{*}(m)$ with $J_1(m)$. By Lemma \ref{V_1 theta Estimation} and that $N^{\delta-3}< \frac{1}{2}$, we have
\begin{align}
|J_1(m)-J_1^*(m)| &\leqslant \bigg|\int_{-1/2}^{-N^{\delta-3}} v_1(\theta)^se(-\theta m)\dd\theta\bigg|+\bigg|\int_{N^{\delta-3}}^{1/2} v_1(\theta)^se(-\theta m)\dd\theta\bigg|\notag\\
    &\leqslant 2\int_{N^{\delta-3}}^{1/2}\bigg(\min \bigg \{2m^{1/3}, 2 |\theta|^{-1/3} \bigg \}\bigg)^s \dd\theta\notag\\
    &\leqslant 2^{s+1}\int_{N^{\delta-3}}^{1/2}\theta^{-s/3}\dd\theta \leqslant 2^{s+1}\frac{3}{s-3}N^{\frac{3\delta-s\delta+3s-9}{3}}.\label{J_1(m)-J_^*(m)}
\end{align}
Combining \eqref{J_1*(m)-J*(m)} with \eqref{J_1(m)-J_^*(m)}, it follows that
\begin{align}
|J_1(m)-J^*(m)| \leqslant 72s\left(\frac{5}{2}\right)^{s/3}N^{s-4+2\delta}+\frac{2^s \cdot 6}{s-3}N^{\frac{3\delta-s\delta+3s-9}{3}}\label{J_1(m)-J^*(m)}.
\end{align} 
We now prove the following lemma.
\begin{lem}\label{J_1(m) approximation bound}
Let $s\geqslant 2$. Then 
\begin{align*}
\bigg|J_1(m,s)-\Gamma\bigg(\frac{4}{3}\bigg)^s\Gamma\bigg(\frac{s}{3}\bigg)^{-1}m^{s/3-1}\bigg|\leqslant 10^{s-2}m^{(s-1)/3-1}.
\end{align*}

\end{lem}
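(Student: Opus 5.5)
We argue by induction on $s$, using the orthogonality structure of $v_1$. Since $v_1(\theta)=\frac13\sum_{1\leqslant n\leqslant N_0} n^{-2/3}e(\theta n)$ is a trigonometric polynomial, expanding $v_1(\theta)^s$ and integrating over $[-1/2,1/2]$ gives the exact identity
\begin{align*}
J_1(m,s)=3^{-s}\sum_{\substack{n_1+\cdots+n_s=m\\ 1\leqslant n_i\leqslant N_0}} (n_1\cdots n_s)^{-2/3}.
\end{align*}
Since $m\leqslant N_0$ by the choice of $N$, the constraint $n_i\leqslant N_0$ is automatic. Hence $J_1(m,s)=:R_s(m)$ depends only on $m$ and $s$, and satisfies the recursion
\begin{align*}
R_1(m)=\tfrac13 m^{-2/3},\qquad R_{s}(m)=\sum_{n=1}^{m-1}\tfrac13 n^{-2/3}R_{s-1}(m-n).
\end{align*}
The target main term is $c_s m^{s/3-1}$ with $c_s=\Gamma(4/3)^s/\Gamma(s/3)$. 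Note that $\Gamma(4/3)=\frac13\Gamma(1/3)$, so $c_s=3^{-s}\Gamma(1/3)^s/\Gamma(s/3)$, which is exactly the constant produced by iterating Beta integrals.

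\textbf{Base case $s=2$.} Apply Lemma \ref{gamma approximation} with $\alpha=\beta=\frac13$. This gives
\begin{align*}
\Bigl|\tfrac19\sum n^{-2/3}(m-n)^{-2/3}-\tfrac19\Gamma(1/3)^2\Gamma(2/3)^{-1}m^{-1/3}\Bigr|\leqslant \tfrac{12}{9}m^{-2/3}\leqslant m^{-2/3},
\end{align*}
which is the claim with constant $10^0$.

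\textbf{Inductive step.} Assume the bound for $s-1$, that is, $R_{s-1}(k)=c_{s-1}k^{(s-1)/3-1}+E(k)$ with $|E(k)|\leqslant 10^{s-3}k^{(s-2)/3-1}$. Substituting into the recursion gives
\begin{align*}
R_s(m)=\tfrac13 c_{s-1}\sum_{n=1}^{m-1} n^{-2/3}(m-n)^{(s-1)/3-1}+\tfrac13\sum_{n=1}^{m-1} n^{-2/3}E(m-n).
\end{align*}
For the first sum, use Lemma \ref{gamma approximation} with $\beta=\frac13$ and $\alpha=\frac{s-1}{3}$. The hypothesis $\alpha\geqslant\beta$ holds since $s\geqslant 2$. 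This yields the main term
\begin{align*}
\tfrac13 c_{s-1}\,\frac{\Gamma(1/3)\Gamma((s-1)/3)}{\Gamma(s/3)}\,m^{s/3-1}=c_s m^{s/3-1},
\end{align*}
with error at most $4c_{s-1}m^{(s-1)/3-1}$.

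The second sum is bounded by
\begin{align*}
\tfrac13 10^{s-3}\sum_{n=1}^{m-1} n^{-2/3}(m-n)^{(s-2)/3-1}.
\end{align*}
This is handled by the same Beta-type estimate: for $s\geqslant 4$ apply Lemma \ref{gamma approximation} again, and for $s=3$ use a direct comparison with the integral. The result is at most $C\,10^{s-3}m^{(s-1)/3-1}$, where $C=\frac13 B(\tfrac13,\tfrac{s-2}{3})+4$.

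\textbf{Main obstacle.} The real work is the explicit constant bookkeeping: we must check that
\begin{align*}
4c_{s-1}+\Bigl(\tfrac13 B\bigl(\tfrac13,\tfrac{s-2}{3}\bigr)+4\Bigr)10^{s-3}\leqslant 10^{s-2}.
\end{align*}
This follows from two facts.
\begin{itemize}
\item $c_{s-1}\leqslant 1$, since $\Gamma(4/3)<1$ and $\Gamma(x)^{-1}$ is bounded by about $1.13$. A slightly sharper check is needed for small $s$, where $\Gamma((s-1)/3)$ is large, which only helps.
\item $B(\tfrac13,\tfrac{s-2}{3})\leqslant B(\tfrac13,\tfrac13)\approx 5.3$.
\end{itemize}
Together these give roughly $4+6\cdot 10^{s-3}\leqslant 10^{s-2}$.

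Two boundary points need care. First, the case $s=3$, where $\alpha-1=-\frac23<0$ and the endpoint singularity at $n=m$ must be handled symmetrically. Second, the endpoint term $n$ near $m$ in the error sum, where $k=m-n$ is small; there the inductive bound for $E(k)$ still applies, since it holds for all $k\geqslant1$.
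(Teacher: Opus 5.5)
Your proposal follows the paper's proof essentially step for step:
\begin{itemize}
\item the exact convolution identity for $J_1(m,s)$;
\item induction on $s$ starting from $s=2$;
\item Lemma~\ref{gamma approximation} with $\beta=\frac13$, applied once to the main term (error $4c_{s-1}m^{(s-1)/3-1}$) and once to the propagated error (constant $\frac13B(\frac13,\frac{s-2}{3})+4$);
\item the same final check $4c_{s-1}+\bigl(\frac13B(\frac13,\frac{s-2}{3})+4\bigr)10^{s-3}\leqslant 10^{s-2}$.
\end{itemize}
Your remark that $m\leqslant N_0$ makes the truncation $n_i\leqslant N_0$ harmless is correct, and the paper leaves it implicit. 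It means $J_1(k,s)$ depends only on $k$ and $s$, which is what the recursion needs.

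There is, however, one false step. In the base case you write $\frac{12}{9}m^{-2/3}\leqslant m^{-2/3}$, but $\frac{12}{9}=\frac43>1$. The paper's written proof makes the identical slip: it derives $\frac43 m^{-2/3}$ and then declares the case $s=2$ settled.

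The slip is not harmless. If you carry the constant $\frac43$ into your step $s=3$, your own bookkeeping gives
\[
4c_2+\Bigl(\tfrac13B\bigl(\tfrac13,\tfrac13\bigr)+4\Bigr)\cdot\tfrac43\approx 2.356+7.689>10,
\]
so the induction breaks right there.

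The repair is to avoid the crude estimate ${}_2F_1<2$ in the base case.
\begin{itemize}
\item For $m\geqslant 3$: since $\alpha=\beta=\frac13$, the U-shaped comparison in the proof of Lemma~\ref{gamma approximation} bounds the error by $\frac29\int_0^1 t^{-2/3}(m-t)^{-2/3}\dd t\leqslant \frac23(m-1)^{-2/3}$. This is at most $m^{-2/3}$ for every $m\geqslant 3$.
\item For $m=1,2$: check directly. $J_1(1,2)=0$ and $J_1(2,2)=\frac19$, against the main term $c_2m^{-1/3}$ with $c_2\approx 0.589$. The errors are about $0.589\leqslant 1$ and $0.356\leqslant 2^{-2/3}$.
\end{itemize}
The small values of $m$ genuinely matter: as you note, the inductive step uses the bound at every $k=m-n\geqslant 1$. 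With the base constant $1$ restored, your check at $s=3$ reads $\approx 2.356+5.767\leqslant 10$, and the rest of your argument goes through.

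Finally, $s=3$ needs no special treatment. The error sum there has $\alpha=\beta=\frac13$, which Lemma~\ref{gamma approximation} allows; you use exactly this case in your own base step.
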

\begin{proof}
We write
\begin{align*}
    J_1(m)=J_1(m,s) &= \int_{-1/2}^{1/2}v_1(\theta)^se(-\theta m) \dd\theta\\
&=3^{-s}\sum_{n_1=1}^{N_0}\cdots\sum_{n_s=1}^{N_0}(n_1n_2\cdots n_s)^{-2/3}\int_{-1/2}^{1/2}e(n_1+\cdots+n_s-m) \dd\theta\\
    &=3^{-s}\mathop{\sum_{n_1=1}^{N_0}\cdots\sum_{n_s=1}^{N_0}}_{n_1+\cdots+n_s=m} (n_1n_2\cdots n_s)^{-2/3}.
\end{align*}
When $s=2$, 
\begin{align*}
J_1(m,2)&=3^{-2}\mathop{\sum_{n_1=1}^{m-1}\sum_{n_2=1}^{m-1}}_{n_1+n_2=m}(n_1n_2)^{-2/3}=3^{-2}\sum_{n_1=1}^{m-1} n_1^{-2/3}(m-n_1)^{-2/3}.
\end{align*}
Applying Lemma \ref{gamma approximation} with $\alpha=\beta=\frac{1}{3}$, we deduce that
\begin{align*}
&\bigg|J_1(m,2)-3^{-2}m^{-1/3}\frac{\Gamma(\frac{1}{3})\Gamma(\frac{1}{3})}{\Gamma(\frac{2}{3})} \bigg|\leqslant \frac{4}{3}m^{-2/3}.
\end{align*}
Rewriting the above, we obtain
\[
\bigg|J_1(m,2)-\Gamma\bigg(\frac{4}{3}\bigg)^2\Gamma\bigg(\frac{2}{3}\bigg)^{-1}m^{-1/3}\bigg| \leqslant \frac{4}{3}m^{-2/3},
\]
and thus the lemma holds for $s=2$. Now, suppose the lemma holds for some $s\geqslant 2$. Note that 
\begin{align*}
    J_1(m,s+1)=\frac{1}{3}\sum_{n=1}^{m-1}n^{-2/3}J_1(m-n,s).
\end{align*}
By assumption,
\begin{align} \label{J1 bound Step 1}
\bigg|J_1(m-n,s)-\Gamma\bigg(\frac{4}{3}\bigg)^s\Gamma\bigg(\frac{s}{3}\bigg)^{-1}(m-n)^{s/3-1}\bigg|\leqslant 10^{s-2}(m-n)^{(s-1)/3-1}.
\end{align}
Moreover, by Lemma \ref{gamma approximation} with $\beta=1/3$ and $\alpha=s/3$,
\begin{align}\label{J1 bound Step 2}
\bigg|\frac{1}{3} \sum_{n=1}^{m-1}n^{-2/3}\Gamma\bigg(\frac{4}{3}\bigg)^s\Gamma\bigg(\frac{s}{3}\bigg)^{-1}(m-n)^{s/3-1}  &-\Gamma\bigg(\frac{4}{3}\bigg)^{s+1}\Gamma\bigg(\frac{s+1}{3}\bigg)^{-1}m^{(s+1)/3-1}\bigg| \notag \\
&\leqslant 4 \cdot \Gamma\bigg(\frac{4}{3}\bigg)^s\Gamma\bigg(\frac{s}{3}\bigg)^{-1}m^{s/3-1}.
\end{align}
Therefore combining \eqref{J1 bound Step 1} and \eqref{J1 bound Step 2}, and applying Lemma \ref{gamma approximation} we obtain
\begin{align*}
\bigg|J_1(m,s+1)&-\Gamma\bigg(\frac{4}{3}\bigg)^{s+1}\Gamma\bigg(\frac{s+1}{3}\bigg)^{-1}m^{(s+1)/3-1}\bigg|\\
&\leqslant \frac{10^{s-2}}{3}\sum_{n=1}^{m-1}n^{-2/3}(m-n)^{(s-1)/3-1}+4 \cdot \Gamma\bigg(\frac{4}{3}\bigg)^s\Gamma\bigg(\frac{s}{3}\bigg)^{-1}m^{s/3-1}\\
&\leqslant \frac{10^{s-2}}{3}m^{s/3-1}\Gamma\bigg(\frac{1}{3}\bigg)\Gamma\bigg(\frac{s-1}{3}\bigg)\Gamma\bigg(\frac{s}{3}\bigg)^{-1}+4\cdot10^{s-2}m^{(s-1)/3-1}\\
&\quad +4 \cdot \Gamma\bigg(\frac{4}{3}\bigg)^s\Gamma\bigg(\frac{s}{3}\bigg)^{-1}m^{s/3-1}
\leqslant 10^{s-1}m^{s/3-1}.
\end{align*}
By induction, the result follows.
\end{proof}
\section{Asymptotic Results for Representations as Sums of Icosahedral Numbers}\label{sec: Proof of Main Theorem}
Our goal in this section is to prove Theorem \ref{Theorem: Representation Icosahedral}. In fact, we will establish more general versions of Theorem \ref{Theorem: Representation Icosahedral}. Our first result is as follows.
\begin{thm}\label{main Theorem for formula}
For $m,s \in \mathbb{N}$, let $\mathcal{I}_s(m)$ denote the number of ways of representing $m$ as the sum of $s$ icosahedral numbers. Then for any $s\geqslant 9$, $0<\delta<\frac{1}{5}$, and $m>\frac{5}{2}(e^{e^{45}})^{3/\delta}$,
\begin{align}
\bigg|\mathcal{I}_s(m)& -\bigg(\frac{2}{5}\bigg)^{\frac{s}{3}}\mathfrak{S}(m)\Gamma\bigg(\frac{4}{3}\bigg)^{s}\Gamma\bigg(\frac{s}{3}\bigg)^{-1}m^{\frac{s}{3}-1}\bigg| \notag \\
     &\leqslant 113s\left(\frac{m}{2}\right)^{\frac{5\delta+s-4}{3}}+ \frac{(6\cdot 2^s+2\left(\frac{5}{4}\right)^{\frac{s}{3}}(s-3))\cdot 24^s\cdot\left(\frac{2}{5}\right)^{\frac{21s-(5s-42)\delta}{63}}}{(s-3)(\frac{5s}{21}-2)}m^{\frac{(42-5s)\delta+21s-63}{63}}\notag\\
     &\quad+182 s\left(\frac{1}{2}\right)^{\frac{s}{3}}e^{e^{46}} m^{\frac{s-4+2\delta}{3}}+\left(\frac{1}{5}\right)^{\frac{s}{3}}\left(\frac{1}{2}\right)^{\frac{3\delta-s\delta-9}{9}}\frac{6e^{e^{46}}2^s}{s-3}m^{\frac{3\delta-s\delta+3s-9}{9}}\notag\\
     &\quad+ \left( \frac{2}{5} \right)^{\frac{s}{3}} \cdot 10^{s-2} \cdot e^{e^{46}} m^{\frac{s-4}{3}}+152 \cdot 21^{s-8} \cdot m^{\frac{s}{3}-1-\frac{\delta(s-8)}{12}+\frac{0.53305(s-8)+6.3966}{1.2\log \log m}+\frac{(s-8)\log\log m}{4\log m}} \label{Icosahedral 1st Theorem Eq},
\end{align}
where $\mathfrak{S}(m)$ satisfies \eqref{Sigma Bound} and $ z=(2 e^{se^{89}}+1)^{\frac{21}{5s - 21}}.$
\end{thm}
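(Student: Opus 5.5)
The plan is to start from the decomposition \eqref{Major+Minor Arcs}. The minor arc integral is handled directly by Theorem \ref{Minor Arcs : thm}. That bound carries $(\log m)^{\frac{s-8}{4}}$, which I will rewrite as $m^{\frac{(s-8)\log\log m}{4\log m}}$; this produces the last term of \eqref{Icosahedral 1st Theorem Eq}. The hypothesis $m>\frac52(e^{e^{45}})^{3/\delta}$ guarantees $N^\delta\geqslant e^{e^{45}}$, since $N\geqslant (2m/5)^{1/3}$. This is exactly the condition needed to invoke Lemma \ref{Singular Series Extension} and the results of Section \ref{sec: Major Arcs Sec III}. The condition $\delta<\frac15$ ensures $N^{3\delta-3}<\frac12$ and that the major arc errors are genuine power savings.

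For the major arcs I will chain four approximations, tracking each error separately.

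\textbf{Step 1.} Lemma \ref{Approx 4} replaces $\int_{\mathfrak M}f^s e(-\alpha m)$ by $\mathcal I_s^*(m)=(\frac25)^{s/3}\mathfrak S(m,N^\delta)J^*(m)$, as in \eqref{Approximating Major Arc Integral}, at cost $113sN^{5\delta+s-4}$. Converting $N$ to $m$ gives the first term.

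\textbf{Step 2.} Replace $\mathfrak S(m,N^\delta)$ by $\mathfrak S(m)$ using Lemma \ref{Singular Series Extension}. The resulting error is multiplied by $|J^*(m)|$, which I will bound trivially by $2N^{\delta-3}\cdot((\frac52)^{1/3}N)^s$ or via $J_1$. Here $N^{-(\frac{5s}{21}-2)\delta}$ combines with powers of $N$ to give the $m^{\frac{(42-5s)\delta+21s-63}{63}}$ term; the precise constants will be matched at the end.

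\textbf{Step 3.} Replace $J^*(m)$ by $J_1(m)$ using \eqref{J_1(m)-J^*(m)}, now multiplied by $|\mathfrak S(m)|\leqslant e^{e^{46}}$. This yields the two terms carrying $e^{e^{46}}$: the $N^{s-4+2\delta}$ term and the $N^{(3\delta-s\delta+3s-9)/3}$ term.

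\textbf{Step 4.} Replace $J_1(m)$ by $\Gamma(\frac43)^s\Gamma(\frac s3)^{-1}m^{s/3-1}$ using Lemma \ref{J_1(m) approximation bound}. The error $10^{s-2}m^{(s-1)/3-1}=10^{s-2}m^{\frac{s-4}{3}}$, times $(\frac25)^{s/3}e^{e^{46}}$, gives the fifth term. Note that the part of the $2^s$ piece of Step 2's error coming from $J_1-J^*$ may also contribute to the second term; I will absorb it there, which explains the combination $6\cdot 2^s+2(\frac54)^{s/3}(s-3)$.

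Finally, the triangle inequality assembles all pieces, and the positivity and lower bound for $\mathfrak S(m)$ come from Lemma \ref{lower bound for S(m)}. The main obstacle is not conceptual but bookkeeping. One must convert every power of $N$ to a power of $m$ with the correct direction of inequality. For exponents that are positive I use $N\leqslant (2m/5)^{1/3}+2\leqslant (m/2)^{1/3}$ for large $m$; for negative exponents I use $N\geqslant(2m/5)^{1/3}$. The product $|\mathfrak S(m)-\mathfrak S(m,N^\delta)|\cdot|J^*(m)|$ is where I expect the constants to be most delicate. I would first bound $|J^*|\leqslant |J_1|+|J_1-J^*|$ with $|J_1|\ll \Gamma(\frac43)^s m^{s/3-1}$ (note $\Gamma(\frac43)^s\Gamma(\frac s3)^{-1}\leqslant1$), and then check that the resulting constant fits within the stated one.
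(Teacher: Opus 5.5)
Your chain of approximations is exactly the paper's: Lemma~\ref{Approx 4}; then $\mathfrak S(m,N^\delta)\to\mathfrak S(m)$ weighted by $|J^*(m)|$; then $J^*\to J_1$ weighted by $|\mathfrak S(m)|\leqslant e^{e^{46}}$; then Lemma~\ref{J_1(m) approximation bound}; and finally Theorem~\ref{Minor Arcs : thm} with $(\log m)^{(s-8)/4}=m^{(s-8)\log\log m/(4\log m)}$. Your handling of the first, third, fourth, fifth and sixth terms matches the paper's.

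The gap is in the one input that is not already a lemma, namely the bound for $|J^*(m)|$. Your explanation of the constant $6\cdot 2^s+2(\frac54)^{s/3}(s-3)$ as an absorbed piece of $J_1-J^*$ is also not where it comes from.

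Your first option is the trivial bound $|J^*(m)|\leqslant 2(\frac52)^{s/3}N^{s+\delta-3}$. It loses a factor $N^{\delta}$ and yields the exponent $\frac s3-1-\frac{(5s-63)\delta}{63}$ instead of $\frac s3-1-\frac{(5s-42)\delta}{63}$. For $s\leqslant 12$ this error is even larger than the order of the main term.

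Your second option is $|J^*|\leqslant|J_1|+|J_1-J^*|$. It gives the right order $m^{s/3-1}$, but its constant contains $10^{s-2}m^{-1/3}$ from Lemma~\ref{J_1(m) approximation bound}, and also $72s(\frac52)^{s/3}N^{s-4+2\delta}m^{1-s/3}$ from \eqref{J_1(m)-J^*(m)}. The first of these is dominated by $\frac{6\cdot 2^s}{s-3}+2(\frac54)^{s/3}$ only when roughly $5^s\lesssim m^{1/3}$. That is fine for a fixed $s$ such as $s=9$. However, the theorem is asserted for every $s\geqslant 9$, so the constant check you defer would fail in part of the range.

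What is missing is a direct estimate of $J^*(m)$. Write $(\frac52)^{1/3}v(\theta)=\int_{(5/2)^{1/3}}^{(5/2)^{1/3}N}e(\theta u^3)\dd u$, substitute $w=u^3$, and argue as in Lemma~\ref{V_1 theta Estimation}. This gives $|(\frac52)^{1/3}v(\theta)|\leqslant\min\{(\frac52)^{1/3}N,\,2|\theta|^{-1/3}\}$. Splitting at $|\theta|=1/m$ and using $N\leqslant(m/2)^{1/3}$,
\[
|J^*(m)|\leqslant\frac2m\Big(\frac52\Big)^{\frac s3}\Big(\frac m2\Big)^{\frac s3}+2\int_{1/m}^{\infty}\big(2\theta^{-1/3}\big)^s\dd\theta=\Big(2\Big(\frac54\Big)^{\frac s3}+\frac{6\cdot 2^s}{s-3}\Big)m^{\frac s3-1},
\]
uniformly in $s>3$. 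Multiply this by $(\frac25)^{s/3}$ and by the tail bound of Lemma~\ref{Singular Series Extension}, applying $N\geqslant(2m/5)^{1/3}$ to the negative power $N^{-(\frac{5s}{21}-2)\delta}$. This produces exactly the second term of \eqref{Icosahedral 1st Theorem Eq}. With this estimate inserted, the rest of your argument goes through as you describe.
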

\begin{proof} For $s>3, \lvert\theta\rvert\leqslant\frac{1}{2}$, we have $\lvert (\frac{5}{2})^{1/3}v(\theta)\rvert\leqslant\min\{\left(\frac{5}{2}\right)^{1/3}N,2\lvert\theta\rvert^{-1/3}\}$ following a similar computation as in Lemma \ref{V_1 theta Estimation}. Therefore, we obtain
\begin{align}
|J^*(m)|&= \left|\int_{-N^{\delta-3}}^{N^{\delta-3}}\left(\frac{5}{2}\right)^{s/3}v^s(\theta)e(-\theta m) \dd\theta\right| \leqslant \left(\frac{2^{s+1} \cdot 3}{s-3}+2\left(\frac{5}{4}\right)^{s/3}\right)m^{\frac{s}{3}-1}\label{|J^*(m)|}.
\end{align}
Now, we show that in \eqref{Approximating Major Arc Integral}, one may replace $J^{*}(m)$ by $J_1(m)$ while allowing a small error. Indeed, combining Lemma \ref{Singular Series Extension}, \eqref{J_1(m)-J^*(m)} and \eqref{|J^*(m)|}, we deduce that
\begin{align}
\bigg|\mathcal{I}_{s}^*(m)&- \bigg(\frac{2}{5}\bigg)^{\frac{s}{3}}\mathfrak{S}(m)J_1(m)\bigg| \notag \\
    &\leqslant \bigg(\frac{2}{5}\bigg)^{\frac{s}{3}}\bigg(|J^*(m)|\cdot |\mathfrak{S}(m)-\mathfrak{S}(m, N^\delta)|+|\mathfrak{S}(m)||J_1(m)-J^*(m)|\bigg)\notag\\
    &\leqslant\bigg(\frac{2}{5}\bigg)^{\frac{s}{3}} \bigg(\frac{(6\cdot 2^s+2\left(\frac{5}{4}\right)^{s/3}(s-3))\cdot 24^sm^{s/3-1}}{(s-3)(\frac{5s}{21}-2)N^{(\frac{5s}{21}-2)\delta}}+\bigg(72se^{e^{46}}\left(\frac{5}{2}\right)^{\frac{s}{3}}N^{s-4+2\delta}\notag\\  &\quad+6e^{e^{46}}\frac{2^{s}}{(s-3)}N^{\frac{3\delta-s\delta+3s-9}{3}}\bigg)\bigg)\notag\\
    &\leqslant \frac{(6\cdot 2^s+2\left(\frac{5}{4}\right)^{\frac{s}{3}}(s-3))\cdot 24^s\cdot(\frac{2}{5})^{\frac{21s-(5s-42)\delta}{63}}}{(s-3)(\frac{5s}{21}-2)}\cdot m^{\frac{(42-5s)\delta+21s-63}{63}} \notag\\
    &\quad+182s\left(\frac{1}{2}\right)^{\frac{s}{3}}e^{e^{46}} m^{\frac{s-4+2\delta}{3}}+\left(\frac{1}{5}\right)^{\frac{s}{3}}\left(\frac{1}{2}\right)^{\frac{3\delta-s\delta-9}{9}}\cdot \frac{6e^{e^{46}}2^s}{s-3}\cdot m^{\frac{3\delta-s\delta+3s-9}{9}}.\label{R*(m)-(2/5)^{s/3}S(m)J_1(m)}
\end{align}
Combining Lemma \ref{Approx 4} and \eqref{R*(m)-(2/5)^{s/3}S(m)J_1(m)}, we have
\begin{align}
\bigg|\int_{\mathfrak{M}}&f(\alpha)^s e(-\alpha m) \dd\alpha-\bigg(\frac{2}{5}\bigg)^{\frac{s}{3}}\mathfrak{S}(m)J_1(m)\bigg|\notag\\
&\leqslant
113s\left(\frac{m}{2}\right)^{\frac{5\delta+s-4}{3}}+ \frac{(6\cdot 2^s+2\left(\frac{5}{4}\right)^{s/3}(s-3))\cdot 24^s\cdot (\frac{2}{5})^{\frac{21s-(5s-42)\delta}{63}}}{(s-3)(\frac{5s}{21}-2)} \cdot m^{\frac{(42-5s)\delta+21s-63}{63}}\notag\\
&\quad +182s\left(\frac{1}{2}\right)^{\frac{s}{3}}e^{e^{46}} m^{\frac{s-4+2\delta}{3}}+\left(\frac{1}{5}\right)^{\frac{s}{3}}\left(\frac{1}{2}\right)^{\frac{3\delta-s\delta-9}{9}}\cdot \frac{6e^{e^{46}}2^s}{s-3} \cdot m^{\frac{3\delta-s\delta+3s-9}{9}}.
\label{f(alpha)-S(m)J_1(m) bound}
\end{align}
Finally, putting together Lemma \ref{J_1(m) approximation bound} and \eqref{f(alpha)-S(m)J_1(m) bound}, for $s\geqslant 9$ and $m>\frac{5}{2}(e^{e^{45}})^{3/\delta}$,
\begin{align}
\bigg|\int_{\mathfrak{M}}&f(\alpha)^s e(-\alpha m) \dd\alpha-\bigg(\frac{2}{5}\bigg)^{\frac{s}{3}}\Gamma\bigg(\frac{4}{3}\bigg)^{s}\Gamma\bigg(\frac{s}{3}\bigg)^{-1}\mathfrak{S}(m) m^{\frac{s}{3}-1}\bigg|\notag\\
 &\leqslant
113s\left(\frac{m}{2}\right)^{\frac{5\delta+s-4}{3}}+ \frac{(6\cdot 2^s+2\left(\frac{5}{4}\right)^{s/3}(s-3))\cdot 24^s\cdot (\frac{2}{5})^{\frac{21s-(5s-42)\delta}{63}}}{(s-3)(\frac{5s}{21}-2)}\cdot m^{\frac{(42-5s)\delta+21s-63}{63}}\notag\\
&\quad +182s\left(\frac{1}{2}\right)^{\frac{s}{3}}e^{e^{46}}m^{\frac{s-4+2\delta}{3}}+\left(\frac{1}{5}\right)^{\frac{s}{3}}\left(\frac{1}{2}\right)^{\frac{3\delta-s\delta-9}{9}}\frac{6e^{e^{46}}2^s}{s-3} \cdot m^{\frac{3\delta-s\delta+3s-9}{9}} \notag\\
&\quad + \left( \frac{2}{5} \right)^{\frac{s}{3}} \left( e^{e^{46}} \cdot 10^{s-2} m^{\frac{s-4}{3}}\right). \label{4.26 bounds}
\end{align}
The proof now follows immediately from Lemma \ref{Minor Arcs : thm} and \eqref{4.26 bounds}.
\end{proof}
In Theorem \ref{main Theorem for formula}, we can in fact choose a suitable $\delta$ in terms of $s$ to minimize the exponents of $m$ in the right-hand side of \eqref{Icosahedral 1st Theorem Eq}. Observing the first and second terms in the right-hand side, we must have $0<\delta<\frac{1}{5}$ so that the main term dominates the error term. In fact, any $\delta$ in this range satisfies the initial requirement in \eqref{Defining N}. Now, we optimize our choice of $\delta$.
\begin{thm}\label{main theorem asymp} 
One may choose $\delta = \frac{21}{63+5s}$ in the statement of Theorem \ref{main Theorem for formula}. 
\end{thm}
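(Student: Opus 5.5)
The choice $\delta = \frac{21}{63+5s}$ comes from balancing the exponents of $m$ in the six error terms on the right-hand side of \eqref{Icosahedral 1st Theorem Eq}. First I check admissibility. For $s\geqslant 9$ we have $63+5s\geqslant 108$, so $\delta\leqslant \frac{21}{108}<\frac15$, and clearly $\delta>0$. By the remark preceding the theorem, any $\delta$ in $(0,\frac15)$ satisfies $N^{3\delta-3}<\frac12$ from \eqref{Defining N}, and the hypothesis $m>\frac52(e^{e^{45}})^{3/\delta}$ then gives $N^\delta\geqslant e^{e^{45}}$, as required by Lemma \ref{Singular Series Extension} and Section \ref{sec: Major Arcs Sec III}. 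So Theorem \ref{main Theorem for formula} applies verbatim.

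Next I compare the exponents with the main term $m^{s/3-1}$. Writing each exponent as $\frac s3-1$ minus a saving, the savings are as follows.
\begin{itemize}
\item[(i)] The first term saves $\frac{1-5\delta}{3}$.
\item[(ii)] The second term saves $\frac{(5s-42)\delta}{63}$.
\item[(iii)] The third term saves $\frac{1-2\delta}{3}$.
\item[(iv)] The fourth term saves $\frac{(s-3)\delta}{9}$.
\item[(v)] The fifth term saves $\frac13$.
\item[(vi)] The minor arc term saves $\frac{(s-8)\delta}{12}$, up to the $o(1)$ corrections of size $\frac{1}{\log\log m}$ and $\frac{\log\log m}{\log m}$.
\end{itemize}
Savings (iii) and (v) always beat (i). Saving (iv) beats (ii), since $\frac{s-3}{9}=\frac{7s-21}{63}\geqslant\frac{5s-42}{63}$.

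The binding constraints are therefore (i), which decreases in $\delta$, against (ii) and (vi), which increase in $\delta$. Equating (i) and (ii) gives $21(1-5\delta)=(5s-42)\delta$, that is, $21=(5s+63)\delta$, which is exactly $\delta=\frac{21}{63+5s}$. At this choice the common saving is $\frac{5s-42}{3(63+5s)}$. For $s=9$ this equals $\frac{3}{324}=\frac1{108}$, matching the exponent $2-\frac1{108}$ in Theorem \ref{Theorem: Representation Icosahedral}.

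The main obstacle is the minor arc term (vi). Its saving $\frac{(s-8)\delta}{12}$ is smaller than (ii) for small $s$: at $s=9$, $\delta=\frac{21}{108}$ gives only $\frac{21}{1296}\cdot\ldots$, and I must check that it still dominates, or absorb it. I would compare $\frac{(s-8)\delta}{12}$ with $\frac{5s-42}{3(63+5s)}$ directly. At $s=9$ these are $\frac{1}{12}\cdot\frac{21}{108}=\frac{7}{432}$ and $\frac{1}{108}=\frac{4}{432}$, so (vi) has the larger saving by a margin of $\frac{3}{432}$. That margin must absorb the $o(1)$ terms $\frac{0.53305(s-8)+6.3966}{1.2\log\log m}+\frac{(s-8)\log\log m}{4\log m}$. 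Under $m>\frac52(e^{e^{45}})^{3/\delta}$ we have $\log\log m>e^{45}\cdot$ (roughly $45$), so these terms are negligible. For general $s$ I would verify the inequality $\frac{(s-8)\cdot 21}{12(63+5s)}>\frac{5s-42}{3(63+5s)}$, i.e.\ $21(s-8)>4(5s-42)$, i.e.\ $s>0$, which always holds. Hence with this $\delta$ every error term is $O(m^{s/3-1-\frac{5s-42}{3(63+5s)}+o(1)})$, which justifies the choice.
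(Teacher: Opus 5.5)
Your argument is essentially the paper's. The paper forms $G(\delta)$, the maximum of the $\delta$-dependent exponents measured in powers of $N$; each entry is $-3$ times one of your savings. It asserts that on $(0,\frac15)$ this maximum equals $(2-\frac{5s}{21})\delta$ up to the crossing point and $5\delta-1$ after it, and intersects the two lines, which is exactly your equation (i)$=$(ii). The paper implicitly disposes of the minor-arc exponent $-\delta(\frac s4-2)$ by the slope comparison $\frac{5}{21}<\frac14$, which is the same as your inequality $21(s-8)>4(5s-42)$. Your checks that (iii), (iv), (v) never bind, and that $0<\delta_0<\frac15$ so Theorem \ref{main Theorem for formula} applies, are correct.

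The one step that fails is your handling of the lower-order corrections in the minor-arc exponent, where you have confused $\log m$ with $\log\log m$.
\begin{itemize}
\item The hypothesis $m>\frac52(e^{e^{45}})^{3/\delta}$ gives only $\log m>\frac{3}{\delta}e^{45}$, hence $\log\log m>45+\log\frac{3}{\delta}\approx 47.7$ for $s=9$, and $m$ near the threshold has $\log\log m$ of exactly this size.
\item For such $m$ the correction $\frac{0.53305(s-8)+6.3966}{1.2\log\log m}$ is about $0.12$ when $s=9$.
\item That swamps your margin $\frac{s}{12(63+5s)}=\frac{1}{144}$. It even swamps the whole saving $\frac{7}{432}$, so near the threshold the bound of Theorem \ref{Minor Arcs : thm} does not beat the main term $m^{s/3-1}$.
\item The margin absorbs these corrections only once $\log\log m$ exceeds roughly $830$.
\end{itemize}
So drop the claim that the corrections are negligible under the hypothesis of Theorem \ref{main Theorem for formula}. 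What your comparison legitimately proves is your final statement, with $+o(1)$ in the exponent as $m\to\infty$. That is also all the paper's proof establishes, since it compares only the $\delta$-dependent parts of the exponents and ignores the $1/\log\log m$ terms altogether.
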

\begin{proof}
Extract the exponents depending on $\delta$ in the right-hand side of \eqref{Icosahedral 1st Theorem Eq}. Consider 
\[
G(\delta) = \max \bigg \{ 5\delta - 1, \bigg( 2 - \frac{5s}{21} \bigg )\delta, 2\delta-1, \delta\bigg (1 - \frac{s}{3}\bigg ), -\delta\bigg(\frac{s}{4} -2\bigg) \bigg \}.
\]
Let $\delta_{0}$ be such that $G(\delta_{0}) = \inf \{G(\delta): \delta \in (0,\frac{1}{5})\}$.
When $s\geqslant 9$, we have 
\[ 
G(\delta) = 
\begin{cases}
    \delta(1-\frac{s}{3}), \text{ if } \delta < 0 \\ \left( 2 - \frac{5s}{21} \right)\delta, \text{ if } \delta \in [0, \delta_{0}] \\ 5\delta - 1, \text{ if } \delta > \delta_{0}.
\end{cases}
\]
Thus $\delta_{0}$ occurs at the intersection of $\left( 2 - \frac{5s}{21} \right)\delta$ and $5\delta - 1$, which implies $\delta_0 =\frac{21}{63+5s}$.
\end{proof}
In particular, if we let $s=9$ and define 
\begin{align}\label{Arithmetic Factor for s=9}
\mathfrak{S}_{9,\mathcal{I}}(m) : = \sum_{q=1}^{\infty} \sum\limits_{\substack{a=1 \\ (a,q)=1}}^q\bigg(\frac{V(q,a)}{2q}\bigg)^9 e\bigg(-\frac{am}{q}\bigg),
\end{align}
where $V(q,a)$ and $I_n$ are given by \eqref{Definition V(q,a)} and\eqref{Icosahedral Number : defn} respectively, we obtain Theorem \ref{Theorem: Representation Icosahedral} as stated in Section \ref{sec: Introduction}. Even though Theorems \ref{main Theorem for formula} and \ref{main theorem asymp} hold for a much lower threshold for $m$, we need $m>e^{e^{94}}$ to ensure that in Theorem \ref{Theorem: Representation Icosahedral}, the error term is smaller than the main term.
\section{An Algebraic Approach towards Pollock's Conjectures} \label{sec: Proof of Theorem 1.6}
\subsection{Sums of Eight Icosahedral Numbers} Here we work with eight icosahedral numbers, hoping to get a much lower bound for $m$ compared to what we obtained in Theorem \ref{Theorem: Representation Icosahedral}. We will begin with Linnik's method and apply new algebraic ingredients that help us in completely proving Pollock's conjectures. To begin with, we first consider the sum of six icosahedral numbers. Let $g(x) = \frac{5x^3}{2}-\frac{5x^2}{2}+x,$ and write
\begin{align}\label{Six Icosahedral Numbers}
\mathcal{S}(6,\mathcal{I}) &= g(x + a + 1) +g(x - a) + g(x+b+1) + g(x- b) + g(x + c + 1) + g(x - c)\notag
\\
&= \frac{3}{2}(10x^{3} + 5x^{2} + 9x + 2) + (30x + 5)\left( \frac{c(c+1)}{2} + \frac{b(b+1)}{2} + \frac{a(a+1)}{2} \right).
\end{align}
Here $a,b,c <x$. Since $\frac{n(n+1)}{2}$ is the general form of the $n$-th triangular number, we see that 
\[
 \frac{c(c+1)}{2} + \frac{b(b+1)}{2} + \frac{a(a+1)}{2} 
\]
is the sum of three triangular numbers. Due to Gauss, we know that every positive integer is the sum of at most three triangular numbers. We use this to prove for any large and fixed $m$, 
\begin{align}\label{Eight Icosahedral Numbers}
\mathcal{S}(6,\mathcal{I}) + g(\ell) + g(r)= m,
\end{align}
where we will later choose $\ell$ and $r$ to be suitable positive integers. \par
From \eqref{Six Icosahedral Numbers} and \eqref{Eight Icosahedral Numbers}, it follows that 
\[
\left( \frac{c(c+1)}{2} + \frac{b(b+1)}{2} + \frac{a(a+1)}{2} \right) = \frac{m - g(\ell) - g(r) - \frac{3}{2}(10x^{3} + 5x^{2} + 9x + 2)}{30x + 5}.
\]
This implies that the expression
\[
m - g(\ell) - g(r) - \frac{3}{2}(10x^{3} + 5x^{2} + 9x + 2)
\]
must be divisible by $30x + 5$. Therefore, the problem is reduced as follows. For any positive integer $m$ (maybe sufficiently large), it suffices to find $x,\ell,r$ in $\mathbb{Z}^+$ satisfying:
\begin{align*}
   &1. \quad 0< L - g(\ell) - g(r) < 45x^3\\
   &2. \quad (30x+5)\mid (L - g(\ell) - g(r))
\end{align*}
where $L = m - \frac{3}{2} \left( 10x^{3} + 5x^{2} + 9x + 2 \right) \in \Z^+$. We will refer to the above conditions as Condition 1 and Condition 2, respectively. We treat them in the following subsection.
\subsection{The Arithmetic Conditions} Fix $t,\varepsilon \in \R^{+}$. Our initial strategy is to find a prime $p$ such that $p \equiv 1 \bmod 6$ and 
\begin{align}
    (t - \varepsilon)p^{3} \leqslant m \leqslant (t + \varepsilon)p^{3}.\label{m bound}
\end{align}
This means that $p$ needs to satisfy the inequalities
\begin{align}\label{Primes in AP Condition}
\bigg ( \frac{m}{t + \varepsilon} \bigg)^{1/3} \leqslant p \leqslant \bigg ( \frac{m}{t - \varepsilon} \bigg)^{1/3}.
\end{align}
For large enough $m$, by the Prime Number Theorem, there exists a prime in such an interval. Note that all primes except 2 and 3 are of the form $6n+1$ and $6n-1$. By the Prime Number Theorem for arithmetic progressions, these primes are almost evenly distributed when the range is large. Therefore, when $m$ is sufficiently large, the number of primes of the form $6n+1$ satisfying \eqref{Primes in AP Condition} is about 
\begin{align*}
    \frac{3\left(\frac{m}{t - \varepsilon} \right)^{1/3}}{2\log(\frac{m}{t - \varepsilon})}-\frac{3\left(\frac{m}{t + \varepsilon} \right)^{1/3}}{2\log(\frac{m}{t + \varepsilon})}.
\end{align*}
If such a $p$ is guaranteed, then we let $x = \frac{p-1}{6}$. Therefore, we have that $30x + 5 = 5p.$ Now we need to find suitable values of $\ell,r$ satisfying Conditions 1 and 2 with this chosen $x$. Substituting $x = \frac{p-1}{6}$ into \eqref{m bound}, we get
\begin{align*}
    &(t-\varepsilon)p^3 \leqslant L +\frac{5(p-1)^3}{72}+\frac{15(p-1)^2}{72}+\frac{9(p-1)}{4}+3\leqslant (t+\varepsilon)p^3.
\end{align*}
When $p$ is sufficiently large, we may reduce this to
\begin{align}\label{L Inequality}
\bigg (t-2\varepsilon-\frac{5}{72}\bigg )p^3\leqslant L\leqslant \bigg (t+\varepsilon-\frac{5}{72}\bigg)p^3.
\end{align}
We first consider Condition 1. This will be satisfied if
\begin{align}
L - \frac{5}{24}p^{3} < g(\ell) + g(r) < L.\label{alternative form of condition 1}  
\end{align}
Recalling the range of $L$, we must choose $r$ and $\ell$ such that
\begin{align}
   \left( t - 2\varepsilon - \frac{5}{18} \right) p^{3} < g(\ell) + g(r) < \left( t + \varepsilon - \frac{5}{72}\right) p^{3}. \label{f(l)+f(r) bound 2}
\end{align}
If $r, \ell \in \left( (\alpha - \varepsilon) p, (\alpha + \varepsilon) p\right)$ for some $\alpha>0$, then
\begin{align}
  g(\ell) + g(r) < 5(\alpha + \varepsilon)^{3}p^{3}.  \label{f(l)+f(r) upper bound}
\end{align}
Combining this with \eqref{f(l)+f(r) bound 2}, we may assume 
\begin{align*}
(\alpha + \varepsilon)^{3} < \frac{t}{5} - \frac{1}{72} + \frac{\varepsilon}{5},
\end{align*}
which implies that
\begin{align}\label{Alpha Range Lower}
\alpha < \bigg(\frac{t}{5} - \frac{1}{72} + \frac{\varepsilon}{5}\bigg)^{1/3}-\varepsilon.
\end{align}
Similarly, we need 
\begin{align}\label{flfr lower bound}
    5(\alpha - \varepsilon)^{3}p^{3}<g(\ell) + g(r),
\end{align}
so we may assume that
\begin{align}\label{Alpha Range Upper}
\alpha > \bigg(\frac{t}{5} - \frac{2\varepsilon}{5} - \frac{1}{18}\bigg)^{1/3}+\varepsilon.
\end{align}
Now we study Condition 2. To this end, we wish to find $\ell, r$ that satisfy the congruence $g(\ell) + g(r) \equiv L \bmod 5p.$ Using the Chinese Remainder Theorem, it suffices to consider the congruence equations $\ell+r \equiv L \bmod 5$ and $g(\ell) + g(r) \equiv L \bmod p$. Therefore, in order to satisfy both conditions, it suffices to find $\ell,r\in\mathbb{Z^+}$ satisfying
\begin{align}
r,\ell &\in \left( (\alpha - \varepsilon) p,(\alpha + \varepsilon) p\right),\notag\\
\ell &\equiv 0\bmod 5,\notag \\
r&\equiv L\bmod 5,\notag \\
\textrm{and} \quad  g(\ell)+g(r)&\equiv L\bmod p,\label{final conditions}
\end{align}
where $\alpha$ satisfies \eqref{Alpha Range Lower} and \eqref{Alpha Range Upper}. To address this problem, we will prove an explicit version of a result involving Lehmer points due to Cobeli and the fourth author \cite[Theorem 1]{cobeli}. We first introduce some notation following \cite{cobeli}. Let $1 \leqslant L_{0} \leqslant 5$ be such that $L \equiv L_{0} \bmod 5$. Let $\mathcal{C}$ denote the algebraic curve over $\mathbb{F}_{p}$ given by 
\begin{align}\label{Curve definition}
\mathcal{C}: g(x_1)+g(x_2)-L=0.
\end{align}
Define the vectors
\begin{align}
\textbf{x}& := \begin{bmatrix}
           x_1 \\
           x_2 \\
         \end{bmatrix},
    \textbf{a} := \begin{bmatrix}
           5 \\
           5 \\
         \end{bmatrix},
    \textbf{b} := \begin{bmatrix}
           0 \\
           L_0 \\
         \end{bmatrix},
    \textbf{t}:= \begin{bmatrix}
         t_1 \\
          t_2 \\
         \end{bmatrix},
         \textbf{g}:= \begin{bmatrix}
         g_1 \\
          g_2 \\
         \end{bmatrix}. \label{vectors}
\end{align}
The set of Lehmer points $\mathcal{L}(p,\mathcal{C},\textbf{a},\textbf{b})$ with respect to the prime $p$, the curve $\mathcal{C}$, and the vectors $\textbf{a},\textbf{b}$ is defined as
\begin{align}\label{Lehmer Point Definition}
\mathcal{L}(p,\mathcal{C},\textbf{a},\textbf{b}) := \{\textbf{x} : x_1 \equiv 0\bmod 5, x_2\equiv L_0 \bmod 5, (x_1,x_2)\in\mathcal{C}, 0\leqslant x_1, x_2<p\} . 
\end{align}
For the prime $p$ and the vectors $\textbf{t}$ and $\textbf{g}$, consider the intervals $U_j = ((t_j-g_j)p, (t_j+g_j)p)$ for $j \in \{1,2\}$. Then the distribution function of the Lehmer points is given by
\begin{align}\label{Lehmer Distribution function}
F(p,\mathcal{C},\textbf{a},\textbf{b};\textbf{g},\textbf{t}):=\#\{\textbf{x}\in \mathcal{L}(p,\mathcal{C},\textbf{a},\textbf{b}):  x_j\in U_j, 0\leqslant t_j\pm g_j\leqslant 1, j=1,2 \}.
\end{align}
With the above setup, we have the following lemma.
\begin{lem} \label{ZC theorem}
Let $p$ be a prime. Suppose $\mathcal{C}$, $\textnormal{\textbf{a}, \textbf{b}, \textbf{g}, \textbf{t}}$ and $ F$ be as in \eqref{Lehmer Distribution function}. Then for $p\geqslant 10^{10}$,
\begin{align*}
F(p,\mathcal{C},\textnormal{\textbf{a}, \textbf{b}; \textbf{g}, \textbf{t}}) = \frac{4g_1g_2}{25}(p+1)+ \mathcal{E},
\end{align*}
where $|\mathcal{E}| \leqslant 6.0003\sqrt{p}\log^2 p$.
\end{lem}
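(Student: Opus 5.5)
We will follow the Cobeli--Zaharescu strategy: detect the box and congruence conditions with additive characters and reduce to complete exponential sums along $\mathcal{C}$, then apply Bombieri's bound. The congruence conditions are mod $5$ and $p\geqslant 10^{10}$, so $(5,p)=1$. The conditions $x_1\equiv 0$, $x_2\equiv L_0 \bmod 5$ on integers $0\leqslant x_j<p$ can therefore be encoded by
\[
\frac{1}{25}\sum_{u_1,u_2 \bmod 5} e\Big(\frac{u_1x_1+u_2(x_2-L_0)}{5}\Big).
\]
It is simpler, though, to change variables: write $x_1=5y_1$, $x_2=5y_2+L_0$ with $y_j$ in intervals of length about $2g_jp/5$. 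Then $F$ counts points $(y_1,y_2)\in\mathbb{F}_p^2$ on the transformed curve $\mathcal{C}':g(5y_1)+g(5y_2+L_0)=L$ lying in a box $B_1\times B_2$ of side lengths $\approx 2g_jp/5$. The transformed curve is again a plane cubic, isomorphic to $\mathcal{C}$ over $\mathbb{F}_p$ since $5$ is invertible.

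\textbf{Step 1: Fourier expansion.} We expand the indicator of each interval $B_j\subset[0,p)$ into additive characters mod $p$:
\[
\mathbbm{1}_{B_j}(y)=\frac{1}{p}\sum_{h\bmod p}\widehat{\mathbbm{1}_{B_j}}(h)\,e\Big(\frac{hy}{p}\Big),
\]
where $\widehat{\mathbbm{1}_{B_j}}(0)=|B_j|$ and $|\widehat{\mathbbm{1}_{B_j}}(h)|\leqslant \frac{1}{2}\|h/p\|^{-1}$ by Lemma \ref{Geometric Sum}. This gives
\[
F=\frac{1}{p^2}\sum_{h_1,h_2}\widehat{\mathbbm{1}}_{B_1}(h_1)\widehat{\mathbbm{1}}_{B_2}(h_2)\,S(h_1,h_2),\qquad S(h_1,h_2)=\sum_{(y_1,y_2)\in\mathcal{C}'(\mathbb{F}_p)}e\Big(\frac{h_1y_1+h_2y_2}{p}\Big).
\]

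\textbf{Step 2: the main term.} The term $h_1=h_2=0$ contributes $|B_1||B_2|\,\mathcal{N}/p^2$, where $\mathcal{N}$ is the number of affine points on $\mathcal{C}'$. We have $|B_j|=2g_jp/5+O(1)$. For $\mathcal{N}$, we use Lemma \ref{elliptic curve satisfied} (with $2B=-L$ in the notation there), together with its analysis of the special cases. Unless the curve degenerates (the case $B\equiv-8/27$ splits it into a line and a conic), this gives $\mathcal{N}=p+1+O(\sqrt p)$. The main term is then $\frac{4g_1g_2}{25}(p+1)+O(\sqrt p)$, with the rounding errors from $|B_j|$ costing $O(1)$ relative to $p$.

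\textbf{Step 3: the nonzero frequencies (main obstacle).} For $(h_1,h_2)\neq(0,0)$ we need Bombieri's bound $|S(h_1,h_2)|\leqslant (2\mathfrak{g}-2+2\deg)\sqrt p$ for sums of a nonconstant linear function over an absolutely irreducible curve of degree $3$. This should yield a constant near $6$, hence $|S|\leqslant 6\sqrt p$ plus points at infinity. To apply it we must check that $\mathcal{C}'$ is absolutely irreducible and that $h_1y_1+h_2y_2$ is nonconstant on it. The latter holds for a cubic not containing a line, by Bézout. In the reducible case $B\equiv -8/27$, each component (line or conic) is handled separately: linear sums over a line either vanish or are constant. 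The constant case occurs only for the one direction $h_1=h_2$, which is a single line of frequencies, and it must be absorbed into the main term or bounded by $\sum_h|\widehat{\mathbbm{1}}(h)|\ll p\log p$ after normalization. Weil's bound handles the conic. Keeping the constant at $6.0003$ through this case analysis is the delicate part.

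\textbf{Step 4: summing.} By Lemma \ref{Geometric Sum}, $\sum_{h\bmod p}|\widehat{\mathbbm{1}}_{B_j}(h)|\leqslant p\,(\log p+1)$ up to small constants. The error is therefore at most
\[
\frac{1}{p^2}\cdot 6\sqrt p\cdot\big(p\log p\,(1+O(1/\log p))\big)^2 ,
\]
which is $\leqslant 6.0003\sqrt p\log^2p$ for $p\geqslant 10^{10}$. Here the $O(\sqrt p\log p)$ terms with one of $h_1,h_2$ equal to zero, and the $O(\sqrt p)$ from Step 2, are absorbed into the slack $0.0003\sqrt p\log^2 p$, which is possible since $\log p\geqslant 23$.
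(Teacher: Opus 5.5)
Your route is the paper's own. You Fourier-expand the box-and-congruence indicators in additive characters mod $p$, use Lemma \ref{elliptic curve satisfied} at the zero frequency, and use Bombieri's bound (degree $3$, linear function, $6\sqrt p+9$) at the others. Your substitution $x_1=5y_1$, $x_2=5y_2+L_0$ is the same as the paper's change of variables $u_j\equiv 5z_j$ in the Fourier coefficients.

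\textbf{The split case is a genuine gap.} Here $\mathcal{C}$ is the line $x_1+x_2\equiv 2/3$ together with a conic. This happens for $L\equiv 8/27$; note that $B=-L$ in the notation of Lemma \ref{elliptic curve satisfied}, not $2B=-L$. In your coordinates the line is $y_1+y_2\equiv c$, and only the diagonal frequencies $h_1=h_2$ (including $0$) survive on it. Their total contribution is exactly
\[
\frac{1}{p}\sum_{h \bmod p}\widehat{\mathbbm{1}}_{B_1}(h)\,\widehat{\mathbbm{1}}_{B_2}(h)\,e\Big(\frac{hc}{p}\Big)=\#\{(y_1,y_2)\in B_1\times B_2:\ y_1+y_2\equiv c \pmod p\},
\]
which is the number of points of the line in the box. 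This count depends on where the line crosses the box. It can be $0$, and it can be as large as about $\frac{2}{5}\min(g_1,g_2)p$, which is at least five times $\frac{4g_1g_2}{25}p$. So it can be absorbed neither into an $O(\sqrt p\log^2 p)$ error nor ``into the main term''. Your fallback bound $\ll p\log p$ is itself far larger than the allowed error.

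For the full cubic, the lemma is therefore false in this case. The paper resolves this by convention, in the Remark after the lemma: in the split case $\mathcal{C}$ is replaced by its conic component. That conic has $p+O(1)$ affine points, and Bombieri gives $2\sqrt p+4$ for it. Nothing is lost downstream, since only $F>0$ is used and the line can only add points.

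\textbf{Step 4 does not deliver the constant $6.0003$.} With your bound $\log p+1$ for $\frac1p\sum_h|\widehat{\mathbbm{1}}_{B_j}(h)|$, the product gives about $6(1+1/\log p)^2\approx 6.5$ at $p=10^{10}$. The axis terms ($h_1=0\neq h_2$ or vice versa) have total size up to about $6\cdot\frac{2(g_1+g_2)}{5}\sqrt p\log p\leqslant 2.4\sqrt p\log p$. The slack $0.0003\sqrt p\log^2 p$ is only about $0.16\sqrt p$ when $\log p\approx 23$, so the claimed absorption fails by a factor of several hundred.

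The paper does not split off the axes. It bounds the entire $\mathbf z\neq\mathbf 0$ contribution by $6.0001\sqrt p\prod_j\sum_{z_j\in\mathbb F_p}\min\{m_j/p,\,1/(2|u_j|)\}$ and takes each factor to be at most $\log p$. That needs the sharp harmonic-sum estimate $\sum_{1\leqslant u\leqslant (p-1)/2}u^{-1}\leqslant\log p-0.11$, which your bound $\frac12\|h/p\|^{-1}$ from Lemma \ref{Geometric Sum} already gives. The zero-frequency mass $m_j/p\approx 2g_j/5$ is then offset by the $-0.11$. This works for small $g_j$, such as the value $0.16$ used later, but not through a $1+O(1/\log p)$ argument.
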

\begin{rem}
If $\mathcal{C}$ is reducible over $\mathbb{F}_p$ as a union of a line and a conic, then take $\mathcal{C}$ in the statement of Lemma \ref{ZC theorem} to be the conic part. Note that by the discussion in Lemma \ref{elliptic curve}, $\mathcal{C}$ can never be factored as a product of linear factors. 
\end{rem}
\begin{proof}
If $\mathcal{Q}$ is a subset of $\mathbb{F}_p^2$, we denote by $\chi_\mathcal{Q}$ the characteristic function of $\mathcal{Q}$, that is 
\begin{align*}
 \chi_\mathcal{Q}(\textbf{x}) =
    \begin{cases}
    1, & \text{if } \textbf{x} \in \mathcal{Q}\\
    0, & \text{otherwise.}
    \end{cases}
\end{align*}
For $j=1,2$, let $\mathcal{Q}(a_j,b_j,g_j,t_j)$ be the image in $\mathbb{F}_p$ of the set $ \{(t_j-g_j)p\leqslant x_j\leqslant (t_j+g_j)p: x_j\equiv b_j (\bmod \hspace{0.1cm} a_j),j=1,2\}.$ Then we have
\begin{align}
F(p,\mathcal{C},\textbf{a},\textbf{b};\textbf{g},\textbf{t}) &= \sum_{\textbf{x} \in\mathcal{C}}\chi_{\mathcal{Q}(a_1,b_1,g_1,t_1)}(x_1)\chi_{\mathcal{Q}(a_2,b_2,g_2,t_2)}(x_2)\label{F(p,r,C,a,b,t) Eq 1}.
\end{align}
Given $z \in \mathbb{F}_{p}$, let
\begin{align*}
\hat{\chi}_{\mathcal{Q}(a_j,b_j,g_j,t_j)}(z) &= \frac{1}{p}\sum_{w\in \mathbb{F}_p}\chi_{\mathcal{Q}(a_j,b_j,g_j,t_j)}(w)e_p(-wz),
\end{align*}
where $e_p$ is defined by $e_p(t) = e^{\frac{2 \pi i t}{p}}$. Then we can rewrite \eqref{F(p,r,C,a,b,t) Eq 1} as
\begin{align}
F(p,\mathcal{C},\textbf{a},\textbf{b};\textbf{g},\textbf{t}) =\sum_{\textbf{z}\in \mathbb{F}^2_p}\hat{\chi}_{\mathcal{Q}(a_1,b_1,g_1,t_1)}(z_1)\hat{\chi}_{\mathcal{Q}(a_2,b_2,g_2,t_2)}(z_2)S(\mathbf{z},p,\mathcal{C})\label{F(p,r,C,a,b,t)},
\end{align}
where $S(\mathbf{z},p,\mathcal{C}) = \sum_{\textbf{x} \in\mathcal{C}}e_p(z_1x_1+z_2x_2).$
Based on \eqref{vectors}, we choose $a_1=a_2=5$, $b_1=0$ and $b_2=L_0$. Let $m_j = [2g_j+2g_j [
       \frac{p-b_j}{5}]]$ for $j=1,2$. Then $\hat{\chi}_{\mathcal{Q}(a_j,b_j,g_j,t_j)}(0)=\frac{m_j}{p}.$
Therefore, it follows that \begin{align*}
 \bigg|\hat{\chi}_{\mathcal{Q}(a_j,b_j,g_j,t_j)}(0)-\frac{2g_j}{5}\bigg|\leqslant \frac{(4+2b_j)g_j}{p}.
       \end{align*}
When $z\neq 0$, we deduce that
\begin{align*}
\hat{\chi}_{\mathcal{Q}(a_j,b_j,q_j,t_j)}(z)&=\frac{e_p(-b_jz)-e_p(-(a_jm_j+b_j)z)}{p(1-e_p(-a_jz))} \leqslant \frac{2}{2p\left|\sin{\frac{\pi a_jz}{p}}\right|}.
\end{align*}
For $j=1,2$, we make a change of variables by choosing $u_j$ such that $0< \lvert u_j \rvert \leqslant \lvert \frac{p-1}{2} \rvert $ and $u_j \equiv a_j z \equiv5z\bmod p$. Then $p|\sin{\frac{\pi a_jz}{p}}|\geqslant 2|u_j|.$ In conclusion, for any $z\in \mathbb{F}_p$, we have
    \begin{align}
         \hat{\chi}_{\mathcal{Q}(a_j,b_j,g_j,t_j)}(z)\leqslant \min \bigg\{\frac{m_j}{p},\frac{1}{2|u_j|}\bigg\}\label{chi_{M(a_j,b_j,t_j)}(z)}.
    \end{align}
For $z_1=z_2=0$, we have $S(\textbf{0},p,\mathcal{C})=|\mathcal{C}|$, where $|\mathcal{C}|$ denotes the number of $\mathbb{F}_p$-rational points on the curve $\mathcal{C}$. By the previous remark and Lemma \ref{elliptic curve satisfied}, we have $|\mathcal{C}|=p+1+E$, where $|E|\leqslant 2\sqrt{p}$. In fact, if $\mathcal{C}$ is a conic, then $E=0$. So the contribution from $\mathbf{z}=\mathbf{0}$ in \eqref{F(p,r,C,a,b,t)} is
\begin{align*}
|\mathcal{C}|\hat{\chi}_{\mathcal{Q}(a_1,b_1,g_1,t_1)}(0)\hat{\chi}_{\mathcal{Q}(a_2,b_2,g_2,t_2)}(0)=(p+1+E)\left(\frac{2g_1}{5}+E_1\right)\left(\frac{2g_2}{5}+E_2\right),
\end{align*}
where $|E_1|\leqslant \frac{4g_1}{p}$ and $|E_2|\leqslant \frac{(4+2L_0)g_2}{p}$.
Therefore, since $0\leqslant g_j\leqslant 1/2$
and $L_0\leqslant 5$, we have
\begin{align}
\bigg||\mathcal{C}| &\hat{\chi}_{\mathcal{Q}(a_1,b_1,g_1,t_1)}(0)\hat{\chi}_{\mathcal{Q}(a_2,b_2,g_2,t_2)}(0)-\frac{4(p+1)g_1g_2}{25}\bigg|\notag\\
  &\leqslant \frac{(4(2+L_0)+8)g_1g_2}{5} \cdot \frac{p+1}{p}+8(2+L_0)g_1g_2\frac{p+1}{p^2}+\frac{8g_1g_2}{25}\sqrt{p}\notag\\
  &\quad+\frac{8(2+L_0)g_1g_2}{5\sqrt{p}}+\frac{16g_1g_2}{5\sqrt{p}}+\frac{18(2+L_0)g_1g_2}{p^{3/2}}\notag\\
  &\leqslant \frac{9}{5}\frac{p+1}{p}+14\frac{p+1}{p^2}+\frac{2}{25}\sqrt{p}+\frac{13}{5\sqrt{p}}+\frac{32}{p^{3/2}}\leqslant \frac{2}{25}\sqrt{p}+2,\label{Z-C theorem bound 1}
\end{align}
when $p\geqslant 10^{10}$. If $\mathbf{z}\neq \mathbf{0}$, then by our assumptions on the curve $\mathcal{C}$, we know that the linear form $z_1x_1+z_2x_2$ is not constant along $\mathcal{C}$. We apply the Bombieri--Weil inequality (see \cite{bombieri}, Theorem 6) by choosing $d_1=3$ and $d_2=1$. (Again, when $\mathcal{C}$ is a conic, $d_1 = 2$ and $d_2=1$.) It follows that $|S(\mathbf{z},p,\mathcal{C})|\leqslant 6\sqrt{p}+9\leqslant 6.0001\sqrt{p}.$ Therefore, the contribution from $\mathbf{z}\neq \mathbf{0}$ in \eqref{F(p,r,C,a,b,t)} is
\begin{align*}
    \leqslant 6.0001\sqrt{p}\sum_{\textbf{z}\in \mathbb{F}^2_p}\left|\hat{\chi}_{\mathcal{Q}(a_1,b_1,g_1,t_1)}(z_1)\right|\cdot\left|\hat{\chi}_{\mathcal{Q}(a_2,b_2,g_2,t_2)}(z_2)\right|.
\end{align*}
Using $\eqref{chi_{M(a_j,b_j,t_j)}(z)}$, this is bounded by 
\begin{align}
    &6.0001\sqrt{p}\sum_{z_1\in \mathbb{F}_p}\min \bigg\{\frac{2g_1+2g_1\left[
       \frac{p}{5}\right]}{p},\frac{1}{2|u_1|}\bigg\}\sum_{z_2\in \mathbb{F}_p}\min \bigg\{\frac{2g_2+2g_2\left[
       \frac{p-L}{5}\right]}{p},\frac{1}{2|u_2|}\bigg\}\notag\\
       &\leqslant6.0001\sqrt{p}\sum_{z_1\in \mathbb{F}_p}\min \bigg\{\frac{2}{p}+\frac{1}{5},\frac{1}{2|u_1|}\bigg\}\sum_{z_2\in \mathbb{F}_p}\min \bigg\{\frac{2}{p}+\frac{1}{5},\frac{1}{2|u_2|}\bigg\}.\label{Z-C theorem bound 2}
\end{align}
As $z_1$ varies in $\mathbb{F}_p$, $u_1=u_1(a_1,z_1)$ runs over the set $\{-\frac{p-1}{2},\ldots,\frac{p-1}{2}\}$ and the same holds true for $z_2$ and $u_2(a_2,z_2)$. Therefore \eqref{Z-C theorem bound 2} is bounded by
$6.0001\sqrt{p}\log^2 p$. Combining this with the bound in \eqref{Z-C theorem bound 1}, we have that the total error is bounded by $6.0003\sqrt{p}\log^2 p$.
\end{proof}
\subsection{Proof of Theorems \ref{Theorem: 8 Icosahedral Numbers} and \ref{Theorem: Pollock Icosahedral}} We now present the proof of Theorem \ref{Theorem: 8 Icosahedral Numbers}.
\begin{proof}
We will actually show that any  $m\geqslant \num{9.6446e35}$ is a sum of 8 icosahedral numbers. We choose $t=0.39,\varepsilon=0.16$ and $\alpha=0.16$ satisfying \eqref{Alpha Range Lower} and \eqref{Alpha Range Upper}. Since $(\alpha\pm\varepsilon)\in [0,1]$, we let $t_1=t_2=\alpha$ and $g_1=g_2=\varepsilon$ in Lemma \ref{ZC theorem}. Then the number of Lehmer points $(\ell,r)$ that satisfies the requirements in \eqref{final conditions} is
\[F(p,\mathcal{C},\textbf{a},\textbf{b};\textbf{g}, \textbf{t})=\frac{4\varepsilon^2(p+1)}{25}+\mathcal{E},
\]
where $|\mathcal{E}| <6.0003\sqrt{p}\log^2p$ by Lemma \ref{ZC theorem}. Hence to have $F(p,\mathcal{C},\textbf{a},\textbf{b};\textbf{g}, \textbf{t})>0$, we need 
\begin{align}\label{Prime Requirement}
\varepsilon^2(p+1)>\frac{25}{4} (6.0003\sqrt{p}\log^2p).
\end{align}
So when $m\geqslant \num{9.6446e35}$, by \eqref{m bound} $p\geqslant \num{1.299e12}$. Therefore, $0.16> 6.12388p^{-1/4} \log p$
which implies \eqref{Prime Requirement}.
So when $m\geqslant \num{9.6446e35}$, if we get a prime $p\equiv 1\bmod 6$ such that 
\[\frac{1}{\sqrt[3]{0.55}}\sqrt[3]{m}\leqslant p \leqslant \frac{1}{\sqrt[3]{0.23}}\sqrt[3]{m},
\]
then from the discussion above, the existence of $(\ell,r)$ satisfying \eqref{final conditions} is guaranteed, so Condition 2 will be met. Now, we restrict the range of $p$ to be the sub-interval [$1.54\sqrt[3]{m}, 1.62\sqrt[3]{m}$]. This range of $p$, when $p\geqslant 89$, together with \eqref{f(l)+f(r) upper bound} and \eqref{flfr lower bound} guarantees us \eqref{alternative form of condition 1}, which is exactly Condition 1. Thus the problem is reduced to finding a prime $p\equiv 1\bmod 6$ in the range [$1.54\sqrt[3]{m}, 1.62\sqrt[3]{m}$] when $m\geqslant \num{9.6446e35}$.
\par
Recall the Chebyshev function for arithmetic progressions, 
\begin{align*}
\theta(x;k,a)=\sum\limits_{\substack{p\leqslant x \\ p \equiv a \bmod k}}\log p, 
\end{align*}
where $p$ denotes a prime number. Note that $\theta(x;k,a)>0$ if and only if there exists a $p\leqslant x$ such that $p\equiv a\bmod k$. In our case, we take $k=6$ and $a=1$. By Ramaré and Rumely (see \cite[Theorem 1]{ramare1996}), for such $k$ and $a$, and for $m\geqslant \num{9.6446e35}$, we have
\begin{align*}
    \max_{1\leqslant y\leqslant 1.54\sqrt[3]{m}}\bigg|\theta(y;6,1)-\frac{y}{2}\bigg|\leqslant 0.0035112\sqrt[3]{m},
\end{align*}
where we take $\varepsilon$ in \cite{ramare1996} to be $0.00456$. Taking $y=1.54\sqrt[3]{m}$, we have
\begin{align}
    \bigg|\theta(1.54\sqrt[3]{m};6,1)-0.77\sqrt[3]{m}\bigg|\leqslant 0.0035112\sqrt[3]{m}.\label{upper left bound}
\end{align}
Similarly, 
\begin{align*}
    \max_{1\leqslant y\leqslant 1.62\sqrt[3]{m}}\bigg|\theta(y;6,1)-\frac{y}{2}\bigg|\leqslant 0.0036936\sqrt[3]{m}.
\end{align*}
Setting $y=1.62\sqrt[3]{m}$, we have
\begin{align}
   \bigg|\theta(1.62\sqrt[3]{m};6,1)-0.81\sqrt[3]{m}\bigg|\leqslant 0.0036936\sqrt[3]{m}.\label{lower right bound}
\end{align}
Putting together \eqref{upper left bound} and \eqref{lower right bound}, we see that 
\begin{align}
    \sum\limits_{\substack{1.54\sqrt[3]{m}<p\leqslant 1.62\sqrt[3]{m} \\ p \equiv 1 \bmod 6}}\log p&=\theta(1.62\sqrt[3]{m};6,1)-\theta(1.54\sqrt[3]{m};6,1)\notag\\
    &\geqslant (0.81\sqrt[3]{m}-0.0036936\sqrt[3]{m})-(0.77\sqrt[3]{m}+0.0035112\sqrt[3]{m})\notag\\
    & = 0.0327952\sqrt[3]{m} > 0.\label{lower bound > 0}
\end{align}
Therefore, there must exist a prime $p\equiv 1\bmod 6$ in the range $[1.54\sqrt[3]{m}, 1.62\sqrt[3]{m}]$. Thus we conclude that any $m\geqslant \num{9.6446e35}$, and subsequently any $m \geqslant 10^{36}$, can be written as a sum of 8 icosahedral numbers. 
\end{proof}
Theorem \ref{Theorem: Pollock Icosahedral} now follows from the bound for $m$ in Theorem \ref{Theorem: 8 Icosahedral Numbers} along with numerical verification, e.g., see details in \cite{github}.
\subsection{Proof of Theorems \ref{Theorem: 8 Dodecahedral Numbers} and \ref{Theorem: Pollock Dodecahedral}}
The proofs of Theorem \ref{Theorem: 8 Dodecahedral Numbers} and \ref{Theorem: Pollock Dodecahedral} follow similar arguments as in the icosahedral case. We allow ourselves to reuse notations from the icosahedral case. Let $g(x) = \frac{9x^3}{2}-\frac{9x^2}{2}+x$ and write
\begin{align}\label{Six dodecahedral Numbers}
\mathcal{S}({6,\mathcal{D}}) &= g(x + a + 1) +g(x - a) + g(x+b+1) + g(x- b) + g(x + c + 1) + g(x - c)\notag\\
&=\frac{3}{2}(18x^{3} + 9x^{2} + 13x + 2) + (54x + 9)\left( \frac{c(c+1)}{2} + \frac{b(b+1)}{2} + \frac{a(a+1)}{2} \right).
\end{align}
Here $a,b,c <x$. Again, we aim to show that for any large and fixed $m$, 
\begin{align}\label{Eight dodecahedral Numbers}
\mathcal{S}(6,\mathcal{D}) + g(\ell) + g(r)= m,
\end{align}
where we will later choose $\ell$ and $r$ to be suitable positive integers. Using Gauss' theorem on triangular numbers, our problem is reduced to finding $x,\ell,r\in\Z^+$ satisfying the conditions:
\begin{align*}
   &1 \quad 0< L - g(\ell) - g(r) < 81x^3,\\
   &2. \quad (54x+9)\mid (L - g(\ell) - g(r)),
\end{align*}
where $L = m - \frac{3}{2}(18x^{3} + 9x^{2} + 13x + 2) \in\Z^+$. We will refer to the above conditions as Condition 1 and Condition 2, respectively. 
\par
Fix $t,\varepsilon \in \R^{+}$. Our goal is to find a prime $p$ such that $p \equiv 1 \bmod 6$ and 
\begin{align}
    (t - \varepsilon)p^{3} \leqslant m \leqslant (t + \varepsilon)p^{3}.\label{m bound dode}
\end{align}
If such $p$ is guaranteed, then we let $x=\frac{p-1}{6}$. Condition 1 is satisfied if
\begin{align}
  L - \frac{3}{8}p^{3} < g(\ell) + g(r) < L.\label{alternative form of condition 1 dode}  
\end{align}
Let $\alpha\in\R^+$ such that $r, \ell \in \left( (\alpha - \varepsilon) p, (\alpha + \varepsilon) p\right)$. Similar to the icosahedral case, we have 
\begin{align}
\alpha\in\bigg( \left(\frac{t}{9} - \frac{2\varepsilon}{9} - \frac{1}{18}\right)^{\frac{1}{3}}+\varepsilon,\left(\frac{t}{9} - \frac{1}{72} + \frac{\varepsilon}{9}\right)^{\frac{1}{3}}-\varepsilon\bigg).\label{alpha condition dode}
\end{align}
Now we turn to Condition 2. To this end, it suffices to find $\ell,r\in\mathbb{Z^+}$ satisfying
\begin{align}
    r,\ell &\in \left( (\alpha - \varepsilon) p, (\alpha + \varepsilon) p\right),\notag\\
    \ell &\equiv0\bmod9,\notag\\
    r&\equiv L\bmod 9,\notag\\
    \textrm{and} \quad  g(\ell)+g(r)&\equiv L\bmod p,\label{final conditions dode}
\end{align} 
where $\alpha$ satisfies \eqref{alpha condition dode}.
We will again prove an explicit version of a result involving Lehmer points due to Cobeli and the fourth author \cite[Theorem 1]{cobeli} to tackle this problem. Our set-up is as follows. Let $1 \leqslant L_{0} \leqslant 9$ with $L \equiv L_{0} \bmod 9$. Let $\mathcal{C}$ be the algebraic curve over $\mathbb{F}_{p}$ given by 
\begin{align}\label{Curve definition dode}
\mathcal{C}: g(x_1)+g(x_2)-L=0.
\end{align}
Define the vectors
\begin{align}
    \textbf{x}& = \begin{bmatrix}
           x_1 \\
           x_2 \\
         \end{bmatrix},
    \textbf{a} = \begin{bmatrix}
           9 \\
           9 \\
         \end{bmatrix},
    \textbf{b} = \begin{bmatrix}
           0 \\
           L_0 \\
         \end{bmatrix},
    \textbf{t}= \begin{bmatrix}
         t_1 \\
          t_2 \\
         \end{bmatrix},
         \textbf{g}= \begin{bmatrix}
         g_1 \\
          g_2 \\
         \end{bmatrix}, \label{vectors dode}
\end{align}
The set of Lehmer points $\mathcal{L}(p,\mathcal{C},\textbf{a},\textbf{b})$ with respect to $p$, the curve $\mathcal{C}$, and the vectors $\textbf{a},\textbf{b}$ is
\begin{align}\label{Lehmer Point Definition dode}
\mathcal{L}(p,\mathcal{C},\textbf{a},\textbf{b}) &:= \#\{\textbf{x}: x_1 \equiv 0\bmod 9, x_2 \equiv L_0 \bmod 9, (x_1,x_2)\in\mathcal{C}, 0\leqslant x_1, x_2<p\} . 
\end{align}
For $p$ prime and the vectors $\textbf{t}$ and $\textbf{g}$, consider the intervals $U_j = ((t_j-g_j)p, (t_j+g_j)p)$ for $j \in \{1,2\}$. Then the distribution function of the Lehmer points is given by
\begin{align}\label{Lehmer Distribution function dode}
F(p,\mathcal{C},\textbf{a},\textbf{b};\textbf{g},\textbf{t}):=\#\{\textbf{x}\in \mathcal{L}(p,\mathcal{C},\textbf{a},\textbf{b}):  x_j\in U_j, 0\leqslant t_j\pm g_j\leqslant 1, j=1,2 \}.
\end{align}
With the above setup, we have the following lemma, which is analogous to Lemma \ref{ZC theorem}. We omit the proof here to avoid redundancy.
\begin{lem}\label{ZC theorem dode}
Let $p$ be a prime, and let $\mathcal{C}$, $\textnormal{\textbf{a}, \textbf{b}, \textbf{g}, \textbf{t}}$, and $ F$ be as in \eqref{Lehmer Distribution function dode}. For $p\geqslant 10^{10}$,
\begin{align*}
F(p,\mathcal{C},\textnormal{\textbf{a}, \textbf{b}; \textbf{g}, \textbf{t}}) = \frac{4g_1g_2}{81}(p+1)+ \mathcal{E},
\end{align*}
where $|\mathcal{E}| \leqslant 6.0003\sqrt{p}\log^2 p$.
\end{lem}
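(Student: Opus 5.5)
We follow the proof of Lemma \ref{ZC theorem} essentially verbatim, replacing the modulus $5$ by $9$ and the icosahedral polynomial by $g(x)=\frac{9x^3}{2}-\frac{9x^2}{2}+x$. First we write
\[
F(p,\mathcal{C},\mathbf{a},\mathbf{b};\mathbf{g},\mathbf{t})=\sum_{\mathbf{x}\in\mathcal{C}}\chi_{\mathcal{Q}(a_1,b_1,g_1,t_1)}(x_1)\,\chi_{\mathcal{Q}(a_2,b_2,g_2,t_2)}(x_2),
\]
where $a_1=a_2=9$, $b_1=0$ and $b_2=L_0\leqslant 9$. We then expand each indicator in additive characters of $\mathbb{F}_p$, giving
\[
F=\sum_{\mathbf{z}\in\mathbb{F}_p^2}\hat\chi_1(z_1)\hat\chi_2(z_2)\,S(\mathbf{z},p,\mathcal{C}).
\]
The Fourier coefficients are computed as before. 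At $z=0$ we have $\hat\chi_j(0)=\frac{2g_j}{9}+O\big(\frac{(4+2b_j)g_j}{p}\big)$. For $z\neq 0$, summing the geometric progression along the progression $x\equiv b_j \bmod 9$ gives $|\hat\chi_j(z)|\leqslant\min\{m_j/p,\,1/(2|u_j|)\}$, where $u_j\equiv 9z \bmod p$ with $|u_j|\leqslant (p-1)/2$. This requires $p\nmid 9$, which holds since $p\geqslant 10^{10}$.

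\textbf{Main term.} This comes from $\mathbf{z}=\mathbf{0}$ and equals $|\mathcal{C}|\hat\chi_1(0)\hat\chi_2(0)$. We need $|\mathcal{C}|=p+1+E$ with $|E|\leqslant 2\sqrt p$, so the analogue of Lemma \ref{elliptic curve satisfied} for the dodecahedral cubic is required. We redo the change of variables: divide by $9/2$, set $R_1=x_1+x_2-\frac23$ and $R_2=x_1-x_2$, which gives a cubic of the shape $R_1^3+3R_1R_2^2+cR_1+B_0'$. Then we pass to Weierstrass form. In the degenerate cases the curve is either a line times a nonsingular conic (take the conic part, for which $E=0$) or a nodal cubic with $p+2$ points; otherwise Hasse--Weil applies. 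In all cases the curve never splits completely into lines, since the coefficient $c$ (the analogue of $\frac{4}{15}$, now built from $\frac{2}{9}$) is nonzero mod $p$. Expanding the product as in \eqref{Z-C theorem bound 1}, with $L_0\leqslant 9$ and $g_j\leqslant\frac12$, bounds the deviation from $\frac{4g_1g_2}{81}(p+1)$ by $\frac{8}{81}\sqrt p+O(1)$, which is at most $\frac{2}{81}\cdot 4\sqrt p+3$ for $p\geqslant 10^{10}$.

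\textbf{Error term.} For $\mathbf{z}\neq\mathbf{0}$, the linear form $z_1x_1+z_2x_2$ is nonconstant on every component of $\mathcal{C}$, because no component is a line of the relevant shape. Bombieri's bound with $d_1=3$, $d_2=1$ then gives $|S(\mathbf{z},p,\mathcal{C})|\leqslant 6\sqrt p+9\leqslant 6.0001\sqrt p$. Summing $\min\{\frac2p+\frac19,\frac1{2|u_j|}\}$ over $u_j$ in $\{-\frac{p-1}{2},\dots,\frac{p-1}{2}\}$, each factor is at most $\log p$, so this contribution is at most $6.0001\sqrt p\log^2p$. Adding the main-term discrepancy gives the total $6.0003\sqrt p\log^2 p$.

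\textbf{Expected obstacle.} The delicate step is the curve analysis. We must verify, for the dodecahedral polynomial, that the discriminant computation gives only the nodal or reducible degenerations, and in particular that $\mathcal{C}$ has no linear component along which a nonzero linear form could be constant. Carrying out this analogue of Lemma \ref{elliptic curve satisfied} is where the specific coefficients matter. The analytic bookkeeping is routine and in fact slightly easier than in Lemma \ref{ZC theorem}, because $\frac{4}{81}<\frac{4}{25}$.
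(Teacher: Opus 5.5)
Your proposal is correct and is the argument the paper intends. The paper omits this proof as a direct analogue of Lemma~\ref{ZC theorem} with the modulus $5$ replaced by $9$: additive-character expansion of the box indicators along progressions mod $9$, Hasse--Weil for the $\mathbf{z}=\mathbf{0}$ term after discarding the line component in the degenerate case, and Bombieri's bound for $\mathbf{z}\neq\mathbf{0}$.

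Your curve check goes through: the reduced equation is $R_1^3+3R_1R_2^2-\frac{4}{9}R_1-\frac{8L}{9}\equiv 0 \bmod p$. So the only reducible case is $L\equiv 0\bmod p$, giving the line $x_1+x_2\equiv\frac23$ times a nonsingular conic. The per-factor bound $\frac19+\frac2p+\sum_{0<|u|\leqslant (p-1)/2}\frac{1}{2|u|}\leqslant \log p$ also holds, narrowly, for $p\geqslant 10^{10}$.
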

\begin{rem}
    Similar to the icosahedral case, if $\mathcal{C}$ is reducible over $\mathbb{F}_p$ as a union of a line and a conic, then take $\mathcal{C}$ in the statement of Lemma \ref{ZC theorem dode} to be the conic part. Again, $\mathcal{C}$ can never be factored as a product of linear factors. 
\end{rem}
We now present the proof of Theorem \ref{Theorem: 8 Dodecahedral Numbers}.
\begin{proof}
We will actually show that any $m\geqslant \num{5.04e38}$ is a sum of 8 dodecahedral numbers. We choose $t=0.32$ and $\varepsilon=0.17$. Choose $\alpha = 0.173$, which satisfies \eqref{alpha condition dode}. Using Lemma \ref{ZC theorem dode}, in order to have $F(p,\mathcal{C},\textbf{a},\textbf{b};\textbf{g}, \textbf{t})>0$, it suffices to have 
\begin{align}\label{Dode Requirement}
\varepsilon^2(p+1)>\frac{81}{4} (6.0003\sqrt{p}\log^2p).
\end{align}
When $m\geqslant \num{5.04e38}$, we have $p\geqslant \num{1.497e13}$ and therefore $0.17 > 11.023 p^{-1/4} \log p,$ which implies \eqref{Dode Requirement}. Therefore, the problem reduces to finding a prime $p\equiv 1\bmod 6$ such that 
\[\frac{1}{\sqrt[3]{0.49}}\sqrt[3]{m}\leqslant p \leqslant \frac{1}{\sqrt[3]{0.15}}\sqrt[3]{m}
\]
when $m\geqslant \num{5.04e38}$. We restrict the range of $p$ to be the sub-interval [$1.26\sqrt[3]{m}, 1.269\sqrt[3]{m}$]. This range of $p$, when $p\geqslant 272$, guarantees us Condition 1. Thus, the problem is further reduced to finding a prime $p\equiv 1\bmod 6$ in the range [$1.26\sqrt[3]{m}, 1.269\sqrt[3]{m}$] when $m\geqslant \num{5.04e38}$. Now we apply the result due to Ramaré and Rumely \cite[Theorem 1]{ramare1996} with $\varepsilon=0.002657$ to obtain 
\begin{align}
\sum\limits_{\substack{1.26\sqrt[3]{m}<p\leqslant 1.269\sqrt[3]{m} \\ p \equiv 1 \bmod 6}}\log p \geqslant 0.0011402235\sqrt[3]{m} > 0.\label{lower bound > 0 dode}
\end{align}
Therefore there must exist a prime $p\equiv 1\bmod 6$ in the range [$1.26\sqrt[3]{m}, 1.269\sqrt[3]{m}$]. Hence we conclude that any $m\geqslant \num{5.04e38}$, and subsequently any $m \geqslant 10^{39}$, can be written as a sum of 8 dodecahedral numbers. 
\end{proof}

The proof of Theorem \ref{Theorem: Pollock Dodecahedral} follows similarly to that of Theorem \ref{Theorem: Pollock Icosahedral}, see details in \cite{github}.

\section*{Acknowledgements}
The authors are very grateful to the anonymous referee for their valuable suggestions that have greatly increased the clarity and value of the manuscript. Debmalya Basak is partially supported by the Juliette Alexandra Trjitzinsky Fellowship and the Bateman Fellowship, Department of Mathematics, University of
Illinois Urbana-Champaign.

\printbibliography

@article{deshouillers20007373170279850,
  title={7373170279850},
  author={Deshouillers, J.-M. and Hennecart, F. and Landreau, B. and Purnaba, I. G. P.},
  journal={Math. Comp},
  volume={69},
  number={229},
  pages={421--439},
  year={2000}
}

@article{romani1982computations,
  title={Computations concerning Waring's problem for cubes},
  author={Romani, F.},
  journal={Calcolo},
  volume={19},
  number={4},
  pages={415--431},
  year={1982},
  publisher={Springer}
}

@article{elkies2010every,
  title={Every even number greater than 454 is the sum of seven cubes},
  author={Elkies, N. D.},
  journal={arXiv preprint arXiv:1009.3983},
  year={2010}
}

@article{ramare2005explicit,
  title={An explicit seven cube theorem},
  author={Ramaré, O.},
  journal={Acta Arith.},
  volume={118},
  number={4},
  pages={375},
  year={2005},
  publisher={The Institute of Mathematics}
}

@book{maillet1895decomposition,
  title={On the decomposition of an integer into a sum of cubes of positive integers},
  author={Maillet, E.},
  year={1895},
  publisher={To the Secretariat of the Association}
}

@article{mccurley1984effective,
  title={An effective seven cube theorem},
  author={McCurley, K. S.},
  journal={J. Number Theory},
  volume={19},
  number={2},
  pages={176--183},
  year={1984},
  publisher={Elsevier}
}

@article{boklan2009every,
  title={Every multiple of 4 except 212, 364, 420, and 428 is the sum of seven cubes},
  author={Boklan, K. D. and Elkies, N. D.},
  journal={arXiv preprint arXiv:0903.4503},
  year={2009}
}

@article{linnik1943representation,
  title={On the representation of large numbers as sums of seven cubes},
  author={Linnik, Y. V.},
  journal={Mat. Sb.},
  volume={12},
  number={2},
  pages={218--224},
  year={1943},
  publisher={Russian Academy of Sciences, Mathematical Institute named after. VA Steklov}
}

@article{pollock,
 author = {Pollock, S. F.},
 journal = {Proc. Roy. Soc. London},
 title = {On the extension of the principle of Fermat’s theorem of the polygonal numbers to the higher
orders of series whose ultimate differences are constant.},
 year = {1843-1850},
volume = {5},
pages = {922–924}

}

@article{hassecongruentfunctions,
author = {Hasse, H.},
    title = {Beweis des analogons der riemannschen vermutung fr die
artinschen und f. k. schmidtschen kongruenzzetafunktionen in gewissen
elliptischen fllen. vorlufige mitteilung},
    journal = {Vorl. Mitt. Nachr.Ges. Wiss. G\"{o}ttingen},
    pages = {253-262},
    volume = {42},
    year = {1933}
}

@article{hassecomplexmultiplication,
    author = {Hasse, H.},
    title = {Abstrakte begr¨undung der komplexen multiplikation und
    riemannsche vermutung in funktionenk¨orpern},
    journal = {Abh. Math. Sem. Univ.},
    pages = {325-348},
    volume = {10(1)},
    year = {1934}
}

@article{hassefiniteorder,
    author = {Hasse, H.},
    title = {Zur Theorie der abstrakten elliptischen Funktionenk¨orper
    I,II,III. Die Struktur der Gruppe der Divisorenklassen endlicher Ordnung},
    journal = {J. Reine Angew. Math.},
    pages = {55--62},
    volume = {175},
    year = {1936}
}

@article{bombieri,
 URL = {http://www.jstor.org/stable/2373048},
 author = {Bombieri, E.},
 journal = {Amer. J. Math.},
 number = {1},
 pages = {71--105},
 publisher = {Johns Hopkins University Press},
 title = {On Exponential Sums in Finite Fields},
 urldate = {2024-05-23},
 volume = {88},
 year = {1966}
}

@article{Brady_2015,
   title={Sums of seven octahedral numbers},
   volume={93},
   url={http://dx.doi.org/10.1112/jlms/jdv061},
   DOI={10.1112/jlms/jdv061},
   number={1},
   journal={J. Lond. Math. Soc. (2)},
   publisher={Wiley},
   author={Brady, Z. E.},
   year={2015},
   pages={244–272} }

@article{cobeli,
author = {Cobeli, C. and Zaharescu, A.},
year = {2001},
pages = {301-307},
title = {Generalization of a problem of Lehmer},
volume = {104},
journal = {Manuscripta Math.},
doi = {10.1007/s002290170028}
}

@article{langweil,
 URL = {http://www.jstor.org/stable/2372655},
 author = {Lang, S. and Weil, A.},
 journal = {Amer. J. Math.},
 number = {4},
 pages = {819--827},
 publisher = {Johns Hopkins University Press},
 title = {Number of Points of Varieties in Finite Fields},
 urldate = {2024-03-22},
 volume = {76},
 year = {1954}
}

@article{weil1949,
    author = {Weil, A.},
    title = {Numbers of solutions of equations in finite fields},
    journal = {Bull. Amer. Math. Soc.},
    year = {1949},
    doi = {https://doi.org/10.1090%2FS0002-9904-1949-09219-4},
    volume = {55},
    pages = {497–508}
}

@article{hua1936,
 URL = {http://www.jstor.org/stable/2370973},
 author = {Hua, L.-K. },
 journal = {Amer. J. Math.},
 number = {3},
 pages = {553--562},
 publisher = {Johns Hopkins University Press},
 title = {On Waring's Problem with Polynomial Summands},
 urldate = {2024-05-23},
 volume = {58},
 year = {1936}
}

@article{huagenwaring,
author = {Hua, L.-K. },
title = {On a Generalized Waring Problem},
journal = {Proc. Lond. Math. Soc. (3)},
volume = {s2-43},
number = {1},
pages = {161-182},
doi = {https://doi.org/10.1112/plms/s2-43.3.161},
url = {https://londmathsoc.onlinelibrary.wiley.com/doi/abs/10.1112/plms/s2-43.3.161},
year = {1937}
}

@article{hua1940,
author = {Hua, L.-K. },
    title = {On Waring’s problem with cubic polynomial summands},
    journal = {J. Indian Math. Soc. (N.S.)},
    pages = {127-135},
    volume = {4},
    year = {1940}
}

@article{Kempner1912BemerkungenZW,
  title={Bemerkungen zum Waringschen Problem},
  author={Kempner, A. J.},
  journal={Math. Ann.},
  year={1912},
  volume={72},
  pages={387-399},
  url={https://api.semanticscholar.org/CorpusID:120101223}
}

@book{vaughan1997hardy,
  title={The Hardy-Littlewood method},
  author={Vaughan, R. C.},
  year={1997},
  publisher={Cambridge University Press}
}

@article{ramar2007explicit,
  title={An explicit result of the sum of seven cubes},
  author={Ramar{\'e}, O.},
  journal={Manuscripta Math.},
  volume={124},
  number={1},
  pages={59--75},
  year={2007},
  publisher={Springer-Verlag, Berlin/Heidelberg}
}

@article {Hardy-Ramanujan,
    AUTHOR = {Hardy, G. H. and Ramanujan, S.},
     TITLE = {Asymptotic {F}ormulaae in {C}ombinatory {A}nalysis},
   JOURNAL = {Proc. London Math. Soc. (2)},
  FJOURNAL = {Proceedings of the London Mathematical Society. Second Series},
    VOLUME = {17},
      YEAR = {1918},
     PAGES = {75--115},
      ISSN = {0024-6115},
   MRCLASS = {99-04},
  MRNUMBER = {1575586},
       DOI = {10.1112/plms/s2-17.1.75},
       URL = {https://doi.org/10.1112/plms/s2-17.1.75},
}

@misc{bbz2025,
  author = {D. Basak and B. Berndt and A. Zaharescu},
  title = {Distributions Associated to Small Quadratic Non-Residues and their Partitions},
  year = {2025},
  note = {\href{https://drive.google.com/file/d/1YsZjnFmMTqQx5Gdr95CrzwiduqiSDA83/view?usp=sharing}{URL}, submitted}
}

@article {LargeSieveInequality,
    AUTHOR = {Prakash, G. and Ramana, D. S.},
     TITLE = {The large sieve inequality for integer polynomial amplitudes},
   JOURNAL = {J. Number Theory},
  FJOURNAL = {Journal of Number Theory},
    VOLUME = {129},
      YEAR = {2009},
    NUMBER = {2},
     PAGES = {428--433},
      ISSN = {0022-314X,1096-1658},
   MRCLASS = {11N36},
  MRNUMBER = {2473889},
MRREVIEWER = {Zaizhao\ Meng},
       DOI = {10.1016/j.jnt.2008.05.006},
       URL = {https://doi.org/10.1016/j.jnt.2008.05.006},
}

@article {Ramachandra,
AUTHOR = {Ramachandra, K. and Sankaranarayanan, A.},
TITLE = {On an asymptotic formula of {S}rinivasa {R}amanujan}, JOURNAL = {Acta Arith.}, 
FJOURNAL = {Acta Arithmetica}, 
VOLUME = {109}, 
YEAR = {2003}, 
NUMBER = {4}, 
PAGES = {349--357}, 
ISSN = {0065-1036,1730-6264}, 
MRCLASS = {11N37 (11M06)}, 
MRNUMBER = {2009049}, 
MRREVIEWER = {D.\ R.\ Heath-Brown}, 
DOI = {10.4064/aa109-4-5}, 
URL = {https://doi.org/10.4064/aa109-4-5}, 
}

@article{nicolas1983majorations,
  title={Majorations explicites pour le nombre de diviseurs de N},
  author={Nicolas, J.-L. and Robin, G.},
  journal={Canad. Math. Bull.},
  volume={26},
  number={4},
  pages={485--492},
  year={1983},
  publisher={Cambridge University Press}
}

@article{ramare1996,
  title={Primes in Arithmetic Progressions},
  author={Ramaré, O. and Rumely, R.},
  journal={Math. Comp},
  volume={65},
  number={213},
  pages={397--425},
  year={1996},
  publisher={American Mathematical Society}
}

@article{siksek2016every,
  title={Every integer greater than 454 is the sum of at most seven positive cubes},
  author={Siksek, S.},
  journal={Algebra Number Theory},
  volume={10},
  number={10},
  pages={2093--2119},
  year={2016}
}

@article{watsoncube,
author = {Watson, G. L.},
title = {A Proof of the Seven Cube Theorem},
journal = {J. Lond. Math. Soc. (2)},
volume = {s1-26},
number = {2},
pages = {153-156},
doi = {https://doi.org/10.1112/jlms/s1-26.2.153},
url = {https://londmathsoc.onlinelibrary.wiley.com/doi/abs/10.1112/jlms/s1-26.2.153},
year = {1951}
}

@article{Wieferich1908BeweisDS,
  title={Beweis des Satzes, da{\ss} sich eine jede ganze Zahl als Summe von h{\"o}chstens neun positiven Kuben darstellen l{\"a}{\ss}t},
  author={Wieferich, A.},
  journal={Math. Ann},
  year={1908},
  volume={66},
  pages={95-101},
  url={https://api.semanticscholar.org/CorpusID:121386035}
}

@misc{dusart2010estimates,
      title={Estimates of Some Functions Over Primes without R.H.}, 
      author={Dusart, P.},
      year={2010},

}

@online{github,
  author = {Dong, A.},
  title = {Algorithm for Pollock's Icosahedral and Dodecahedral Numbers Conjectures},
  year = {2024},
  publisher = {GitHub},
  note = {Available at: \href{https://github.com/Anjisweety2/Algorithm-for-Pollock-s-icosahedral-and-dodecahedral-numbers-conjectures}{Computations for Pollock's Conjectures}}
}
\end{document}